\newcommand*{\mailto}[1]{\href{mailto:#1}{\nolinkurl{#1}}}
\newcommand{\arxiv}[1]{\href{http://arxiv.org/abs/#1}{arXiv:#1}}
\newcommand{\bbN}{{\mathbb{N}}}
\newcommand{\bbR}{{\mathbb{R}}}
\newcommand{\bbC}{{\mathbb{C}}}
\newcommand{\dC}{{\mathbb{C}}}
\newcommand{\dR}{{\mathbb{R}}}
\newcommand{\dN}{{\mathbb{N}}}
\newcommand{\cA}{{\mathcal A}}
\newcommand{\cB}{{\mathcal B}}
\newcommand{\cC}{{\mathcal C}}
\newcommand{\cD}{{\mathcal D}}
\newcommand{\cE}{{\mathcal E}}
\newcommand{\cG}{{\mathcal G}}
\newcommand{\cH}{{\mathcal H}}
\newcommand{\cK}{{\mathcal K}}
\newcommand{\cL}{{\mathcal L}}
\newcommand{\cM}{{\mathcal M}}
\newcommand{\cN}{{\mathcal N}}
\newcommand{\cO}{{\mathcal O}}
\newcommand{\cS}{{\mathcal S}}
\newcommand{\sH}{{\mathfrak H}}
\newcommand{\sS}{{\mathfrak S}}
\newcommand{\sN}{{\mathfrak N}}
\newcommand{\no}{\notag}
\newcommand{\lb}{\label}
\newcommand{\ol}{\overline}
\newcommand{\f}{\frac}
\def\senki{{\lbrack\negthinspace [\bot ]\negthinspace\rbrack}}
\def\senki+{{\lbrack\negthinspace [+] \negthinspace\rbrack}}
\renewcommand{\Re}{\mathop\mathrm{Re}}
\renewcommand{\Im}{\mathop\mathrm{Im}}
\renewcommand{\ge}{\geqslant}
\DeclareMathOperator{\dom}{dom}
\DeclareMathOperator{\ran}{ran}
\DeclareMathOperator{\tr}{tr}
\allowdisplaybreaks \numberwithin{equation}{section}
\newtheorem{theorem}{Theorem}[section]
\newtheorem{proposition}[theorem]{Proposition}
\newtheorem{lemma}[theorem]{Lemma}
\newtheorem{corollary}[theorem]{Corollary}
\newtheorem{definition}[theorem]{Definition}
\newtheorem{hypothesis}[theorem]{Hypothesis}
\newtheorem{example}[theorem]{Example}
\theoremstyle{remark}
\newtheorem{remark}[theorem]{Remark}
\begin{document}

\title[Spectral shift functions and Dirichlet-to-Neumann maps]{Spectral shift functions and Dirichlet-to-Neumann maps}

\author[J.\ Behrndt]{Jussi Behrndt}
\address{Institut f\"ur Numerische Mathematik, Technische Universit\"at
Graz, Steyrergasse 30, 8010 Graz, Austria}
\email{\mailto{behrndt@tugraz.at}}
\urladdr{\url{http://www.math.tugraz.at/~behrndt/}}

\author[F.\ Gesztesy]{Fritz Gesztesy} 
\address{Department of Mathematics
Baylor University, One Bear Place \#97328,
Waco, TX 76798-7328, USA}
\email{\mailto{Fritz\_Gesztesy@baylor.edu}}
\urladdr{\url{http://www.baylor.edu/math/index.php?id=935340}}

\author[S.\ Nakamura]{Shu Nakamura}
\address{Graduate School of Mathematical Sciences, University of Tokyo
3-8-1, Komaba, Meguro-ku, Tokyo, Japan 153-8914}
\email{\mailto{shu@ms.u-tokyo.ac.jp}}
\urladdr{\url{http://www.ms.u-tokyo.ac.jp/~shu/}}


\date{\today}

\subjclass[2010]{Primary 35J10, 35J15, 47A10, 47A40, 47B25; Secondary 35P20, 35P25, 47A55, 47B10, 47F05.}
\keywords{Symmetric operators, self-adjoint extensions, generalized resolvents, boundary triples, Titchmarsh--Weyl functions.}


\begin{abstract}
The spectral shift function of a pair of self-adjoint operators is expressed via an abstract operator valued Titchmarsh--Weyl $m$-function. 
This general result is applied to different self-adjoint realizations of 
second-order elliptic partial differential operators on smooth domains with compact boundaries, 
Schr\"{o}dinger operators with compactly supported potentials, and finally, Schr\"{o}dinger operators with singular potentials supported on hypersurfaces. In these applications
the spectral shift function is determined in an explicit form with the help of (energy parameter 
dependent) Dirichlet-to-Neumann maps. 
\end{abstract}

\maketitle

{\scriptsize{\tableofcontents}}

\section{Introduction}

Let $A$ and $B$ be self-adjoint operators in a separable Hilbert space $\sH$ and assume that the $m$-th powers of their resolvents differ by a trace class operator,
\begin{equation}\label{resabmmm}
\big[(B- z I_{\sH})^{-m}-(A- z I_{\sH})^{-m}\big]\in\sS_1(\sH),\quad z\in\rho(A)\cap\rho(B),
\end{equation}
for some odd integer $m\in\dN$. It is known that in this case there exists a real-valued function $\xi\in L^1_{\rm loc}(\dR)$ such that 
$\int_\dR \vert\xi(\lambda)\vert(1+\vert \lambda \vert)^{-(m+1)}d\lambda < \infty$ 
and the trace formula
\begin{equation}\label{trab}
 \tr_{\sH}\bigl(\varphi(B)-\varphi(A)\bigr)=\int_\dR\varphi^\prime(\lambda) \, \xi(\lambda)\,d\lambda
\end{equation}
holds for all suitable smooth functions $\varphi:\dR\rightarrow\dC$ such that 
$[\varphi(B)-\varphi(A)] \in\sS_1(\sH)$. 
The function $\xi$ in \eqref{trab} is called a {\it spectral shift function} of the pair $\{A,B\}$. Note that for 
$\varphi(\lambda)=(\lambda - z)^{-m}$ one has $[\varphi(B)-\varphi(A)]\in\sS_1(\sH)$ according to \eqref{resabmmm} and the trace formula \eqref{trab} takes the special form
\begin{equation*}
\tr_{\sH}\bigl((B- z I_{\sH})^{-m}-(A- z I_{\sH})^{-m}\bigr) 
= -m \int_\dR \frac{\xi(\lambda)\,d\lambda}{(\lambda - z)^{m+1}}.
\end{equation*}

Historically the trace formula \eqref{trab} was first proposed and verified on a formal level by I.\,M. Lifshitz for the case that $[B-A]$ is a finite-rank operator in \cite{L52},
and shortly afterwards in \cite{K53} M.\,G.~Krein proved \eqref{trab} rigorously in the more general case $[B-A]\in\sS_1(\sH)$ 
for all $C^1$-functions $\varphi$ with derivatives in the Wiener class. Furthermore, in \cite{K53} it was shown how 
the spectral shift function $\xi$ can be computed with the help of the perturbation determinant corresponding to the pair $\{A,B\}$. For pairs of unitary operators and thus via Cayley transforms for the case $m=1$ in
\eqref{resabmmm} the spectral shift function and the trace formula were obtained later by M.\,G.~Krein in \cite{K62}. 
Afterwards in \cite{K71} the more general case $m>0$ 
in \eqref{resabmmm} for self-adjoint operators $A$ and $B$ with $\rho(A)\cap\rho(B)\cap\dR\not=\emptyset$ was discussed by L.\,S.~Koplienko, and for odd integers $m$ in \eqref{resabmmm} 
and arbitrary self-adjoint operators $A$ and $B$ see \cite{Y05} by D.\,R.~Yafaev or \cite[Chapter 8, $\S$11]{Y92} and \cite[Chapter 0, Theorem 9.4]{Y10}.
For more details on the history, development and multifaceted applications
of the spectral shift function in mathematical analysis we refer the reader to the survey papers 
\cite{BP98,BY92,BY92-1}, the standard monographs \cite{Y92,Y10}, and, for instance, to 
\cite{BS75,C76,GHS95,GMN99,GM00,GS96,K87,KY81,P97,S98} and the more recent contributions \cite{ACDS09,GLMST11,GN12,HM10,HKNSV06,KKN13,KT09,MN14,MN15,PSS13,PSS14,PS12,P08,Y07}.

The main objective of the present paper is to prove a representation formula for the spectral shift function in terms of an abstract Titchmarsh--Weyl $m$-function 
of two self-adjoint operators satisfying the condition \eqref{resabmmm}, and to apply this result to different self-adjoint realizations of
second-order elliptic PDEs and Schr\"{o}dinger operators with compactly supported potentials and singular $\delta$-type potentials supported on compact hypersurfaces. In these applications 
the abstract Titchmarsh--Weyl $m$-function will turn out to be the energy dependent Neumann-to-Dirichlet map or Dirichlet-to-Neumann 
map associated to the elliptic differential expression
and the Schr\"{o}dinger operators on the interior and exterior domains, respectively.

More precisely, assume that $A$ and $B$ are self-adjoint operators in a separable Hilbert space $\sH$ and consider the underlying
closed symmetric operator 
\begin{equation*}
 Sf:=Af=Bf,\quad\dom(S):=\bigl\{f\in\dom(A)\cap\dom(B) \, \big| \, Af=Bf\bigr\},
\end{equation*}
which for convenience we assume is densely defined (our extension theory approach would easily generalize 
to the non-densely defined setting and
even to the case
that the domain of $S$ is trivial; however, in order to avoid adjoints of non-densely defined operators we restrict ourselves to the densely defined case here). 
We emphasize that neither $A$ nor $B$ needs to be semibounded in our approach. However, we first impose an implicit sign condition on the perturbation by assuming
\begin{equation}\label{signi}
 (A-\mu_0 I_{\sH})^{-1}\geq (B-\mu_0 I_{\sH})^{-1}
\end{equation}
for some $\mu_0 \in\rho(A)\cap\rho(B)\cap\dR$; in the semibounded case the condition \eqref{signi} is equivalent to $A\leq B$ 
interpreted in the sense of the corresponding quadratic forms. 
We then make use of the
concept of quasi boundary triples in extension theory of symmetric operators from \cite{BL07,BL12} and construct an operator $T$ such that $\overline T=S^*$ and
two boundary mappings $\Gamma_0,\Gamma_1:\dom(T)\rightarrow\cG$, where $\cG$ is an auxiliary Hilbert space, such that 
\begin{equation}
 A=T\upharpoonright\ker(\Gamma_0)\, \text{ and } \, B=T\upharpoonright\ker(\Gamma_1);
\end{equation}
see Proposition~\ref{haha} and Section~\ref{section2} for more details.
To such a quasi boundary triple $\{\cG,\Gamma_0,\Gamma_1\}$ one associates the $\gamma$-field and Weyl function (or abstract Titchmarsh--Weyl $m$-function) $M$ which are defined by 
\begin{equation*}
\gamma(z)\Gamma_0 f_z = f_z \, \text{ and }\, M(z)\Gamma_0 f_z=\Gamma_1 f_z, 
\quad f_z \in \ker(T- z I_{\sH}), \; z\in\rho(A),
\end{equation*}
respectively.
Very roughly speaking the values $M(z)$, $z \in \rho(A)$, of the function $M$ map abstract Dirichlet boundary values 
to abstract Neumann boundary values, or vice versa, 
and hence the Weyl function $M$ associated to a quasi boundary triple can be viewed as an abstract analog of the (energy parameter dependent) Dirichlet-to-Neumann map. 
The resolvents of $A$ and $B$ are related with the $\gamma$-field and Weyl function
via the useful Krein-type formula
\begin{equation*}
 (B- z I_{\sH})^{-1}-(A- z I_{\sH})^{-1} = - \gamma(z) M(z)^{-1} \gamma({\ol z})^*, 
 \quad z\in\rho(A)\cap\rho(B).
\end{equation*}
In our main result, Theorem~\ref{mainssf2},  
in the abstract part of this paper we provide sufficient $\sS_p$-type conditions on the $\gamma$-field and Weyl function of the quasi boundary triple $\{\cG,\Gamma_0,\Gamma_1\}$ such that 
\eqref{resabmmm} is satisfied with $m=2k+1$ and conclude that for any orthonormal basis $(\varphi_j)_{j \in J}$ in $\cG$ (with $J \subseteq \bbN$ an appropriate index set), the function
\begin{equation}\label{ssfi}
 \xi(\lambda)=\sum_{j \in J} \lim_{\varepsilon\downarrow 0}\pi^{-1} 
 \bigl(\Im \big(\log \big(\overline{M(\lambda +i\varepsilon)}\big)\big)\varphi_j,\varphi_j\bigr)_\cG 
 \, \text{ for a.e.~$\lambda\in\dR$,} 
  \end{equation}
is a spectral shift function for the pair $\{A,B\}$ such that $\xi(\lambda)=0$ in an open neighborhood 
of $\mu_0$. In particular, the trace formula
\begin{equation*}
 \tr_{\sH}\bigl( (B- z I_{\sH})^{-(2k+1)} - (A- z I_{\sH})^{-(2k+1)}\bigr) 
 = - (2k+1) \int_\dR \frac{\xi(\lambda)\,d\lambda}{(\lambda - z)^{2k+2}}
\end{equation*}
is valid for all $z \in\rho(A)\cap\rho(B)$. Furthermore, if \eqref{resabmmm} is satisfied with $m=1$ then according to Corollary~\ref{mainthmcorchen} the 
imaginary part of the logarithm of $ z \mapsto\overline{M(z)}$ is a trace class valued Nevanlinna (or Riesz--Herglotz) function 
on the open upper half-plane $\dC_+$ (and hence admits nontangential limits for a.e.~$\lambda \in\dR$ 
from $\dC_+$ in the trace class topology), and the spectral shift function in \eqref{ssfi} has the form
\begin{equation}\label{ssfi2}
 \xi(\lambda)=\lim_{\varepsilon\downarrow 0}\pi^{-1} \tr_{\cG}
 \bigl(\Im \big(\log \big(\overline{M(\lambda +i\varepsilon)}\big)\big)\bigr) 
 \, \text{ for a.e.~$\lambda\in\dR$.} 
  \end{equation} 
 Since $z \mapsto\log \big(\overline{M(z)}\big)$ is a Nevanlinna function it follows that the values of the spectral shift function $\xi$ in \eqref{ssfi} and \eqref{ssfi2}
are nonnegative for a.e.~$\lambda\in\dR$; this is rooted in the sign condition \eqref{signi}. In a second step we weaken the sign condition \eqref{signi} and extend
our representation of the spectral shift function to more general perturbations in the end of Section~\ref{ssfsec} (cf.\ \eqref{ssfab}). 
We point out that the key difficulty in the proof of \eqref{ssfi} and \eqref{ssfi2} is to ensure the existence of the limits on the right hand side of \eqref{ssfi}
and the 
trace class property of the function $\Im\big(\log\big(\overline M\big)\big)$ in the case $k=0$, respectively, which
are indispensable for \eqref{ssfi} and \eqref{ssfi2}. These problems are investigated
separately in Section \ref{logsec} on the logarithm of operator-valued Nevanlinna functions, where special attention is paid to the analytic continuation by reflection 
with respect to open subsets of the real line (cf.\ Theorem~\ref{nthm} and Proposition~\ref{nprop}),
which can be viewed as extensions of some results in \cite[Section 2]{GMN99}. 
We also mention that for the special case where \eqref{resabmmm} is a rank one or 
finite-rank operator and $m=1$, our representation for 
the spectral shift function coincides with one in \cite{BMN08,LSY01}. Furthermore, for $m=1$ in \eqref{resabmmm} a formula for the spectral shift 
function via a perturbation determinant involving boundary parameters and the Weyl function in the context of ordinary boundary triples was shown recently in 
\cite{MN15} (see also \cite{MN14}). We remark that our abstract result can also be formulated and remains valid in the special situation that the 
quasi boundary triple $\{\cG,\Gamma_0,\Gamma_1\}$ is a generalized or ordinary boundary triple in the sense of \cite{BGP08,DHMS06,DM91,DM95,GG91}.

Our main reason to provide the general result in Section~\ref{ssfsec} for the spectral shift function in terms of the abstract notion of quasi boundary triples and their 
Weyl functions is its convenient applicability to various PDE situations, see also \cite{BL07,BL12,BLLLP10,BLL13-AHP,BLL13-IEOT,BLL13-3,BLL-Exner,BMN15}
for other related applications of quasi boundary triples in PDE problems. In Section~\ref{ap1sec} we consider 
a formally
symmetric uniformly elliptic second-order partial differential expression $\cL$ with smooth coefficients on a bounded or unbounded domain in $\dR^n$, $n\geq 2$, with compact boundary,
and 
two self-adjoint realizations 
$A_{\beta_0}$ and $A_{\beta_1}$ of $\cL$ subject to
Robin boundary conditions $\beta_p\gamma_D f=\gamma_N f$, where $\gamma_D$ and $\gamma_N$ denote the Dirichlet and Neumann trace operators, and 
$\beta_p\in C^1(\partial\Omega)$, $p=0,1$, are real-valued functions. 
It then turns out that the Robin realizations $A_{\beta_0}$ and $A_{\beta_1}$ satisfy
\begin{equation}\label{dfggfd}
\big[(A_{\beta_1}- z I_{L^2(\partial\Omega)})^{-(2k+1)} 
 - (A_{\beta_0}- z I_{L^2(\partial\Omega)})^{-(2k+1)}\big] \in\sS_1\bigl(L^2(\Omega)\bigr)
\end{equation}
for all $k \in \bbN_0$, $k\geq (n-3)/4$, and $z \in\rho(A_{\beta_0})\cap\rho(A_{\beta_1})$, and for any orthonormal basis $(\varphi_j)_{j \in J}$ in 
$L^2(\partial\Omega)$, the function 
\begin{align}\label{xihi}
   \xi(\lambda)=\sum_{j \in J} \lim_{\varepsilon\downarrow 0}\pi^{-1} \Bigl(\bigl(\Im\bigl(\log (\cM_1(\lambda+i\varepsilon))-\log (\cM_0(t+i\varepsilon))\bigr)\bigr)\varphi_j,
   \varphi_j\Bigr)_{L^2(\partial\Omega)}& \\
   \, \text{for a.e.~$\lambda \in\dR$,}& \nonumber
\end{align}
 is a spectral shift function for the pair $\{A_{\beta_0},A_{\beta_1}\}$, where 
\begin{equation*}
 \cM_p(z)= (\beta-\beta_p)^{-1}\bigl(\beta_p\overline{\cN(z)}-I_{L^2(\partial\Omega)}\bigr)\bigl(\beta\overline{\cN(z)}-I_{L^2(\partial\Omega)}\bigr)^{-1},
 \quad z \in\dC\backslash\dR,
\end{equation*}
$\beta\in\dR$ is such that $\beta_p(x)<\beta$ for all $x\in\partial\Omega$, and
$\cN(z)$ denotes the ($z$-dependent) Neumann-to-Dirichlet map that assigns Neumann boundary values of solutions $f_z \in H^2(\Omega)$ of $\cL f_z = z f_z$, $z \in \dC\backslash\dR$, onto 
their Dirichlet boundary values. 
We note that the trace class property \eqref{dfggfd} was shown in \cite{BLLLP10,G11} for the case $k=0$ and in \cite{BLL13-3}
for $k\geq 1$. Moreover, in the case $k=0$, that is, $n=2$ or $n=3$, it follows from \eqref{ssfi2} that the spectral shift function in \eqref{xihi} has the form
\begin{equation*}
   \xi(\lambda)=\lim_{\varepsilon\downarrow 0}\pi^{-1} \tr_{L^2(\partial\Omega)}\left(\Im\bigl(\log (\cM_1(\lambda+i\varepsilon))-\log (\cM_0(t+i\varepsilon))\bigr)\right)
   \, \text{   for a.e.~$\lambda \in\dR$.}
  \end{equation*}

In our second example, presented in Section \ref{ap11sec}, we consider a Schr\"{o}dinger operator $B = - \Delta+V$  with a compactly supported potential $V\in L^\infty(\dR^n)$. Here we split the Euclidean space
$\dR^n$ and the Schr\"{o}dinger operator
via a multi-dimensional Glazman decomposition and consider the orthogonal sum $B_D=B_+\oplus C$ of the Dirichlet realizations of $-\Delta+V$ in $L^2(\cB_+)$
and $L^2(\cB_-)$, where $\cB_+$ is a sufficiently large ball which contains $\text{\rm supp}\,(V)$ and $\cB_-:=\dR^n\backslash\overline\cB_+$.
Similarly, the unperturbed operator $A=-\Delta$ is decoupled and compared with the orthogonal sum $A_D=A_+\oplus C$ of the Dirichlet realizations of $-\Delta$
in $L^2(\cB_+)$ and $L^2(\cB_-)$. Our abstract result applies to the pairs $\{B,B_D\}$ and $\{A,A_D\}$, whenever $k> (n-2)/4$, $n \in \bbN$, $n \geq 2$, and yields an 
explicit formula for their spectral shift functions $\xi_B$ and $\xi_A$ 
in terms of the ($z$-dependent) Dirichlet-to-Neumann maps associated to $-\Delta$ and $-\Delta+V$ on 
$\cB_+$ and $\cB_-$.
Since the spectra of the Dirichlet realizations $A_+=-\Delta$ and $B_+=-\Delta+V$ on the bounded domain $\cB_+$ are both discrete and bounded from below, the difference of their eigenvalue
counting functions is a spectral shift function $\xi_+$ for the pair $\{A_+,B_+\}$, and hence also for the pair $\{A_D,B_D\}$. Then it follows that
the function
\begin{equation*}
 \xi(\lambda)=\xi_A(\lambda)-\xi_B(\lambda)+\xi_+(\lambda)\, \text{ for a.e.~$\lambda\in\dR$,}
\end{equation*}
is a spectral shift function for the original pair $\{A,B\}$ (cf.\ Theorem~\ref{thmvv}). Some considerations in the first part of the proof of Theorem~\ref{thmvv} are related to \cite[Section 5.2]{BMN15},
where the scattering matrix of the pair $\{B,B_D\}$ in $\dR^2$ was expressed in terms of 
($z$-dependent) Dirichlet-to-Neumann maps. We also mention that the trace class property of the resolvent differences of $A$ and $A_D$,
and $B$ and $B_D$ goes back to M.\,Sh.~Birman \cite{B62} and G.~Grubb \cite{G84}, and that similar decoupling methods are often used in scattering theory,
see, for instance, \cite{DS76} or \cite{SZ91} for a slighty more abstract and general framework. 

As a third application, presented in Section \ref{ap2sec}, we consider the pair $\{H,H_{\delta,\alpha}\}$, where $H=-\Delta$ is the usual self-adjoint realization of the Laplacian
in $L^2(\dR^n)$, and $H_{\delta,\alpha}=-\Delta-\alpha\delta_\cC$ is a singular perturbation of $H$ by a $\delta$-potential of variable real-valued 
strength $\alpha\in C^1(\cC)$ supported on some
compact hypersurface $\cC$ that splits $\dR^n$, $n\geq 2$, into a bounded interior and an unbounded exterior domain. Schr\"{o}dinger operators with $\delta$-interactions are often used as idealized models 
of physical systems with short-range potentials; in the simplest case point interactions are considered, but in the last decades also 
interactions supported on curves and hypersurfaces have attracted a lot of attention, see the monographs \cite{AGHH05,AK99,EK15}, the review \cite{E08}, and, for instance, 
\cite{AKMN13,AGS87,BLL13-AHP,BLL-Exner,BEKS94,EI01,EK03,EK05,EY02} for a small 
selection of papers in this area. It will be shown in Theorem~\ref{dddthm2} that the trace class condition 
\begin{equation}\label{dfggfd2}
 \big[(H_{\delta,\alpha}- z I_{L^2(\bbR^n)}))^{-(2k+1)}-(H- z I_{L^2(\bbR^n)})^{-(2k+1)}\big] 
 \in\sS_1\bigl(L^2(\dR^n)\bigr)
\end{equation}
is satisfied for for all  $k \in \bbN_0$, $k\geq (n-3)/4$, and $z \in \dC\backslash [0,\infty)$, and in the special case $\alpha<0$ the function 
\begin{align*}
\xi(\lambda)=\sum_{j \in J}\lim_{\varepsilon\downarrow 0}\pi^{-1} \Bigl(\big(\Im \big(\log \big(\bigl(\overline{\cD_{\rm i}(\lambda+i\varepsilon)+\cD_{\rm e}(\lambda+i\varepsilon)}\bigr)^{-1} 
- \alpha^{-1}\big)\big)\big)\varphi_j,\varphi_j\Bigr)_{L^2(\cC)}& \\   
\text{ for a.e.~$\lambda \in\dR$,}&  
\end{align*}
is a spectral shift function for the pair $\{H,H_{\delta,\alpha}\}$ such that $\xi(\lambda)=0$ for $\lambda < 0$; here $\cD_{\rm i}(z)$ and $\cD_{\rm e}(z)$
denote the ($z$-dependent) Dirichlet-to-Neumann maps associated to $-\Delta$ on the interior and exterior domain, respectively, and $(\varphi_j)_{j \in J}$ 
is an orthonormal basis in $L^2(\cC)$.
For the case that no sign condition on $\alpha$ is assumed, a slightly more involved formula in the spirit of \eqref{xihi} for the spectral shift function is provided 
in Theorem~\ref{dddthm} and in Corollary~\ref{mainthmcorchen3} for the cases $n=2$ and $n=3$.
We mention that the trace class property \eqref{dfggfd2}
and the existence of the wave operators was already established in \cite{BLL13-AHP}, see also \cite{BLL-Exner}. 

The applications in Sections~\ref{ap1sec}, \ref{ap11sec}, and \ref{ap2sec} serve as typical examples for the abstract formalism and results in Section~\ref{ssfsec}. In this context
we mention that one may compare in a similar form as in Section~\ref{ap1sec} the Dirichlet realization with the Neumann, or other self-adjoint Robin realizations of
an elliptic partial differential expression, and that in principle also higher-order differential expressions with smooth coefficients could be considered. Similarly,
instead of the singularly perturbed Schr\"{o}dinger operator $H_{\delta,\alpha}$ in Section~\ref{ap2sec} one may compare the free Laplacian $H$ with orthogonal
couplings of Dirichlet, Neumann or Robin realizations on the interior and exterior domain. 
We refer the reader to \cite{GM08,GM11,G11a,G11b,G12,M10,MPS15,MPS16,P15} for some recent related contributions in this area. 

Finally, we briefly summarize the basic notation used in this paper: Let $\cG$, 
$\cH$, $\sH$, etc., be separable complex Hilbert spaces, $(\cdot,\cdot)_{\cH}$ the
scalar product in $\cH$ (linear in the first factor), and $I_{\cH}$ the identity operator
in $\cH$. If $T$ is a linear operator mapping (a subspace of\,) a
Hilbert space into another, $\dom(T)$ denotes the domain and $\ran(T)$ is the range 
of $T$. The closure
of a closable operator $S$ is denoted by $\ol S$. The spectrum and
resolvent set of a closed linear operator in $\cH$ will be denoted by
$\sigma(\cdot)$  and $\rho(\cdot)$, respectively.
The Banach spaces of bounded linear operators in $\cH$ are
denoted by $\cL(\cH)$; in the context of two
Hilbert spaces, $\cH_j$, $j=1,2$, we use the analogous abbreviation
$\cL(\cH_1, \cH_2)$. The $p$-th Schatten-von Neumann ideal consists of compact operators with singular values in $l^p$, $p>0$, and is denoted by $\sS_p(\cH)$ and $\sS_p(\cH_1,\cH_2)$.

For $\Omega \subseteq \bbR^n$ nonempty, $n \in \bbN$,  we suppress the 
$n$-dimensional Lebesgue measure $d^n x$ and use the shorthand notation 
$L^2(\Omega) := L^2(\Omega; d^n x)$; similarly, if $\partial \Omega$ is sufficiently regular we write $L^2(\partial \Omega) := L^2(\partial \Omega; d^{n-1} \sigma)$, with $d^{n-1} \sigma$ 
the surface measure on $\partial \Omega$. 
We also abbreviate $\bbC_{\pm} := \{z \in \bbC \, | \, \Im(z) \gtrless 0\}$ and 
$\bbN_0 = \bbN \cup \{0\}$.

\section{Quasi boundary triples and their Weyl functions}\label{section2}

In this preliminary section we recall the concept of quasi boundary triples and their Weyl functions from
extension theory of symmetric operators. We shall make use of these notions in Section~\ref{ssfsec} and formulate our main
abstract result Theorem~\ref{mainssf2} in terms of the Weyl function of a quasi boundary triple. 
In Sections~\ref{ap1sec}, \ref{ap11sec}, and \ref{ap2sec} quasi boundary triples and their Weyl functions are used to parametrize 
self-adjoint Schr\"{o}dinger operators and self-adjoint elliptic differential operators with suitable boundary conditions. 
We refer to \cite{BL07,BL12} for more details on quasi boundary triples and to \cite{BLLLP10,BLL13-AHP,BLL13-IEOT,BLL13-3,BLL-Exner,BMN15} for some
applications. 

Throughout this section let $\sH$ be a separable Hilbert space and let $S$ be a densely defined closed symmetric operator in $\sH$. 

\begin{definition}\label{qbtdefinition}
Let $T\subset S^*$ be a linear operator in $\sH$ such that $\overline T=S^*$.
A triple $\{\cG,\Gamma_0,\Gamma_1\}$ is said to be a {\em quasi boundary triple}
for $T\subset S^*$ if $\cG$ is a Hilbert space and $\Gamma_0,\Gamma_1:\dom (T)\rightarrow\cG$
are linear mappings such that the following conditions $(i)$--$(iii)$ are satisfied:
\begin{itemize}
  \item [$(i)$] The abstract Green's identity
    \begin{equation*}
      (Tf,g)_\sH-(f,Tg)_\sH=(\Gamma_1 f,\Gamma_0 g)_\cG-(\Gamma_0 f,\Gamma_1 g)_\cG
    \end{equation*}
    holds for all $f,g\in\dom (T)$. 
  \item [$(ii)$] The range of the map $(\Gamma_0,\Gamma_1)^\top:\dom(T)\rightarrow\cG\times\cG$ is dense. 
  \item [$(iii)$] The operator $A_0:=T\upharpoonright\ker(\Gamma_0)$ is self-adjoint in $\sH$.
\end{itemize}
\end{definition}

The notion of quasi boundary triples is a slight extension of the concepts of generalized and ordinary 
boundary triples (see \cite{BGP08,DHMS06,DM91,DM95,GG91,S12} for more details). 
We recall from \cite{BL07,BL12} that
a quasi boundary triple with the additional property that the map $(\Gamma_0,\Gamma_1)^\top:\dom(T)\rightarrow\cG\times\cG$ in Definition~\ref{qbtdefinition}\,$(ii)$ is onto, 
or equivalently, $T=S^*$, is automatically an {\em ordinary boundary triple}, that is,
$\Gamma_0,\Gamma_1:\dom (S^*)\rightarrow\cG$ are linear mappings such that the abstract Green's identity in Definition~\ref{qbtdefinition}\,$(i)$ holds 
for all $f,g \in \dom(S^*)$ and the map $(\Gamma_0,\Gamma_1)^\top$ in 
Definition~\ref{qbtdefinition}\,$(ii)$
is surjective. For an ordinary boundary triple item $(iii)$ in Definition~\ref{qbtdefinition} is automatically satisfied. 
In the proof of Proposition~\ref{haha} we shall also make use of ordinary boundary triples and some of their properties. 

The next theorem from \cite{BL07,BL12} is very useful in the applications in Sections~\ref{ap1sec}, \ref{ap11sec}, and \ref{ap2sec}; it contains a sufficient condition for a
triple $\{\cG,\Gamma_0,\Gamma_1\}$ to be a quasi boundary triple.

\begin{theorem}\label{ratemal}
Let $\sH$ and $\cG$ be separable Hilbert spaces and let $T$ be a linear operator in $\sH$.
Assume that $\Gamma_0,\Gamma_1: \dom (T)\rightarrow\cG$ are linear mappings such
that the following conditions $(i)$--$(iii)$ hold:
\begin{itemize}
\item[$(i)$] The abstract Green's identity
\begin{equation*}
 (Tf,g)_\sH-(f,Tg)_\sH=(\Gamma_1 f,\Gamma_0 g)_\cG-(\Gamma_0 f,\Gamma_1 g)_\cG
\end{equation*}
holds for all $f,g\in\dom(T)$. 
\item [$(ii)$]
The range of 
$(\Gamma_0,\Gamma_1)^\top: \dom (T)\rightarrow\cG\times\cG$
is dense and $\ker(\Gamma_0)\cap\ker(\Gamma_1)$ is dense in $\sH$. 
\item[$(iii)$] $T\upharpoonright \ker(\Gamma_0)$ is an extension of a self-adjoint
operator $A_0$.
\end{itemize}
Then $$S:= T\upharpoonright\bigl(\ker(\Gamma_0)\cap\ker(\Gamma_1)\bigr)$$ 
is a densely defined closed
symmetric operator in $\sH$ such that $\overline T= S^*$ holds and the triple $\{\cG,\Gamma_0,\Gamma_1\}$ is a
quasi boundary triple for $S^*$ with $A_0=T\upharpoonright \ker(\Gamma_0)$.
\end{theorem}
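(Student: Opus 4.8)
The plan is to argue in four steps.

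\emph{Step 1 (the operator $A_0$ and a decomposition of $\dom(T)$).} Applying the abstract Green's identity $(i)$ to $f,g\in\ker(\Gamma_0)$ makes its right-hand side vanish, so $T\!\upharpoonright\!\ker(\Gamma_0)$ is symmetric; by $(iii)$ it extends a self-adjoint operator, and since self-adjoint operators are maximally symmetric, $A_0:=T\!\upharpoonright\!\ker(\Gamma_0)$ is self-adjoint. In particular $\dom(A_0)$ is dense in $\sH$ and $\dC\setminus\dR\subseteq\rho(A_0)$. Fix $z\in\rho(A_0)$. Since $A_0\subseteq T$, one has $\dom(T)=\dom(A_0)\dotplus\ker(T-zI_\sH)$: for $f\in\dom(T)$ the element $f_0:=(A_0-zI_\sH)^{-1}(Tf-zf)\in\dom(A_0)$ satisfies $(T-zI_\sH)(f-f_0)=0$, and the sum is direct because $z\in\rho(A_0)$. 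The same computation, using $A_0\subseteq\overline T$ and $A_0=A_0^*\subseteq S^*$, gives $\dom(\overline T)=\dom(A_0)\dotplus\ker(\overline T-zI_\sH)$ and $\dom(S^*)=\dom(A_0)\dotplus\ker(S^*-zI_\sH)$. As $\Gamma_0$ vanishes on $\dom(A_0)$ and $\ker(T-zI_\sH)\cap\dom(A_0)=\{0\}$, the map $\Gamma_0\!\upharpoonright\!\ker(T-zI_\sH)$ is a bijection onto $\ran(\Gamma_0)$; write $\gamma(z)$ for its inverse. Finally, a short approximation argument using the boundedness of $(A_0-zI_\sH)^{-1}$ shows $\ker(\overline T-zI_\sH)=\overline{\ker(T-zI_\sH)}$.

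\emph{Step 2 (the inclusion $\overline T\subseteq S^*$ and the reduction).} Green's identity with $f\in\dom(T)$ and $h\in\dom(S)=\ker(\Gamma_0)\cap\ker(\Gamma_1)$ gives $(Th,f)_\sH=(h,Tf)_\sH$, hence $T\subseteq S^*$ and, $S^*$ being closed, $\overline T\subseteq S^*$; in particular $T$ is closable, $S$ is symmetric (again by Green's identity) and densely defined (by $(ii)$), and $S\subseteq A_0$ with $\dom(S)=\{u\in\dom(A_0):\Gamma_1u=0\}$. Combining this with Step 1, it suffices to prove $\ker(S^*-zI_\sH)\subseteq\overline{\ker(T-zI_\sH)}$: together with $\ker(T-zI_\sH)\subseteq\ker(S^*-zI_\sH)$ this yields $\ker(S^*-zI_\sH)=\overline{\ker(T-zI_\sH)}=\ker(\overline T-zI_\sH)$, and then the domain decompositions of $\overline T$ and $S^*$ coincide, whence $\overline T=S^*$.

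\emph{Step 3 (the key estimate).} This is the point I expect to be the main obstacle: one must control $\Gamma_1$ on $\dom(A_0)$. I would show that $\Gamma_1\!\upharpoonright\!\dom(A_0)$ is bounded with respect to the graph norm of $A_0$, by the closed graph theorem. Indeed, if $u_n\in\dom(A_0)$ with $u_n\to u$ and $A_0u_n\to A_0u$ in $\sH$ and $\Gamma_1u_n\to\psi$ in $\cG$, then for every $v\in\dom(T)$ Green's identity for $u_n,v$ reads $(A_0u_n,v)_\sH-(u_n,Tv)_\sH=(\Gamma_1u_n,\Gamma_0v)_\cG$ (since $\Gamma_0u_n=0$); passing to the limit and comparing with Green's identity for $u,v$ gives $(\psi-\Gamma_1u,\Gamma_0v)_\cG=0$ for all $v\in\dom(T)$, and since $\ran(\Gamma_0)$ is dense in $\cG$ (a consequence of $(ii)$) we get $\psi=\Gamma_1u$. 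Consequently $K:=\Gamma_1(A_0-\overline{z}I_\sH)^{-1}\in\cL(\sH,\cG)$. Green's identity yields two further facts: $\ran(S-\overline{z}I_\sH)=(A_0-\overline{z}I_\sH)\big(\{u\in\dom(A_0):\Gamma_1u=0\}\big)=\ker(K)$; and, for $\phi\in\ran(\Gamma_0)$ and $g\in\sH$, setting $u:=(A_0-\overline{z}I_\sH)^{-1}g$ and using $\Gamma_0u=0$, $\Gamma_0\gamma(z)\phi=\phi$, Green's identity for $u$ and $\gamma(z)\phi$ collapses to $(g,\gamma(z)\phi)_\sH=(Kg,\phi)_\cG$, i.e.\ $K^*\phi=\gamma(z)\phi$. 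Since $K^*\in\cL(\cG,\sH)$ and $\ran(\Gamma_0)$ is dense, $\gamma(z)$ extends to $K^*$, so $\ker(T-zI_\sH)\subseteq\ran(K^*)\subseteq\overline{\ker(T-zI_\sH)}$. Hence
\[
\ker(S^*-zI_\sH)=\ran(S-\overline{z}I_\sH)^{\perp}=\ker(K)^{\perp}=\overline{\ran(K^*)}=\overline{\ker(T-zI_\sH)},
\]
which is exactly the inclusion needed in Step 2; therefore $\overline T=S^*$.

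\emph{Step 4 (conclusion).} The operator $S=T\!\upharpoonright\!(\ker(\Gamma_0)\cap\ker(\Gamma_1))$ is closed: if $f_n\in\dom(S)$, $f_n\to f$ and $Sf_n=A_0f_n\to g$, then $f\in\dom(A_0)$ and $A_0f=g$ since $A_0$ is closed, while $f_n\to f$ in the graph norm of $A_0$ together with the boundedness of $\Gamma_1\!\upharpoonright\!\dom(A_0)$ from Step 3 forces $\Gamma_1f=\lim_n\Gamma_1f_n=0$; thus $f\in\dom(S)$ and $Sf=g$. Finally $\{\cG,\Gamma_0,\Gamma_1\}$ is a quasi boundary triple for $S^*=\overline T$ in the sense of Definition~\ref{qbtdefinition}: $(i)$ is hypothesis $(i)$, $(ii)$ is part of hypothesis $(ii)$, and $(iii)$ is the self-adjointness of $A_0=T\!\upharpoonright\!\ker(\Gamma_0)$ established in Step 1, which is also the claimed identification of $A_0$.
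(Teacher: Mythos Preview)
The paper does not prove Theorem~\ref{ratemal}; it is quoted from \cite{BL07,BL12} without argument. Your proof is correct and self-contained.

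The decisive step is your Step~3: establishing via the closed graph theorem that $K:=\Gamma_1(A_0-\bar zI_\sH)^{-1}\in\cL(\sH,\cG)$, and then using Green's identity to recognise $K^*$ as the bounded extension of $\gamma(z)$, so that
\[
\ker(S^*-zI_\sH)=\ran(S-\bar zI_\sH)^\perp=\ker(K)^\perp=\overline{\ran(K^*)}=\overline{\ker(T-zI_\sH)}=\ker(\overline T-zI_\sH).
\]
This is exactly the relation $\gamma(z)^*=\Gamma_1(A_0-\bar zI_\sH)^{-1}$ that the paper records later as \eqref{gstar} (there stated as a consequence of the quasi boundary triple structure rather than as a tool to establish it), and it is the same mechanism that drives the proofs in \cite{BL07,BL12}. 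Your Steps~1, 2, and 4 are routine once Step~3 is in place; in particular the direct closedness argument for $S$ in Step~4, using that $S\subseteq A_0$ and the graph-norm boundedness of $\Gamma_1\!\upharpoonright\!\dom(A_0)$, is clean and avoids any detour through $S^{**}$.
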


Next, we recall the definition of the $\gamma$-field $\gamma$ and Weyl function $M$ associated to a quasi boundary triple, which
is formally the same as in \cite{DM91,DM95} for the case of ordinary or generalized boundary triples. 
For this let $\{\cG,\Gamma_0,\Gamma_1\}$ be a quasi boundary triple for $T\subset S^*$ with $A_0=T\upharpoonright\ker(\Gamma_0)$.
We note that the direct sum decomposition
\begin{equation}\label{jkjk}
  \dom (T) = \dom (A_0)\,\dot +\,\ker(T - z I_{\sH})
  = \ker(\Gamma_0)\,\dot +\,\ker(T- z I_{\sH})
\end{equation}
of $\dom(T)$ holds for all $z \in \rho(A_0)$, and hence the mapping 
$\Gamma_0\upharpoonright \ker(T - z I_{\sH})$
is injective for all $z \in \rho(A_0)$ and its range coincides with $\ran(\Gamma_0)$.

\begin{definition}\label{gwdeffi}
Let $T\subset S^*$ be a linear operator in $\sH$ such that $\overline T=S^*$
and let $\{\cG,\Gamma_0,\Gamma_1\}$ be a quasi boundary triple for $T\subset S^*$ with $A_0=T\upharpoonright\ker(\Gamma_0)$.
The {\em $\gamma$-field} $\gamma$ and the {\em Weyl function} $M$ corresponding to $\{\cG,\Gamma_0,\Gamma_1\}$ are operator-valued functions on $\rho(A_0)$ which are defined by
\begin{equation*}
   z \mapsto\gamma(z):=\bigl(\Gamma_0\upharpoonright\ker(T - z I_{\sH})\bigr)^{-1} \, \text{ and } \,  z \mapsto  
  M(z) := \Gamma_1\bigl(\Gamma_0\upharpoonright\ker(T - z I_{\sH})\bigr)^{-1}.
\end{equation*}
\end{definition}

Various useful properties of the $\gamma$-field and Weyl function associated to a quasi boundary triple were provided in \cite{BL07,BL12}, see also \cite{BGP08,DHMS06,DM91,DM95,S12} for the special cases of ordinary and 
generalized boundary triples. In the following we briefly review some items which are  important for our purposes. We first note that by Definition~\ref{gwdeffi} 
the values $\gamma(z)$, $z \in \rho(A_0)$, 
of the $\gamma$-field are operators defined on 
the dense subspace $\ran(\Gamma_0)\subset\cG$ which map onto
$\ker(T - z I_{\sH})\subset\sH$. The operators $\gamma(z)$, $z \in \rho(A_0)$, are bounded and admit continuous extensions $\overline{\gamma(z)}\in\cL(\cG,\sH)$. 
For the adjoint operators $\gamma(z)^*\in\cL(\sH,\cG)$, $z \in \rho(A_0)$, it follows from the abstract Green's identity in Definition~\ref{qbtdefinition}\,$(i)$ that  
\begin{equation}\label{gstar}
 \gamma(z)^*=\Gamma_1(A_0-{\ol z} I_{\sH})^{-1},\quad z \in \rho(A_0),
\end{equation}
and, in particular, $\ran(\gamma(z)^*)=\ran(\Gamma_1\upharpoonright\dom (A_0))$ does not depend on $z \in \rho(A_0)$.
It is also important to note that $(\ran(\gamma(z)^*))^\bot=\ker(\gamma(z))=\{0\}$ and hence
\begin{equation}\label{gamden}
 \overline{\ran(\gamma(z)^*)}=\cG,\quad z \in \rho(A_0).
\end{equation}
In the same way as for ordinary boundary triples one verifies 
\begin{equation}\label{xxa}
 \gamma(z)\varphi=\bigl(I_\sH+(z - z_0)(A_0 - z I_{\sH})^{-1}\bigr)\gamma(z_0)\varphi, 
 \quad z, z_0 \in\rho(A_0),\,\,\,\varphi\in\ran(\Gamma_0),
\end{equation}
and therefore $ z \mapsto \gamma(z)\varphi$ is holomorphic on $\rho(A_0)$
for all $\varphi\in\ran(\Gamma_0)$. The relation \eqref{xxa} extends by continuity to
\begin{equation}\label{xxaa}
 \overline{\gamma(z)}=\bigl(I_\sH+(z - z_0)(A_0 - z I_{\sH})^{-1}\bigr)\overline{\gamma(z_0)} 
 \in\cL(\cG,\sH),\quad z, z_0 \in \rho(A_0),
\end{equation}
and it follows that $ z \mapsto\overline{\gamma(z)}$ is a holomorphic $\cL(\cG,\sH)$-valued operator function. According to \cite[Lemma 2.4]{BLL13-3}
the identities
\begin{equation}\label{gammad1}
 \frac{d^k}{dz^k}\overline{\gamma(z)}=k! \, (A_0 - z I_{\sH})^{-k}\overline{\gamma(z)}
\end{equation}
and 
\begin{equation}\label{gammad2}
 \frac{d^k}{dz^k}\gamma({\ol z})^*=k! \, \gamma({\ol z})^*(A_0 - z I_{\sH})^{-k}
\end{equation}
hold for all $k \in \bbN_0$ and $z \in \rho(A_0)$.

The values $M(z)$, $z \in \rho(A_0)$, of the Weyl
function $M$ associated to a quasi boundary triple are operators in $\cG$ and it follows from Definition~\ref{gwdeffi} that
\begin{equation*}
\dom (M(z)) = \ran(\Gamma_0) \quad \mbox{and} \quad \ran (M(z)) \subset \ran (\Gamma_1)
\end{equation*}
hold for all $z \in \rho(A_0)$. In particular, the operators $M(z)$, $z \in \rho(A_0)$, are densely defined in $\cG$. With the help of the abstract Green's identity 
in Definition~\ref{qbtdefinition}\,$(i)$ one concludes that
for $z, z_0 \in\rho(A_0)$ and $\varphi,\psi\in\ran(\Gamma_0)$ the Weyl function and the $\gamma$-field satisfy
\begin{equation}\label{olko}
 (M(z)\varphi,\psi)_\cG-(\varphi,M(z_0)\psi)_\cG=(z - \ol{z_0})\bigl(\gamma(z)\varphi,\gamma(z_0)\psi\bigr)_\cG
\end{equation}
and it follows that $M(z)\subset M({\ol z})^*$ and hence the operators $M(z)$ are closable for all 
$z\in\rho(A_0)$. 
It is important to note that the operators $M(z)$, $z \in \rho(A_0)$, and their closures are unbounded in general; the situation is different when the quasi boundary triple 
$\{\cG,\Gamma_0,\Gamma_1\}$ is a generalized or ordinary boundary triple.
From \eqref{olko} it also follows that
the Weyl function and the $\gamma$-field are connected via
\begin{equation}\label{gutgut}
 M(z)\varphi- M(z_0)^*\varphi =(z - \ol{z_0})\gamma(z_0)^*\gamma(z)\varphi, 
 \quad z, z_0 \in \rho(A_0), \; \varphi\in\ran(\Gamma_0).
\end{equation}
From \eqref{gutgut} and \eqref{xxa} one also obtains  
\begin{equation}\label{imm}
\Im(M(z))\varphi = \Im(z)\,\gamma(z)^*\gamma(z)\varphi,\quad z \in\rho(A_0), \; 
\varphi\in\ran(\Gamma_0),
\end{equation}
and
\begin{equation}\label{mform}
\begin{split}
 M(z)\varphi &=\Re(M(z_0))\varphi\\
 &\quad +\gamma(z_0)^*\bigl((z - \Re(z_0))+(z - z_0)(z - \ol{z_0})(A_0 - z I_{\sH})^{-1}\bigr)\gamma(z_0)\varphi
\end{split}
 \end{equation}
for all $z,z_0\in\rho(A_0)$ and $\varphi\in\ran(\Gamma_0)$. 
One observes that $ z \mapsto M(z)\varphi$ is holomorphic on $\rho(A_0)$ for all $\varphi\in\ran(\Gamma_0)$ and by \eqref{imm} the imaginary part of $M(z)$ is a bounded
operator in $\cG$ which admits a bounded continuation to
\begin{equation}\label{trf}
\overline{\Im (M(z))} = \Im(z)\,\gamma(z)^*\overline{\gamma(z)}\in\cL(\cG).
\end{equation}
Furthermore, the derivatives $\frac{d^k}{dz^k} M(z)$, $k \in \bbN$, of the Weyl function are densely defined bounded operators in $\cG$ and according to 
\cite[Lemma 2.4]{BLL13-3} their closures are given by
\begin{equation*}
 \overline{\frac{d^k}{dz^k} M(z)}=k! \, \gamma({\ol z})^*(A_0 - z I_{\sH})^{-(k-1)}\overline{\gamma(z)}, 
 \quad k \in \bbN, \; z \in \rho(A_0). 
\end{equation*}
In the situation where the values $M(z)$ are densely defined bounded operators for some,
and hence for all $z \in \rho(A_0)$ one has 
\begin{equation}\label{gammad3}
 \frac{d^k}{dz^k}\overline{M(z)}=k! \, \gamma({\ol z})^*(A_0 - z I_{\sH})^{-(k-1)}\overline{\gamma(z)}, 
  \quad k \in \bbN, \; z \in \rho(A_0). 
\end{equation}

The next result will be used in the formulation and proof of our abstract representation formula for the spectral shift function
in Section~\ref{ssfsec}. The statement on the existence of a quasi boundary triple follows also from \cite[Proposition 2.9\,$(i)$]{BMN15} and 
the Krein-type resolvent formula in \eqref{kreinab} is a special case of \cite[Corollary~6.17]{BL12} or \cite[Corollary~3.9]{BLL13-IEOT}. 
For convenience of the reader we provide a simple direct proof.

\begin{proposition}\label{haha}
 Let $A$ and $B$ be self-adjoint operators in $\sH$ and assume that the closed symmetric operator $S=A\cap B$ is densely defined. 
 Then the closure of the operator 
 \begin{equation}\label{trtrz}
 T=S^*\upharpoonright\bigl(\dom (A)+\dom (B)\bigr)
 \end{equation}
 coincides with $S^*$ and there exists a quasi boundary triple 
 $\{\cG,\Gamma_0,\Gamma_1\}$ for $T\subset S^*$ such that 
\begin{equation}\label{hohodfg}
 A=T\upharpoonright\ker(\Gamma_0)\, \text{ and } \, B=T\upharpoonright\ker(\Gamma_1).
\end{equation}
Furthermore, if $\gamma$ and $M$ are the corresponding $\gamma$-field and Weyl function then
\begin{equation}\label{kreinab}
 (B - z I_{\sH})^{-1}-(A - z I_{\sH})^{-1}=-\gamma(z) M(z)^{-1}\gamma({\ol z})^*, \quad 
z \in \rho(A)\cap\rho(B).
\end{equation}
\end{proposition}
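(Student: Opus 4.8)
The plan is to reduce to an ordinary boundary triple for $S^*$ adapted to $A$, to read off $B$ as a self-adjoint boundary parameter, and then to compress this data to the domain $\dom(A)+\dom(B)$. Since $S$ is a densely defined closed symmetric operator possessing the self-adjoint extension $A$, its deficiency indices coincide, and one may choose an ordinary boundary triple $\{\cG,\Upsilon_0,\Upsilon_1\}$ for $S^*$ normalized so that $A=S^*\upharpoonright\ker(\Upsilon_0)$; the existence of such a triple is part of the standard theory (see \cite{DM91,DM95,GG91}). Note also that $\dom(A)\cap\dom(B)=\dom(S)$ automatically: indeed $\dom(A),\dom(B)\subset\dom(S^*)$ with $S^*f=Af$ on $\dom(A)$ and $S^*f=Bf$ on $\dom(B)$, so $Af=Bf$ for every $f\in\dom(A)\cap\dom(B)$. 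The self-adjoint extension $B$ of $S$ corresponds to a self-adjoint linear relation $\Theta$ in $\cG$ with $B=S^*\upharpoonright\{f\in\dom(S^*)\,|\,\{\Upsilon_0 f,\Upsilon_1 f\}\in\Theta\}$, and the transversality just noted forces $\mul(\Theta)=\{0\}$; hence $\Theta$ is a (generally unbounded) self-adjoint \emph{operator} in $\cG$ with $\Upsilon_0(\dom(A)+\dom(B))=\dom(\Theta)$. This upgrade of $\Theta$ from a relation to an operator is the crux of the argument, since it is what makes the boundary map defined below meaningful.

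Next, on $\dom(T)=\dom(A)+\dom(B)$ define $\Gamma_0:=\Upsilon_0\upharpoonright\dom(T)$ and $\Gamma_1:=(\Upsilon_1-\Theta\Upsilon_0)\upharpoonright\dom(T)$; this is well defined since $\Upsilon_0(\dom(T))=\dom(\Theta)$. Green's identity for $T$ on $\dom(T)$ follows from Green's identity for $\{\cG,\Upsilon_0,\Upsilon_1\}$ together with the symmetry of $\Theta$ (the terms $(\Theta\Upsilon_0 f,\Upsilon_0 g)_\cG$ and $(\Upsilon_0 f,\Theta\Upsilon_0 g)_\cG$ cancel). Using that $\Upsilon_1\upharpoonright\ker(\Upsilon_0)$ maps onto $\cG$ and that every $c\in\dom(\Theta)$ equals $\Upsilon_0 f_B$ for some $f_B\in\dom(B)$, one checks that $\ran((\Gamma_0,\Gamma_1)^\top)=\dom(\Theta)\times\cG$, which is dense in $\cG\times\cG$, and that $\ker(\Gamma_0)\cap\ker(\Gamma_1)=\dom(S)$, which is dense in $\sH$. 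Moreover $\ker(\Gamma_0)=\dom(A)$, so $T\upharpoonright\ker(\Gamma_0)=A$ is self-adjoint. Thus Theorem~\ref{ratemal} applies and yields at once that $\overline T=S^*$ and that $\{\cG,\Gamma_0,\Gamma_1\}$ is a quasi boundary triple for $T\subset S^*$ with $A=T\upharpoonright\ker(\Gamma_0)$; by construction $\ker(\Gamma_1)=\dom(B)$, hence $B=T\upharpoonright\ker(\Gamma_1)$, which is \eqref{hohodfg}.

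For the resolvent identity \eqref{kreinab}, fix $z\in\rho(A)\cap\rho(B)$ and $g\in\sH$, and put $f_A:=(A-z I_{\sH})^{-1}g\in\dom(A)$, $f_B:=(B-z I_{\sH})^{-1}g\in\dom(B)$. Then $(T-z I_{\sH})(f_B-f_A)=g-g=0$, so $f_B-f_A\in\ker(T-z I_{\sH})$, and the defining relations of the $\gamma$-field and Weyl function give $f_B-f_A=\gamma(z)\Gamma_0(f_B-f_A)=\gamma(z)\Gamma_0 f_B$ (as $\Gamma_0 f_A=0$) and $\Gamma_1 f_B-\Gamma_1 f_A=M(z)\Gamma_0 f_B$. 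Since $\Gamma_1 f_B=0$ and, by \eqref{gstar}, $\Gamma_1 f_A=\Gamma_1(A-z I_{\sH})^{-1}g=\gamma(\ol{z})^* g$, this yields $M(z)\Gamma_0 f_B=-\gamma(\ol{z})^* g$. Finally, $M(z)$ is injective: if $M(z)\varphi=0$, write $\varphi=\Gamma_0 f_z$ with $f_z\in\ker(T-z I_{\sH})$; then $\Gamma_1 f_z=M(z)\varphi=0$ shows $f_z\in\ker(B-z I_{\sH})=\{0\}$ because $z\in\rho(B)$, so $\varphi=0$. Hence $\Gamma_0 f_B=-M(z)^{-1}\gamma(\ol{z})^* g$, and therefore $f_B-f_A=-\gamma(z)M(z)^{-1}\gamma(\ol{z})^* g$; since $g\in\sH$ was arbitrary, \eqref{kreinab} follows. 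The only genuinely delicate steps are the choice of an $A$-adapted ordinary boundary triple and the ensuing upgrade of $\Theta$ to an operator; everything else is bookkeeping with the abstract identities recorded in Section~\ref{section2}.
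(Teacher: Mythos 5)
Your proof is correct and follows essentially the same route as the paper: an ordinary boundary triple adapted to $A$, the disjointness $S=A\cap B$ forcing the parameter $\Theta$ for $B$ to be a self-adjoint operator, and the shifted maps $\Gamma_0=\Upsilon_0$, $\Gamma_1=\Upsilon_1-\Theta\Upsilon_0$ on $\dom(A)+\dom(B)$. The only (harmless) variations are that you obtain $\overline T=S^*$ via Theorem~\ref{ratemal} after computing $\ran(\Gamma_0,\Gamma_1)^\top=\dom(\Theta)\times\cG$ exactly, instead of citing the closure result and using an orthogonality argument, and you derive \eqref{kreinab} forwards from $f_B-f_A\in\ker(T-zI_\sH)$ (so only injectivity of $M(z)$ is needed) rather than verifying the paper's ansatz \eqref{fg}; also note the condition you use is disjointness, not transversality.
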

\begin{proof}
Since $A$ and $B$ are self-adjoint extensions of the closed symmetric operator $S=A\cap B$, that is,
\begin{equation*}
 Sf=Af=Bf,\quad\dom(S)=\bigl\{f\in\dom(A)\cap\dom(B) \, \big| \, Af=Bf\bigr\},
\end{equation*}
there exists an ordinary boundary triple $\{\cG,\Lambda_0,\Lambda_1\}$ for $S^*$ and a self-adjoint operator $\Theta$
in $\cG$ such that 
\begin{equation}\label{abjaha}
A=S^*\upharpoonright\ker(\Lambda_0)\, \text{ and } \, B=S^*\upharpoonright\ker(\Lambda_1-\Theta\Lambda_0).
\end{equation}
We note that in the present situation the self-adjoint parameter $\Theta$ in $\cG$ is an operator (and not a linear relation) since 
$S=A\cap B$, that is, $A$ and $B$ are disjoint self-adjoint extensions of $S$ (cf.\  \cite{BGP08,DM91,DM95,GG91}).
Now consider the restriction $T$ of $S^*$ onto the subspace $\dom (A)+\dom (B)$ in \eqref{trtrz}. 
Since $A$ and $B$ are disjoint self-adjoint extensions of $S$ it follows that $\overline T=S^*$ 
(see, e.g., \cite[Proposition 2.9]{BMN15}).
We claim that $\{\cG,\Gamma_0,\Gamma_1\}$, where
\begin{equation*}
 \Gamma_0 f:=\Lambda_0 f\, \text{ and } \, \Gamma_1 f:=\Lambda_1 f-\Theta\Lambda_0 f,\quad f\in\dom (T),
\end{equation*}
 is a quasi boundary triple for $T\subset S^*$ such that \eqref{hohodfg} holds. In fact, \eqref{hohodfg} is clear from \eqref{abjaha} and the definition of $\Gamma_0$
 and $\Gamma_1$, and hence it remains to check items $(i)$--$(iii)$ in Definition~\ref{qbtdefinition}.
 For $f,g\in\dom(T)=\dom(A)+\dom(B)$ one computes 
\begin{equation*}
 \begin{split}
 (\Gamma_1 f,\Gamma_0 g)_\cG-(\Gamma_0 f,\Gamma_1 g)_\cG&=\bigl(\Lambda_1f-\Theta\Lambda_0f,\Lambda_0g\bigr)_\cG-\bigl(\Lambda_0f,\Lambda_1g-\Theta\Lambda_0g\bigr)_\cG \\
 &= (\Lambda_1f,\Lambda_0g)_\cG-(\Lambda_0f,\Lambda_1g)_\cG \\
 &= (S^*f,g)_\sH-(f,S^*g)_\sH\\
 &= (Tf,g)_\sH-(f,Tg)_\sH, 
 \end{split}
\end{equation*}
and hence the abstract Green's identity in Definition~\ref{qbtdefinition}\,$(i)$ is valid. Next, assume that
\begin{equation*}
 0=\left(\begin{pmatrix}\varphi \\ \psi\end{pmatrix},\begin{pmatrix} \Gamma_0 f\\ \Gamma_1 f\end{pmatrix}\right)_{\cG\times\cG}
=(\varphi,\Lambda_0 f)_\cG+\bigl(\psi,\Lambda_1 f-\Theta\Lambda_0 f)_\cG
\end{equation*}
holds for some $\varphi,\psi\in\cG$ and all $f\in\dom(T)$. Since $\{\cG,\Lambda_0,\Lambda_1\}$ is an ordinary boundary triple the map 
$(\Lambda_0,\Lambda_1)^\top:\dom (S^*)\rightarrow\cG\times\cG$
is surjective. It follows that $\Lambda_1\upharpoonright\ker(\Lambda_0)$ maps onto $\cG$ and hence for $f\in\dom(A)=\ker(\Lambda_0)$
one has $0=(\psi,\Lambda_1 f)$, and therefore, $\psi=0$. Now $(\varphi,\Lambda_0 f)_\cG=0$ for $f\in\dom(T)$, and the fact that
the range of the restriction of $\Lambda_0$ onto $\dom(T)$ is dense in $\cG$ (this follows since $\Lambda_0:\dom(S^*)\rightarrow\cG$ 
is surjective, continuous with respect to the graph on $\dom(S^*)$ and $\dom(T)$ is dense in $\dom(S^*)$ with respect to the graph norm), 
yield $\varphi=0$. Therefore, the range of the mapping $(\Gamma_0,\Gamma_1)^\top:\dom(T)\rightarrow\cG\times\cG$ is dense and
hence condition $(ii)$ in Definition~\ref{qbtdefinition}  holds. Condition $(iii)$ is clear from \eqref{hohodfg}. Thus, we have shown that
$\{\cG,\Gamma_0,\Gamma_1\}$ is a quasi boundary triple for $\overline T=S^*$.

Next, we verify the Krein-type resolvent formula \eqref{kreinab}. Fix $z \in \rho(A)\cap\rho(B)$ and note  that $\ker (M(z))=\{0\}$. In fact, if $M(z)\varphi=0$ for some $z \in \rho(A)\cap\rho(B)$ and 
$\varphi\in\cG$, there exists $f_z \in\ker(T - z I_{\sH})$
such that $\Gamma_0 f_z =\varphi$. Since $M(z)\Gamma_0 f_z =\Gamma_1 f_z $ by 
Definition~\ref{gwdeffi}, it follows that $\Gamma_1 f_z =0$,
that is, $f_z \in\dom(B)\cap\ker(T - z I_{\sH})$ and hence $f_z \in\ker(B - z I_{\sH})$. From $z \in \rho(B)$ one concludes that $f_z =0$ and hence $\varphi=\Gamma_0 f_z =0$,
that is, $\ker (M(z))=\{0\}$. Similarly, it follows from the decomposition \eqref{jkjk} with $A_0$ and $\Gamma_0$ replaced by $B$ and $\Gamma_1$ that
$\ran (M(z))=\ran(\Gamma_1)\supset\ran(\gamma({\ol z})^*)$ holds for $z \in \rho(A)\cap\rho(B)$; see \eqref{gstar} for the last inclusion.
Let $g\in\sH$ and define
\begin{equation}\label{fg}
 f:=(A- z I_{\sH})^{-1}g-\gamma(z) M(z)^{-1}\gamma({\ol z})^*g.
\end{equation}
Then \eqref{gstar} and Definition~\ref{gwdeffi} yield
\begin{equation*}
\begin{split} 
\Gamma_1 f&=\Gamma_1(A - z I_{\sH})^{-1}g-\Gamma_1\gamma(z)M(z)^{-1}\gamma({\ol z})^*g\\
          &=\gamma({\ol z})^*g - M(z)M(z)^{-1}\gamma({\ol z})^*g\\
          &=0
\end{split}
\end{equation*}
and hence $f\in\ker(\Gamma_1)=\dom(B)$. From 
\begin{equation*}
(B - z I_{\sH})f=(T - z I_{\sH})\bigl((A - z I_{\sH})^{-1}g-\gamma(z)M(z)^{-1}\gamma({\ol z})^*g\bigr) =g
\end{equation*}
and \eqref{fg}
one infers \eqref{kreinab}.
\end{proof}

\section{Logarithms of operator-valued Nevanlinna functions}\label{logsec}

This section is closely connected to and inspired by the considerations in \cite[Section 2]{GMN99} 
on the logarithm of operator-valued Nevanlinna (or Nevanlinna--Herglotz, resp., Riesz--Herglotz) functions. Here we shall recall some of the results formulated 
in \cite{GMN99} which go back to \cite{BE67,N87,N89,N90}, and slightly extend and reformulate these in a form convenient for our subsequent purposes.

We first recall the integral representation of the logarithm that corresponds to the cut along the negative imaginary axis,
\begin{equation}\label{logz}
 \log (z)=-i\int_0^\infty \left(\frac{1}{z+i \lambda}-\frac{1}{1+i \lambda}\right) d\lambda, 
 \quad z\in\dC,\; z\not =-i \lambda,\; \lambda \geq 0.
\end{equation}
Next, let $\cG$ be a separable Hilbert space and let $K\in\cL(\cG)$ be a bounded operator such that 
$\Im(K) \geq 0$ and $0\subset\rho(K)$. Then we use
\begin{equation}\label{logt}
 \log (K):=-i\int_0^\infty \bigl[(K+ i \lambda I_{\cG})^{-1}-(1+ i \lambda)^{-1}I_\cG\bigr] \, d\lambda
\end{equation}
as the definition of the logarithm of the operator $K$.
Then $\log (K)\in\cL(\cG)$ by \cite[Lemma 2.6]{GMN99} and in the special case that $K\in\cL(\cG)$
is self-adjoint and $0\in\rho(K)$, it follows from \cite[Lemma 2.7]{GMN99} that
\begin{equation}\label{t-}
 \Im(\log(K))=\pi E_K((-\infty,0)),
\end{equation}
where $E_K(\cdot)$ is the spectral measure of $K$. In particular, if $K\in\cL(\cG)$ is self-adjoint and $0\in\rho(K)$ then 
$\sigma(K)\subset (0,\infty)$ if and only if $\log(K)$ is a self-adjoint operator.

In the next lemma we show that besides $\log(K)$ also  $\log(K^*)$ is well-defined via \eqref{logt} when $K$ is a dissipative operator with spectrum 
off the imaginary axis (cf.\ \cite[Lemma 2.6 and Lemma 2.7]{GMN99}).
\begin{lemma}\label{loglem}
 Let $K\in\cL(\cG)$ be a dissipative operator such that $i \lambda \in \rho(K)$ for all $\lambda \geq 0$, and define 
 \begin{equation}\label{logns}
  \log(K^*):=-i\int_0^\infty \bigl[(K^*+ i \lambda I_{\cG})^{-1}-(1+ i \lambda)^{-1}I_\cG\bigr] \, d\lambda.
 \end{equation}
 Then $\log(K^*)\in\cL(\cG)$.
\end{lemma}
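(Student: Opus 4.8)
The plan is to reduce the claim about $\log(K^*)$ to the already-known fact (\cite[Lemma 2.6]{GMN99}) that $\log(L) \in \cL(\cG)$ for any dissipative $L$ with $i\lambda \in \rho(L)$, $\lambda \geq 0$. The natural candidate for such an $L$ is $-K^*$: if $K$ is dissipative, i.e.\ $\Im(K) \geq 0$, then $\Im(-K^*) = \Im(K) \geq 0$ (using $\Im(K^*) = -\Im(K)$ after taking adjoints), so $-K^*$ is dissipative. Moreover $i\lambda \in \rho(-K^*)$ for $\lambda \geq 0$ means $-i\lambda \in \rho(K^*)$, equivalently $i\lambda \in \rho(K)$ (passing to adjoints and using $\lambda$ real), which holds by hypothesis. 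Hence $\log(-K^*)$ is defined by the formula \eqref{logt} and lies in $\cL(\cG)$ by \cite[Lemma 2.6]{GMN99}.

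The remaining task is to connect $\log(K^*)$, defined by \eqref{logns}, with $\log(-K^*)$. First I would observe that in \eqref{logns} the integrand involves $(K^* + i\lambda I_{\cG})^{-1}$, which by taking adjoints equals $\big((K - i\lambda I_{\cG})^{-1}\big)^*$, and $K - i\lambda I_{\cG}$ is invertible precisely because $i\lambda \in \rho(K)$; so the integrand in \eqref{logns} is well-defined in $\cL(\cG)$ for every $\lambda \geq 0$. Next I would establish convergence of the improper integral: for large $\lambda$ one has the norm bound
\begin{equation*}
 \big\|(K^* + i\lambda I_{\cG})^{-1} - (1 + i\lambda)^{-1} I_{\cG}\big\|_{\cL(\cG)}
 = \big\|(1 + i\lambda)^{-1}(K^* + i\lambda I_{\cG})^{-1}\big((1+i\lambda) I_{\cG} - (K^* + i\lambda I_{\cG})\big)\big\|_{\cL(\cG)},
\end{equation*}
and since $\|(K^* + i\lambda I_{\cG})^{-1}\|_{\cL(\cG)} \leq (\lambda - \|{\rm Re}(K)\|)^{-1} = O(\lambda^{-1})$ for large $\lambda$ (because the numerical range of $K^*$, hence its spectrum, lies in a horizontal strip, so $\operatorname{dist}(i\lambda, \sigma(K^*)) \geq \lambda - \|\operatorname{Re}(K)\|_{\cL(\cG)}$ — one can also simply invoke the corresponding estimate already used in the proof of \cite[Lemma 2.6]{GMN99} applied to $-K^*$), the integrand is $O(\lambda^{-2})$ and the integral converges absolutely in $\cL(\cG)$. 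Alternatively, and more cleanly, I would avoid re-deriving the estimate by noting that $(K^* + i\lambda I_{\cG})^{-1} = -\big((-K^*) - i\lambda I_{\cG}\big)^{-1}$ has the same norm as $\big((-K^*) - i\lambda I_{\cG}\big)^{-1}$, and the latter is exactly the resolvent appearing (up to the obvious sign/reflection in the variable $\lambda$) in the convergent integral defining $\log(-K^*)$ via \eqref{logt}, whose convergence is part of \cite[Lemma 2.6]{GMN99}.

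Having set up well-definedness and convergence, the conclusion $\log(K^*) \in \cL(\cG)$ is immediate: the integral \eqref{logns} is an absolutely convergent $\cL(\cG)$-valued Bochner integral, hence its value lies in $\cL(\cG)$. (If a more explicit relation is desired, one can record the identity $\log(K^*) = \overline{\log(K)}$, obtained by taking adjoints inside \eqref{logns} and comparing with \eqref{logt}, which exhibits $\log(K^*)$ as the adjoint of the bounded operator $\log(K)$ — here $\log(K)$ itself is well-defined and bounded by \cite[Lemma 2.6]{GMN99} since $K$ dissipative with $i\lambda \in \rho(K)$ satisfies the hypotheses of that lemma. This last remark, together with the elementary fact that $\overline{K}^{\,*} = K^*$ for $K \in \cL(\cG)$, gives the shortest route.) The main obstacle is essentially bookkeeping: being careful about which half-line the spectrum-exclusion condition refers to after passing to adjoints ($i\lambda \in \rho(K)$ versus $-i\lambda \in \rho(K^*)$), and confirming that \cite[Lemma 2.6]{GMN99} is stated in enough generality — for a general dissipative $K$ with $i\lambda \in \rho(K)$, $\lambda \geq 0$, rather than only for the $K$ with $0 \in \rho(K)$ and $\Im(K) \geq 0$ appearing in \eqref{logt} — to be applied to $-K^*$; if it is not, one falls back on the direct resolvent estimate sketched above, which is entirely routine.
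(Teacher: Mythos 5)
Your two ``clean'' routes do not work, and this is precisely why the paper proves Lemma~\ref{loglem} separately instead of citing \cite{GMN99}. (a) The reduction to $-K^*$: the integral \eqref{logt} applied to $-K^*$ contains $\bigl((-K^*)+i\lambda I_\cG\bigr)^{-1}=-(K^*-i\lambda I_\cG)^{-1}$, i.e.\ the resolvent of $K^*$ at the point $+i\lambda$, whereas \eqref{logns} contains $(K^*+i\lambda I_\cG)^{-1}$, the resolvent of $K^*$ at $-i\lambda$. The ``obvious sign/reflection in $\lambda$'' would move the integration variable to $\lambda\le 0$, outside the range of \eqref{logt}, so neither the convergence of the integral defining $\log(-K^*)$ nor the estimate $\Vert(L+i\lambda I_\cG)^{-1}\Vert\le\lambda^{-1}$ for dissipative $L$ (the bound underlying \cite[Lemma 2.6]{GMN99}, and \eqref{1t}) transfers: the points $-i\lambda$ lie on the same side as the numerical range of $K^*$, which is exactly why the extra hypothesis $i\lambda\in\rho(K)$, $\lambda\ge0$, is needed for the integrand to exist at all and why its norm must be estimated by a different argument. (b) The parenthetical ``shortest route'' identity $\log(K^*)=(\log K)^*$ is false in general: taking adjoints in \eqref{logt} gives $(\log K)^*=i\int_0^\infty\bigl[(K^*-i\lambda I_\cG)^{-1}-(1-i\lambda)^{-1}I_\cG\bigr]d\lambda$, which is not the right-hand side of \eqref{logns}; in scalar terms the branch cut along the negative imaginary axis is not conjugation-symmetric, and $\log(\bar z)$ differs from $\overline{\log z}$ by $2\pi i$ when $\Re (z)<0$. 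If that identity held, the part of Theorem~\ref{nthm} that establishes $\bigl(\log N(\bar z)\bigr)^*=\log N(z)$ only near intervals where $\sigma(N(z))\subset(\varepsilon,\infty)$ (via comparison with the logarithm $\ln$ cut along the negative real axis) would be superfluous.

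Your fallback, the direct estimate, is the right idea and is essentially the paper's proof, but the justification you give for the key bound is invalid as stated: for a non-normal operator, $\operatorname{dist}(-i\lambda,\sigma(K^*))$ does not control $\Vert(K^*+i\lambda I_\cG)^{-1}\Vert_{\cL(\cG)}$. What is true is the numerical-range estimate: $\Vert(K^*+i\lambda I_\cG)f\Vert_\cG\,\Vert f\Vert_\cG\ge\vert((K^*+i\lambda I_\cG)f,f)_\cG\vert\ge d_\lambda\Vert f\Vert_\cG^2$, where $d_\lambda$ is the distance from $-i\lambda$ to the numerical range of $K^*$; since that numerical range lies in the strip $\{w\,|\,-\Vert\Im(K)\Vert_{\cL(\cG)}\le\Im(w)\le0\}$ (and in the disk of radius $\Vert K^*\Vert_{\cL(\cG)}$), this gives $\Vert(K^*+i\lambda I_\cG)^{-1}\Vert_{\cL(\cG)}\le(\lambda-\Vert K^*\Vert_{\cL(\cG)})^{-1}$ for $\lambda>\Vert K^*\Vert_{\cL(\cG)}$ --- exactly the bound \eqref{malsehen} the paper derives from $\Im(K^*+i\Vert K^*\Vert_{\cL(\cG)}I_\cG)\ge0$; note also that the relevant constant is $\Vert\Im(K)\Vert_{\cL(\cG)}$ or $\Vert K^*\Vert_{\cL(\cG)}$, not $\Vert\Re(K)\Vert_{\cL(\cG)}$ as you wrote. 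Combined with boundedness of $(K^*+i\lambda I_\cG)^{-1}$ near $\lambda=0$ (Neumann series around $(K^*)^{-1}$, as in the paper, or continuity of the resolvent on the compact segment), your factorization of the integrand and the resulting $O(\lambda^{-2})$ tail give absolute convergence; with these repairs your argument reduces to the paper's proof.
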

\begin{proof}
 From $\sigma(K^*)=\{z\in\dC \, | \, {\ol z}\in\sigma(K)\}$ and the assumption $i\lambda \in \rho(K)$ 
 for $\lambda \geq 0$ it is clear that
     $- i \lambda\in\rho(K^*)$ for $\lambda \geq 0$. Since $K$ is dissipative it follows that $K^*$ is accretive, that is, $\Im(K^*) \leq 0$. For $\delta>0$ one estimates  
\begin{align}
\begin{split}
 \Vert \log(K^*)\Vert_{\cL(\cG)} &\leq 
 \int_0^\delta \bigl[\big\Vert (K^*+ i \lambda I_{\cG})^{-1}\big\Vert_{\cL(\cG)} + 1\bigr] \, d\lambda \\
 & \quad + \int_\delta^\infty \big\Vert (K^*+ i \lambda I_{\cG})^{-1}\big\Vert_{\cL(\cG)} 
 \bigl(\Vert K\Vert_{\cL(\cG)} + 1\bigr) 
 \lambda^{-1}\, d\lambda.     \lb{3.4a} 
 \end{split} 
\end{align}
For $0< \lambda < \big\Vert (K^*)^{-1}\big\Vert^{-1}_{\cL(\cG)}$ one has  
\begin{equation*}
\big\Vert (K^*+ i \lambda I_{\cG})^{-1}\big\Vert_{\cL(\cG)} \leq \frac{\big\Vert(K^*)^{-1}\big\Vert_{\cL(\cG)}}{1-\lambda \big\Vert (K^*)^{-1}\big\Vert_{\cL(\cG)}},    
\end{equation*}
and with the choice $\delta = \big(2 \big\Vert (K^*)^{-1}\big\Vert_{\cL(\cG)}\big)^{-1}$ it follows that the first integral in \eqref{3.4a} is bounded.
In order to show that the second integral in \eqref{3.4a} is also bounded it suffices to show that 
\begin{equation}\label{malsehen}
\big\Vert (K^*+ i \lambda I_{\cG})^{-1}\big\Vert_{\cL(\cG)} \leq \frac{1}{\lambda - \Vert K^*\Vert_{\cL(\cG)}}, 
\quad \lambda > \Vert K^*\Vert_{\cL(\cG)}.
\end{equation}
In fact, since $\Im (K^*+i\Vert K^*\Vert_{\cL(\cG)} I_{\cG})\geq 0$ one estimates for 
$\lambda > \Vert K^*\Vert_{\cL(\cG)}$, 
\begin{equation}
 \begin{split}
 0 &\leq (\lambda - \Vert K^*\Vert_{\cL(\cG)})\Vert f\Vert^2_{\cG}   \\ 
&  =\Im \bigl(i(\lambda - \Vert K^*\Vert_{\cL(\cG)}) f,f\bigr)_{\cG}  \\
 & \leq \Im \bigl((i \lambda - i\Vert K^*\Vert_{\cL(\cG)}) f,f\bigr)_{\cG} 
 +\Im \bigl((K^*+i\Vert K^*\Vert_{\cL(\cG)} I_{\cG}) f,f\bigr)_{\cG}  \\
 & =\Im\bigl((K^*+ i \lambda I_{\cG})f,f\bigr)_{\cG}  \\
 & \leq \Vert (K^*+ i \lambda I_{\cG})f\Vert_{\cG} \Vert f\Vert_{\cG}
 \end{split}
\end{equation}
and for $f\not=0$ this yields 
\begin{equation}\label{fastfertig}
 0\leq (\lambda - \Vert K^*\Vert_{\cL(\cG)})\Vert f\Vert_{\cG} 
 \leq \Vert (K^*+ i \lambda I_{\cG})f\Vert_{\cG}.
\end{equation}
Since $- i \lambda\in\rho(K^*)$ there exists $g\in\cG$ such that $f=(K^*+ i \lambda I_{\cG})^{-1}g$ and then \eqref{fastfertig} has the form
\begin{equation*}
 \big\Vert (K^*+ i \lambda I_{\cG})^{-1} g \big\Vert_{\cG} \leq 
 \frac{1}{\lambda - \Vert K^*\Vert_{\cL(\cG)}} \Vert g\Vert_{\cG}, 
 \quad \lambda > \Vert K^*\Vert_{\cL(\cG)}.
\end{equation*}
This implies \eqref{malsehen}, and hence the second integral in the estimate \eqref{3.4a} is 
finite. Thus, $\log(K^*)$ in \eqref{logns} is a bounded operator in $\cG$.
\end{proof}

We recall that a function 
$N:\dC_+\rightarrow\cL(\cG)$ is an operator-valued {\it Nevanlinna} (or {\it Riesz--Herglotz}) {\it function} if $N$ is holomorphic and $\Im(N(z))\geq 0$
holds for all $z\in\dC_+$. An $\cL(\cG)$-valued Nevanlinna function is extended onto $\dC_-$ by setting
\begin{equation}\label{n-}
 N(z):=N({\ol z})^*,\quad z\in\dC_-.
\end{equation}
We shall say that a Nevanlinna function $N$ admits an {\it analytic continuation by reflection} with respect to some open subset $I\subset\dR$ if $N$ can be continued analytically 
from $\dC_+$ onto an open set $\cO\subset\dC$ which contains $I$ such that the values of the continuation in $\cO\cap\dC_-$ coincide with the values of 
$N$ in \eqref{n-} there.

\begin{example}
If $\sqrt{z}$ is fixed for $\dC\backslash[0,\infty)$ by $\Im(\sqrt{z}) > 0$ and by $\sqrt{z}\geq 0$ for $z\in[0,\infty)$ then $\dC_+\ni z\mapsto\sqrt{z}$ is a $($scalar\,$)$ Nevanlinna function
which admits an analytic continuation by reflection with respect to $(-\infty,0)$, but it does not admit an analytic continuation by reflection with respect to any open subinterval of $[0,\infty)$.
\end{example}

An operator-valued Nevanlinna function admits a minimal operator representation via the resolvent of a self-adjoint operator or relation in an auxiliary or larger Hilbert space (see, e.g., 
\cite{BE67,HSW98,KL77,LT77,N87}).
More precisely, if $N:\dC_+\rightarrow \cL(\cG)$ is a Nevanlinna function and $z_0\in\dC_+$ is fixed then there exists a Hilbert space $\cK$, 
a self-adjoint operator or self-adjoint relation $L$ in $\cK$ and an operator $R \in \cL(\cG,\cK)$ (depending on the choice of $z_0$) such that
\begin{equation}\label{logmrep7}
N(z) = \Re(N(z_0)) + (z - \Re(z_0)) R^* R + (z - z_0)(z - {\ol z}_0)R^*(L - z I_{\cK})^{-1}R
\end{equation}
holds for $z \in \dC_+$. If $N$ satisfies the condition
\begin{equation}\label{condin}
 \lim_{y\uparrow +\infty} y^{-1}(N(iy)h,h)_{\cG}=0\, \text{ for all } \, h\in\cG,
\end{equation}
then $L$ in \eqref{logmrep7} is a self-adjoint  operator in $\cK$ (and not a relation, cf.\  \cite[Corollary~2.5]{LT77}).
The representation \eqref{logmrep7} also holds for $z\in\dC_-$ when $N$ is extended onto $\dC_-$ via \eqref{n-}.  For us it is important that the model can be chosen 
{\it minimal},
that is, the minimality condition
\begin{equation*}
 \cK=\text{clsp}\,\bigl\{(I_\cK+(z - z_0)(L - z I_{\cK})^{-1})R h \, \big| \, z\in\dC\backslash\dR, 
 \, h\in\cG\bigr\}
\end{equation*}
is satisfied, in which case the resolvent set $\rho(L)$ of $L$ coincides with the maximal domain of analyticity of the function $N$. In particular, in this case 
$N$ admits an
analytic continuation by reflection with respect to an open subset $I\subset\dR$ if and only if $I\subset\rho(L)$, and the open subset $\rho(L)\cap\dR$ is maximal with this property.

Next, assume that $N$ is an $\cL(\cG)$-valued Nevanlinna function and suppose that $N(z)^{-1}\in\cL(\cG)$ for some, and hence (by \cite[Lemma 2.3]{GMN99}) for all $z\in\dC\backslash\dR$.
Then we define for $z\in\dC_+$ the logarithm $\log(N(z))$ in accordance with \eqref{logt} by
\begin{equation}\label{logn}
 \log (N(z)):=-i\int_0^\infty \bigl[(N(z)+ i \lambda I_{\cG})^{-1}-(1+ i \lambda)^{-1}I_\cG\bigr] \, d\lambda,
\end{equation}
and extend the function $\log (N)$ onto $\dC_-$ by reflection,
\begin{equation}\label{logn2}
 \log(N(z)):=\bigl(\log (N({\ol z}))\bigr)^*,\quad z\in\dC_- 
\end{equation}
(cf.\ \eqref{n-}).
By \cite[Lemma 2.8]{GMN99} the function $ z \mapsto\log(N(z))$ is also an $\cL(\cG)$-valued Nevanlinna function
and satisfies
\begin{equation}\label{estilogi}
 0\leq\Im(\log(N(z)))\leq\pi I_\cG,\quad z\in\dC_+.
\end{equation}

The following theorem is a variant and slight extension of \cite[Theorem 2.10]{GMN99}, 
the new and important feature here is that we provide a sufficient condition in terms of the function $N$ such that $\log (N)$ admits an analytic continuation by reflection with respect 
to some real interval and a corresponding integral representation there.

\begin{theorem}\label{nthm}
Let $N:\dC\backslash\dR\rightarrow\cL(\cG)$ be a Nevanlinna function and assume that $N(z)^{-1}\in\cL(\cG)$ for some, and hence for all $z\in\dC\backslash\dR$. Then there exists a weakly Lebesgue measurable operator-valued function 
$\lambda \mapsto \Xi(\lambda)\in\cL(\cG)$ on $\dR$ such that 
\begin{equation}\label{bitte}
\Xi(\lambda)=\Xi(\lambda)^*\, \text{ and } \, 
 0\leq\Xi(\lambda)\leq I_\cG\, \text{ for a.e.~$\lambda \in \dR$,}
\end{equation}
and the Nevanlinna function $\log (N):\dC\backslash\dR\rightarrow\cL(\cG)$ in \eqref{logn}--\eqref{logn2} admits an integral representation of the form
\begin{equation}\label{bittesehr}
 \log(N(z))= C +\int_\dR\left(\frac{1}{\lambda - z}-\frac{\lambda}{1+\lambda^2}\right) 
 \Xi(\lambda)\, d\lambda,
\end{equation}
where $C=\Re(\log(N(i)))\in\cL(\cG)$ is a self-adjoint operator and the integral is understood in the weak sense.

If, in addition, $N$ admits an analytic continuation by reflection with respect to an open interval $I\subset\dR$ such that $\sigma(N(z))\subset (\varepsilon,\infty)$ for some 
$\varepsilon>0$ and  all $z \in I$, 
then also $\log (N)$ admits an analytic continuation by reflection with respect to $I$, $\Xi(\lambda)=0$ for a.e.~$\lambda \in  I$, and \eqref{bittesehr} remains valid for $z \in I$.
\end{theorem}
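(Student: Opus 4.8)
The plan is to prove the integral representation \eqref{bittesehr} by applying the known operator-valued Herglotz representation theorem to the Nevanlinna function $\log(N)$, and then to separately analyze the behavior near the interval $I$ using the minimal model \eqref{logmrep7}. First I would recall that, by \cite[Lemma 2.8]{GMN99}, $z \mapsto \log(N(z))$ is an $\cL(\cG)$-valued Nevanlinna function satisfying the two-sided bound \eqref{estilogi}, namely $0 \leq \Im(\log(N(z))) \leq \pi I_\cG$ for $z \in \dC_+$. A bounded-type Nevanlinna function with a uniformly bounded imaginary part admits a Herglotz representation with an operator-valued density: there exist a self-adjoint $C \in \cL(\cG)$ and a weakly measurable $\lambda \mapsto \Xi(\lambda) \in \cL(\cG)$ with $0 \leq \Xi(\lambda) \leq I_\cG$ a.e.\ such that \eqref{bittesehr} holds, the integral understood weakly. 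Concretely, for each $h \in \cG$ the scalar function $z \mapsto (\log(N(z)) h, h)_\cG$ is a scalar Nevanlinna function with imaginary part in $[0,\pi \|h\|^2]$, hence has an absolutely continuous measure with density bounded by $\|h\|^2$; polarization and a standard argument (as in \cite{GMN99,LT77}) produce the operator density $\Xi(\lambda)$ with the stated properties, and the normalization $C = \Re(\log(N(i)))$ follows by evaluating \eqref{bittesehr} at $z=i$ and using $\int_\dR (\lambda(1+\lambda^2)^{-1} - \lambda(1+\lambda^2)^{-1}) \cdots$, i.e.\ that the kernel $\tfrac{1}{\lambda - i} - \tfrac{\lambda}{1+\lambda^2} = \tfrac{i}{1+\lambda^2}$ has purely imaginary integral against the self-adjoint $\Xi$.

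For the second assertion, suppose $N$ continues analytically by reflection across the open interval $I \subset \dR$ with $\sigma(N(z)) \subset (\varepsilon,\infty)$ for all $z \in I$. I would use the minimal self-adjoint model: since $N$ is invertible on $\dC \backslash \dR$, condition \eqref{condin} holds (the resolvent-type growth forces $y^{-1}(N(iy)h,h)_\cG \to 0$), so \eqref{logmrep7} provides a minimal self-adjoint operator $L$ in an auxiliary Hilbert space $\cK$ with $R \in \cL(\cG,\cK)$, and by minimality $\rho(L) \cap \dR$ is exactly the largest open set across which $N$ continues by reflection; hence $I \subset \rho(L)$. The key point is that for $z \in I$ the operator $N(z)$ is self-adjoint with spectrum in $(\varepsilon, \infty) \subset (0,\infty)$, so by \eqref{t-} one has $\Im(\log(N(z))) = \pi E_{N(z)}((-\infty,0)) = 0$, i.e.\ $\log(N(z))$ is self-adjoint for $z \in I$. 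I would then argue that $z \mapsto \log(N(z))$, being given by the norm-convergent integral \eqref{logn} with $N(z) + i\lambda I_\cG$ uniformly boundedly invertible for $z$ in a complex neighborhood of any compact subinterval of $I$ (using that $\sigma(N(z))$ stays off $-i[0,\infty)$ there by a compactness/continuity argument based on $\sigma(N(z)) \subset (\varepsilon,\infty)$ on $I$ and analyticity), defines an $\cL(\cG)$-valued analytic function on an open complex neighborhood $\cO$ of $I$, and that its values on $\cO \cap \dC_-$ agree with \eqref{logn2}; this is precisely the analytic continuation by reflection of $\log(N)$ across $I$.

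Finally, to deduce $\Xi(\lambda) = 0$ for a.e.\ $\lambda \in I$ and the validity of \eqref{bittesehr} for $z \in I$, I would invoke the scalar Stieltjes inversion / boundary-value theory: for each $h \in \cG$ the density of the scalar measure in the Herglotz representation of $(\log(N(z))h,h)_\cG$ equals $\pi^{-1} \lim_{\eps \downarrow 0} \Im(\log(N(\lambda+i\eps))h,h)_\cG$ a.e., and on $I$ this limit is $\pi^{-1}(\Im(\log(N(\lambda)))h,h)_\cG = 0$ by the self-adjointness just established (the nontangential boundary value from $\dC_+$ coincides with the value on $I$ by analytic continuation). Since this holds for a dense set of $h$ and $0 \leq \Xi(\lambda) \leq I_\cG$, weak measurability gives $\Xi(\lambda) = 0$ for a.e.\ $\lambda \in I$; then the integral in \eqref{bittesehr} over $I$ drops out, the kernel $\tfrac{1}{\lambda - z} - \tfrac{\lambda}{1+\lambda^2}$ is real-analytic in $z$ on $I$ for $\lambda \in \dR \backslash I$, and \eqref{bittesehr} extends from $\dC_+$ to $z \in I$ by analytic continuation, matching the self-adjoint analytic continuation of $\log(N)$. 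I expect the main obstacle to be the uniform control needed to show that $\log(N)$ genuinely continues \emph{analytically} (not merely continuously) across $I$: one must verify that $N(z) + i\lambda I_\cG$ is invertible with locally uniform bounds for all $\lambda \geq 0$ and all $z$ in a full complex neighborhood of each compact $K \subset I$, which requires combining the spectral hypothesis $\sigma(N(z)) \subset (\varepsilon,\infty)$ on $I$, the openness of the resolvent set, and the holomorphy of $z \mapsto N(z)$ to push the spectrum away from the negative imaginary axis on a neighborhood, and then differentiating \eqref{logn} under the integral sign justified by the resulting uniform resolvent estimates.
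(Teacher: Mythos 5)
Your strategy is essentially sound and overlaps substantially with the paper's, but you reach the two key conclusions by different means, and this is worth recording. For the representation \eqref{bittesehr} you apply scalar Herglotz theory directly to $z\mapsto(\log(N(z))h,h)_\cG$, using the bound \eqref{estilogi} to exclude a linear term and to force an absolutely continuous measure with density bounded by $\Vert h\Vert_\cG^2$, and then polarize to get the weakly measurable $\Xi$ as in \eqref{bitte}; the paper instead first invokes the operator model \eqref{logmrep7} for $\log(N)$ (verifying \eqref{condin} \emph{for $\log(N)$}, so that $L$ is an operator) and then runs the same Stieltjes-inversion argument on the measures $d\omega_h$, following \cite{GMN99}. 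Your shortcut is legitimate. For the vanishing of $\Xi$ on $I$, the paper uses minimality of the model for $\log(N)$ (so $I\subset\rho(L)$ and the representing measures have no mass on $I$), whereas you use Fatou/Stieltjes inversion of the boundary values together with $\Im(\log(N(\lambda)))=0$ on $I$ from \eqref{t-}; once the continuation of $\log(N)$ up to $I$ is in place, this is a valid alternative, and your countable-dense-set remark handles the a.e.\ issue. Your treatment of the uniform invertibility of $N(z)+i\lambda I_\cG$ near compact subintervals of $I$ (upper semicontinuity of the spectrum plus the large-$\lambda$ bound) is exactly what the paper packages into Lemma~\ref{loglem} and the localization to small balls $\cB_{z_0}$.

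Two points need attention. First, your claim that invertibility of $N$ off $\dR$ implies \eqref{condin} \emph{for $N$} is false ($N(z)=zI_\cG$ is a counterexample); fortunately that minimal-model-for-$N$ digression is never used afterwards and can simply be deleted — what matters is \eqref{condin} for $\log(N)$, which follows from \eqref{estilogi}. Second, and more substantively, the assertion that the values of your integral-formula continuation of $\log(N)$ on $\cO\cap\dC_-$ "agree with \eqref{logn2}" is exactly the nontrivial step at which the paper introduces the auxiliary logarithm $\ln$ with cut along $(-\infty,0]$, identifies $\log(N(z))=\ln(N(z))$ at real points of $I$ via the spectral theorem, and propagates $(\log(N(z)))^*=\log(N(\overline{z}))$ by analyticity; note that $(K^*+i\lambda I_\cG)^{-1}\neq\bigl((K+i\lambda I_\cG)^{-1}\bigr)^*$, so the agreement is not a formal consequence of \eqref{logn}. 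You do have the ingredients to close this: take $\cO$ symmetric with respect to $\dR$; then $z\mapsto\bigl(\log(N(\overline{z}))\bigr)^*$ is analytic on $\cO$ as well, and by \eqref{t-} it coincides with your continuation on $I\cap\cO$ (where $\log(N(z))$ is self-adjoint), hence on all of $\cO$ by the identity theorem. This argument, or the paper's $\ln$-bridge, must be written out for the proof to be complete.
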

\begin{proof}
We make use of the representation \eqref{logmrep7} applied to the Nevanlinna function $\log (N)$ 
with $z_0=i$. Then there exists a Hilbert space $\cK$ and $R\in\cL(\cG,\cK)$
such that
\begin{equation}\label{logmrep8}
\log (N(z)) = C+ z R^* R + (1 + z^2)R^*(L - z I_{\cK})^{-1}R, \quad z \in \bbC \backslash \bbR, 
\end{equation}
where $C=\Re(\log (N(i))) \in \cL(\cG)$ is a self-adjoint operator. For $h\in\cG$ it follows from \eqref{logmrep8} that
\begin{equation*}
 \lim_{y\rightarrow +\infty}\frac{1}{y}\Re \bigl(\log((N(iy))h,h)_{\cG})\bigr) =0
\end{equation*}
and \eqref{estilogi} implies
\begin{equation*}
 \lim_{y\rightarrow +\infty} \frac{1}{y}\Im \bigl(\log((N(iy))h,h)_{\cG})\bigr) =0,
\end{equation*}
so that \eqref{condin} holds for the function $\log (N)$. Hence $L$ in \eqref{logmrep8} is a self-adjoint operator in $\cK$ (cf.\ \cite[Lemma 2.9]{GMN99}). 
We can assume that the model
is chosen minimal and hence $\rho(L)$ coincides with the maximal domain of analyticity of the Nevanlinna function $\log (N)$.

In order to prove \eqref{bitte} and \eqref{bittesehr} one can argue in the same way as in the proof of \cite[Theorem 2.10]{GMN99}. 
Let $\lambda \mapsto E_L(\lambda)$ be the spectral function of $L$ such that 
$\lim_{\lambda \downarrow -\infty} (E_L(\lambda)h,h)_{\cG}=0$. Then \eqref{logmrep8} yields
\begin{equation*}
 \bigl(\log (N(z))h,h\bigr)_{\cG} = (Ch,h)_{\cG}+\int_\dR \left(\frac{1}{\lambda - z} 
 - \frac{\lambda}{1 + \lambda^2}\right)(1 + \lambda^2)\, d \bigl(R^*E_L(\lambda)Rh,h\bigr)_{\cG}
\end{equation*}
for $h\in\cG$, $z \in \bbC \backslash \bbR$, and \eqref{estilogi} and the Stieltjes inversion formula implies that the measures
\begin{equation}\label{omegah}
 d \omega_h(\cdot)=(1 + \lambda^2)d \bigl(R^*E_L(\cdot)Rh,h\bigr)_{\cG}
\end{equation}
are absolutely continuous with respect to the Lebesgue measure $d\lambda$ and there exist measurable functions 
$\xi_h$ with $0\leq \xi_h(\lambda)\leq\Vert h\Vert^2_{\cG}$ for a.e.~$\lambda \in \dR$ 
such that $d\omega_h(\lambda)=\xi_h(\lambda)\, d\lambda$. Hence there exists a weakly Lebesgue measurable function $\lambda \mapsto\Xi(\lambda)$ such that
\begin{equation*}
 \xi_h(\lambda)=(\Xi(\lambda)h,h)_{\cG}\, \text{ and } \, 0\leq\Xi(\lambda)\leq I_\cG,
\end{equation*}
proving \eqref{bitte} and \eqref{bittesehr}.

Next, assume that $N$ admits an analytic continuation by reflection with respect to an open interval $I\subset\dR$ such that $\sigma(N(z))\subset (\varepsilon,\infty)$ for some 
$\varepsilon>0$ and  all $z \in I$. Fix some $z_0 \in I$ and an open ball $\cB_{z_0}\subset\dC$ centered at $z_0$ such that $N$ is analytic on $\cB_{z_0}$.
Since $\sigma(N(z_0))\subset(\varepsilon,\infty)$ we can assume that $\cB_{z_0}$ was chosen such that 
\begin{equation*}
 \sigma(N(z))\subset \{z\in\dC \, | \, \varepsilon/2 < \Re(z), \,  
0 \leq \Im (z) < \varepsilon\},   \quad  z \in\cB_{z_0}\cap\dC_+,
\end{equation*}
and hence the operators $N(z)$, $z \in \cB_{z_0}\cap\dC_+$, satisfy the assumptions in Lemma~\ref{loglem}. 
Therefore, the operators
\begin{equation*}
 \log(N(z)^*):=-i\int_0^\infty \bigl[(N(z)^*+ i \lambda I_{\cG})^{-1}-(1+ i \lambda)^{-1}I_\cG\bigr] 
 \, d\lambda, 
 \quad z \in\cB_{z_0}\cap\dC_+,
\end{equation*}
are well-defined, and since $N({\ol z})=N(z)^*$, it follows that 
\begin{equation*}
 \log(N(z))=-i\int_0^\infty \bigl[(N(z)+ i \lambda I_{\cG})^{-1}-(1+ i \lambda)^{-1}I_\cG\bigr] \, d\lambda, 
 \quad z \in \cB_{z_0}\cap\dC_-,
\end{equation*}
are well-defined, bounded operators in $\cG$.
Furthermore, Lemma~\ref{loglem} also ensures that for $z \in \cB_{z_0}\cap\dR$ the operators 
\begin{equation*}
 \log(N(z)):=-i\int_0^\infty \bigl[(N(z)+ i \lambda I_{\cG})^{-1}-(1+ i \lambda)^{-1}I_\cG\bigr] \, d\lambda, 
 \quad z \in \cB_{z_0}\cap\dR,
\end{equation*}
are well-defined, bounded operators in $\cG$. Thus for all $z \in \cB_{z_0}$, the operators 
$\log(N(z))$ are well-defined
via \eqref{logn}. It then follows from \eqref{logn} that the function $ z \mapsto\log(N(z))$ is 
analytic on $\cB_{z_0}$ (cf.\ \cite[Proof of Lemma 2.8]{GMN99}).

We shall now also make use of the logarithm 
\begin{equation}
 \ln(z)=\int_{-\infty}^0\left(\frac{1}{\lambda - z}-\frac{\lambda}{1 + \lambda^2}\right) d\lambda, 
 \quad z\in\dC\backslash (-\infty,0],
\end{equation}
which corresponds to the cut along the negative real axis. Since 
\begin{equation*}
 \sigma(N(z))\subset \{ z\in\dC \, | \, \varepsilon/2 < \Re(z), 
 \, -\varepsilon < \Im(z) < \varepsilon\},\quad z \in \cB_{z_0},
\end{equation*}
it follows that  
\begin{equation}\label{lnn}
 \ln(N(z))=\int_{-\infty}^0 \bigl[(\lambda I_{\cG}-N(z))^{-1} - \lambda(1 + \lambda^2)^{-1}I_\cG\bigr] 
 \, d\lambda,  \quad z \in \cB_{z_0},
\end{equation}
are well-defined operators and the function $ z \mapsto\ln(N(z))$ is analytic on $\cB_{z_0}$. 
In addition, \eqref{lnn} yields
\begin{equation}\label{lnns}
 \bigl(\ln(N(z))\bigr)^*=\ln(N(z)^*),\quad z \in\cB_{z_0}.
\end{equation}
As $\log(z)=\ln(z)$ (see \eqref{logz}) for all $z>0$ and $N(z)$ is self-adjoint for $z \in I$
it follows from the spectral theorem that
\begin{equation*}
 \log(N(z))=\ln(N(z)),\quad z \in I,
\end{equation*}
and hence $\log(N(z))=\ln(N(z))$, $z \in\cB_{z_0}$, by analyticity. Therefore, \eqref{lnns} and $N(z)^*=N({\ol z})$ yield
\begin{equation*}
 \bigl(\log(N(z))\bigr)^*=\bigl(\ln(N(z))\bigr)^*=\ln(N(z)^*)=\ln(N({\ol z}))=\log(N({\ol z})), 
 \quad z \in \cB_{z_0}.
\end{equation*}
It follows that $ z \mapsto \log(N(z))$ is analytic on $\cB_{z_0}$ and the continuation of $\log(N)$
onto $\cB_{z_0} \cap \dC_-$ coincides with the extension of $\log (N)$ onto $\dC_-$ defined by
\begin{equation*}
\log (N(z))=\bigl(\log N({\ol z})\bigr)^*,\quad z\in\dC_- 
\end{equation*}
(cf.\ \eqref{n-}). This reasoning applies to all $\nu\in I$ and hence we have shown that $\log (N)$ admits an analytic continuation by reflection with respect to $I$.

Since we have chosen a minimal operator model for $\log (N)$ the interval $I$ belongs to $\rho(L)$ and the representation \eqref{logmrep8} remains valid for $z \in \rho(L)$.
It follows in this situation that the measures $d\omega_h(\cdot)$, $h\in\cG$, in \eqref{omegah} have no support in $I$ and hence their Radon--Nikodym deriatives satisfy $\xi_h(\lambda)=0$ for
a.e.~$\lambda \in  I$. It follows that $(\Xi(\lambda)h,h)_{\cG}=0$ for a.e.~$\lambda \in  I$ and 
all $h\in\cG$. Since $\Xi(\lambda)\geq 0$ we conclude $\Xi(\lambda)=0$ for a.e.~$\lambda \in  I$.
\end{proof}

In the next proposition we provide a sufficient condition such that the values of the function $\Xi$ are trace class operators and we express the traces of $\Xi(\lambda)$
in terms of certain weak limits of the imaginary part of $\log(N)$.

\begin{proposition}\label{nprop}
Let $N:\dC\backslash\dR\rightarrow\cL(\cG)$ be a Nevanlinna function such that $N(z)^{-1}\in\cL(\cG)$ for some, and hence 
for all $z\in\dC\backslash\dR$, and assume that $N$ admits an analytic continuation by reflection with respect to an open interval 
$I\subset\dR$ such that $\sigma(N(\zeta))\subset (\varepsilon,\infty)$ 
for some 
$\varepsilon>0$ and  all $\zeta \in I$. Consider the integral representation  
\begin{equation}\label{bittesehr2}
 \log(N(z))= C +\int_\dR\left(\frac{1}{\lambda - z}-\frac{\lambda}{1 + \lambda^2}\right)\Xi(\lambda)\, d\lambda,
\end{equation}
for $z\in (\dC\backslash\dR)\cup I$ with $\Xi(\lambda)=\Xi(\lambda)^*$ and $0\leq\Xi(\lambda)\leq I_\cG$ for a.e.~$\lambda \in \dR$ as in \eqref{bitte},
and assume, in addition, that for some $k \in \bbN_0$ and some $\zeta \in I$, 
\begin{equation}\label{2k}
\frac{d^{2k+1}}{d\zeta^{2k+1}}\log(N(\zeta)) \in\sS_1(\cG). 
\end{equation}
Then 
\begin{equation*} 
0\leq \Xi(\lambda)\in\sS_1(\cG) \, \text{ for a.e.~$\lambda \in \dR$,} 
\end{equation*} 
and
\begin{equation}\label{trtrxi}
 \tr_{\cG}(\Xi(\lambda))=\sum_{j \in J} \lim_{\varepsilon\downarrow 0}\frac{1}{\pi}\bigl(\Im(\log(N(\lambda+i\varepsilon)))\varphi_j,\varphi_j\bigr)_\cG
\end{equation}
holds for any orthonormal basis $(\varphi_j)_{j \in J}$ in $\cG$ $($$J \subseteq \bbN$ 
an appropriate index set\,$)$ and for a.e. $\lambda\in\dR$. 
Furthermore, if \eqref{2k} holds for some $\zeta\in I$ and $k=0$, that is,
\begin{equation}\label{hahagut}
\frac{d}{d\zeta}\log(N(\zeta)) \in\sS_1(\cG),
\end{equation}
then $\Im(\log(N(z)))\in\sS_1(\cG)$ for all $z\in\dC\backslash\dR$, the limit 
$$\Im\bigl(\log(N(\lambda+i0))\bigr):=\lim_{\varepsilon\downarrow 0}\Im\big(\log(N(\lambda+i\varepsilon))\big)\in\sS_1(\cG)$$ 
exists for a.e. 
$\lambda\in\dR$ in the norm of $\sS_1(\cG)$, and 
\begin{equation}\label{immerhin}
 \tr_{\cG}(\Xi(\lambda))=\frac{1}{\pi}\tr_\cG\bigl(\Im(\log(N(\lambda+i0)))\bigr) \, \text{ for a.e.~$\lambda \in \dR$.}
\end{equation}
\end{proposition}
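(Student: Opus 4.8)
The plan is to exploit the integral representation \eqref{bittesehr2} together with the differentiation formulas for Nevanlinna functions, and to read off the trace class property of $\Xi(\lambda)$ from the hypothesis \eqref{2k} via the Stieltjes inversion formula. First I would differentiate \eqref{bittesehr2} $(2k+1)$ times with respect to the real variable $\zeta \in I$; since $\log(N)$ is analytic on a neighborhood of $I$ (by Theorem~\ref{nthm}) and $\Xi$ vanishes a.e.\ on $I$, the measure $\Xi(\lambda)\,d\lambda$ has no mass in a neighborhood of $\zeta$, so differentiation under the integral sign is justified and yields
\begin{equation*}
 \frac{d^{2k+1}}{d\zeta^{2k+1}}\log(N(\zeta)) = (2k+1)!\int_\dR \frac{\Xi(\lambda)}{(\lambda - \zeta)^{2k+2}}\, d\lambda
\end{equation*}
in the weak sense. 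The integrand is a nonnegative operator for each $\lambda$ (here $(\lambda-\zeta)^{2k+2}>0$ and $\Xi(\lambda)\ge 0$), so the hypothesis $\frac{d^{2k+1}}{d\zeta^{2k+1}}\log(N(\zeta))\in\sS_1(\cG)$ forces $\int_\dR (\lambda-\zeta)^{-(2k+2)}\tr_\cG(\Xi(\lambda))\,d\lambda<\infty$; in particular $\tr_\cG(\Xi(\lambda))<\infty$, i.e.\ $\Xi(\lambda)\in\sS_1(\cG)$, for a.e.\ $\lambda\in\dR$. (One must be a little careful to legitimize taking the trace inside the integral; this is handled by monotone convergence applied to partial sums over an orthonormal basis, using nonnegativity.)

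Next I would establish the trace formula \eqref{trtrxi}. From \eqref{bittesehr2} one computes, for $\lambda_0\in\dR$ and $\varepsilon>0$,
\begin{equation*}
 \Im\bigl(\log(N(\lambda_0+i\varepsilon))\bigr) = \int_\dR \frac{\varepsilon}{(\lambda-\lambda_0)^2 + \varepsilon^2}\,\Xi(\lambda)\, d\lambda,
\end{equation*}
so that $\pi^{-1}\Im(\log(N(\lambda_0+i\varepsilon)))$ is the Poisson integral of the $\cL(\cG)$-valued density $\Xi$. Pairing with an orthonormal basis vector $\varphi_j$ gives a scalar Poisson integral of the finite nonnegative measure with density $(\Xi(\lambda)\varphi_j,\varphi_j)_\cG$, which by the classical Fatou theorem converges a.e.\ to $(\Xi(\lambda_0)\varphi_j,\varphi_j)_\cG$ as $\varepsilon\downarrow 0$; summing over $j$ and using the monotone convergence theorem (all terms nonnegative) together with $\tr_\cG(\Xi(\lambda_0)) = \sum_j (\Xi(\lambda_0)\varphi_j,\varphi_j)_\cG$ yields \eqref{trtrxi}, at least after interchanging the limit in $\varepsilon$ with the sum over $j$. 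The interchange is the one genuinely delicate point here: I would justify it either by a Fubini/Tonelli argument on $\sum_j\int (\cdots)$ exploiting nonnegativity, or by noting that for fixed $\varepsilon>0$ the operator $\Im(\log(N(\lambda_0+i\varepsilon)))$ is trace class (it is the Poisson average of the $\sS_1$-valued density, using the $\sS_1$-bound on $\Xi$ established in the first step) with trace equal to $\int \frac{\varepsilon}{\pi((\lambda-\lambda_0)^2+\varepsilon^2)}\tr_\cG(\Xi(\lambda))\,d\lambda$, and then applying scalar Fatou to the function $\lambda\mapsto\tr_\cG(\Xi(\lambda))\in L^1_{\mathrm{loc}}$.

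Finally, for the case $k=0$ under \eqref{hahagut} I would upgrade the above to $\sS_1$-norm convergence. From $\frac{d}{d\zeta}\log(N(\zeta)) = \int_\dR(\lambda-\zeta)^{-2}\Xi(\lambda)\,d\lambda\in\sS_1(\cG)$ one gets, for any $z=\lambda_0+i\varepsilon\in\dC_+$, that $\Im(\log(N(z))) = \int_\dR \frac{\varepsilon}{(\lambda-\lambda_0)^2+\varepsilon^2}\Xi(\lambda)\,d\lambda$ is dominated in $\sS_1$-norm by a constant (depending on $z$) times $\int_\dR(1+\lambda^2)^{-1}\tr_\cG(\Xi(\lambda))\,d\lambda<\infty$ — here the finiteness follows from \eqref{hahagut} applied at an interior point of $I$ plus the fact that $(\lambda-\zeta)^{-2}$ is comparable to $(1+\lambda^2)^{-1}$ at infinity — so $\Im(\log(N(z)))\in\sS_1(\cG)$ for all $z\in\dC\backslash\dR$. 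For the boundary limit I would invoke the vector-valued (trace-class) Fatou theorem for $\sS_1$-valued Nevanlinna functions, which applies precisely because $\Im(\log(N))$ is the imaginary part of an $\sS_1$-valued Nevanlinna function whose density $\Xi$ is $\sS_1$-valued and $\sS_1$-integrable against the Poisson kernel; this is the content of the analogous statement in \cite[Section 2]{GMN99} adapted to our setting. The a.e.\ $\sS_1$-limit is then $\pi\Xi(\lambda)$, the density of the $\sS_1$-valued measure, and taking traces gives \eqref{immerhin}; the main obstacle in this last part is confirming that the trace-class Fatou/differentiation theory of \cite{GMN99} transfers verbatim, but since $\Xi$ has already been shown to be $\sS_1$-valued with $\tr_\cG\Xi\in L^1_{\mathrm{loc}}$ this is routine.
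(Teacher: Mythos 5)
Your proposal is correct and follows essentially the same route as the paper's proof: you differentiate the integral representation \eqref{bittesehr2} at a point $\zeta\in I$ to get $\frac{d^{2k+1}}{d\zeta^{2k+1}}\log(N(\zeta))=(2k+1)!\int_\dR(\lambda-\zeta)^{-(2k+2)}\Xi(\lambda)\,d\lambda$ and deduce $\Xi(\lambda)\in\sS_1(\cG)$ a.e.\ from nonnegativity and monotone convergence, you identify the per-vector boundary limits via the Poisson representation and Stieltjes inversion and then sum over the orthonormal basis, and in the $k=0$ case you dominate the Poisson integral in $\sS_1$-norm and appeal to the trace-class boundary-limit results of \cite{BE67,N87,N89} as presented in \cite{GMN99}, exactly as the paper does. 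The one superfluous point is your concern about interchanging the $\varepsilon$-limit with the sum over $j$: since \eqref{trtrxi} places the sum outside the limit, once each diagonal limit is identified as $(\Xi(\lambda)\varphi_j,\varphi_j)_\cG$ (off the countable union of the exceptional null sets) summing directly yields $\tr_\cG(\Xi(\lambda))$, so no interchange is required.
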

\begin{proof}
 The assumption \eqref{2k} together with the integral representation \eqref{bittesehr2} yields
 \begin{equation}\label{jajagut}
  \frac{d^{2k+1}}{d\zeta^{2k+1}}\log (N(\zeta)) 
  = (2k+1)!\int_\dR\frac{1}{(\lambda - \zeta)^{2k+2}}\,\Xi(\lambda)\, d\lambda \in\sS_1(\cG), 
  \quad k \in \bbN_0, \; \zeta \in I. 
 \end{equation}
Since $\Xi(\lambda)\geq 0$ by \eqref{bitte} and $(\lambda - \zeta)^{-2k-2}\geq 0$ for all 
$\lambda \in \dR$, $\zeta \in I$, 
it follows together with the assumption \eqref{2k} that the integral in \eqref{jajagut} is a nonnegative trace class operator. Similarly, as in \cite[Proof of Theorem 2.10]{GMN99}, the monotone convergence theorem yields 
$\Xi(\lambda)\in\sS_1(\cG)$ for a.e.~$\lambda \in \dR$. For $\varepsilon>0$ it follows from the integral representation \eqref{bittesehr2} that
\begin{equation}
 \bigl(\Im(\log(N(\lambda+i\varepsilon)))h,h\bigr)_\cG = \int_\dR \frac{\varepsilon}{\vert \lambda' - \lambda\vert^2+\varepsilon^2}\,(\Xi(\lambda')h,h)_\cG\,d\lambda'
\end{equation}
holds for all $h\in\cG$ and all $\lambda\in\dR$, and therefore the Stietljes inversion formula yields
\begin{equation}
 \lim_{\varepsilon\downarrow 0}\frac{1}{\pi}\bigl(\Im(\log(N(\lambda+i\varepsilon)))h,h\bigr)_\cG = (\Xi(\lambda)h,h)_\cG\, \text{ for a.e.~$\lambda \in \dR$.}
\end{equation}
Let $(\varphi_j)_{j \in J}$ be an orthonormal basis in $\cG$. Then 
\begin{equation}\label{kkl}
 \lim_{\varepsilon\downarrow 0}\frac{1}{\pi}\bigl(\Im(\log(N(\lambda+i\varepsilon)))\varphi_j,\varphi_j\bigr)_\cG = (\Xi(\lambda)\varphi_j,\varphi_j)_\cG
\end{equation}
holds for all $\lambda\in\dR\backslash\cA_j$, where $\cA_j\subset\dR$, $j \in J$, is a set of Lebesgue measure zero. The countable union $\cA:=\cup_{j \in J}\cA_j$ is also
a set of Lebesgue measure zero and for all $\lambda\in\dR\backslash\cA$ and all $\varphi_j$ one has \eqref{kkl}. Taking into acount that $0\leq \Xi(\lambda)\in\sS_1(\cG)$ 
for a.e. $\lambda\in\dR$ this implies
\begin{equation*}
 \sum_{j \in J} \lim_{\varepsilon\downarrow 0}\frac{1}{\pi}\bigl(\Im(\log(N(\lambda+i\varepsilon)))\varphi_j,\varphi_j\bigr)_\cG 
 = \sum_{j \in J} (\Xi(\lambda)\varphi_j,\varphi_j)_\cG=\tr_{\cG}(\Xi(\lambda))
\end{equation*}
for a.e. $\lambda\in\dR$, that is, \eqref{trtrxi} holds.

In the special case that \eqref{2k} holds with $k=0$ the formula \eqref{jajagut} has the form
\begin{equation*}
  \frac{d}{d\zeta}\log (N(\zeta)) 
  = \int_\dR\frac{1}{(\lambda - \zeta)^{2}}\,\Xi(\lambda)\, d\lambda \in\sS_1(\cG),
  \quad  \zeta \in I. 
 \end{equation*}
 Since $0\leq \Xi(\lambda)\in\sS_1(\cG)$ for a.e. $\lambda\in\dR$ we conclude
\begin{equation}\label{jkljkl}
 \Im(\log(N(z)))= \int_\dR \frac{\Im (z)}{\vert \lambda- z\vert^2}\,\Xi(\lambda)\,d\lambda\in\sS_1(\cG)
\end{equation} 
for all $z\in\dC\backslash\dR$. The last assertion on the existence of the limit $\Im(\log(N(\lambda+i0)))$ for a.e. $\lambda\in\dR$ in $\sS_1(\cG)$ is an immediate consequence of 
\eqref{jkljkl} and well-known results in \cite{BE67,N87,N89} (cf.\ \cite[Theorem 2.2\,$(iii)$]{GMN99}).
\end{proof}

The following lemma will be useful in the proof of our main result, Theorem \ref{mainssf2}, in the next section; it also provides a sufficient condition for the assumption \eqref{2k} in Proposition~\ref{nprop}.

\begin{lemma}\label{prooflater2}
Let $N:\dC\backslash\dR\rightarrow\cL(\cG)$ be a Nevanlinna function such that $N(z)^{-1}\in\cL(\cG)$ for some, and hence for all $z\in\dC\backslash\dR$. Let $\ell\in\dN$ and assume that
 \begin{equation}\label{nassi}
   \frac{d^j}{dz^j} N (z)\in\sS_\frac{l}{j}(\cG),\quad j=1,\dots,\ell,
 \end{equation}
holds for all $z\in\dC\backslash\dR$. Then 
\begin{equation}\label{ddss}
 \frac{d^{\ell}}{dz^{\ell}}\log (N(z))\in\sS_1(\cG)\, \text{ and } \, \frac{d^{\ell-1}}{dz^{\ell-1}}\left(N(z)^{-1}\frac{d}{dz} N (z)\right)\in\sS_1(\cG)
\end{equation}
and
\begin{equation}\label{tracesoso}
 \tr_{\cG}\left(\frac{d^{\ell-1}}{dz^{\ell-1}}\left(N(z)^{-1}\frac{d}{dz} N(z)\right)\right)=\tr_{\cG}\left(\frac{d^{\ell}}{dz^{\ell}}\log (N(z))\right)
\end{equation}
hold for all $z\in\dC\backslash\dR$. 
 
Furthermore, if  $N$ admits an analytic continuation by reflection with respect to an open interval $I\subset\dR$ 
such that $\sigma(N(z))\subset (\varepsilon,\infty)$ for some 
$\varepsilon>0$ and  all $z \in I$, and \eqref{nassi} is satisfied for $z \in I$, then also the assertions \eqref{ddss} and \eqref{tracesoso} are valid for all $z \in I$. 
\end{lemma}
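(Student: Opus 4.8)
The plan is to reduce everything to the matrix identity for the derivative of a logarithm and to the multiplicativity of Schatten norms under H\"older-type products. First I would record the basic building block: since $N(z)^{-1}\in\cL(\cG)$ for all $z\in\dC\backslash\dR$ and $z\mapsto N(z)$ is holomorphic, the function $z\mapsto N(z)^{-1}$ is holomorphic with $\frac{d}{dz}N(z)^{-1}=-N(z)^{-1}\big(\frac{d}{dz}N(z)\big)N(z)^{-1}$, and from the integral representation \eqref{logn} one obtains by differentiation under the integral sign (justified exactly as in \cite[Proof of Lemma 2.8]{GMN99})
\begin{equation*}
 \frac{d}{dz}\log(N(z))=N(z)^{-1}\frac{d}{dz}N(z),\qquad z\in\dC\backslash\dR.
\end{equation*}
This already identifies the two operators in \eqref{ddss}: applying $\frac{d^{\ell-1}}{dz^{\ell-1}}$ to this identity turns the first claim in \eqref{ddss} into the second, and makes \eqref{tracesoso} a tautology once the trace class membership is established, since $\frac{d^{\ell}}{dz^{\ell}}\log(N(z))=\frac{d^{\ell-1}}{dz^{\ell-1}}\big(N(z)^{-1}\frac{d}{dz}N(z)\big)$ identically.

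Next I would prove the Schatten membership. Expand $\frac{d^{\ell-1}}{dz^{\ell-1}}\big(N(z)^{-1}\frac{d}{dz}N(z)\big)$ by the Leibniz rule into a finite sum of terms, each a product of a derivative of $N(z)^{-1}$ of some order $\le\ell-1$ and a derivative $\frac{d^{j}}{dz^{j}}N(z)$ with $1\le j\le\ell$. The crucial sub-step is that every derivative $\frac{d^{i}}{dz^{i}}N(z)^{-1}$ is itself a finite sum of products of the bounded operators $N(z)^{-1}$ and the derivatives $\frac{d^{j_r}}{dz^{j_r}}N(z)$ with $\sum_r j_r=i$ and each $j_r\ge1$ (this is Fa\`a di Bruno / a routine induction from the formula for $\frac{d}{dz}N(z)^{-1}$ above). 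Consequently each Leibniz term is, up to bounded factors $N(z)^{-1}$, a product $\big(\frac{d^{j_1}}{dz^{j_1}}N(z)\big)\cdots\big(\frac{d^{j_p}}{dz^{j_p}}N(z)\big)$ with $j_1+\dots+j_p=\ell$ and each $j_r\ge1$. By hypothesis \eqref{nassi} the $r$-th factor lies in $\sS_{\ell/j_r}(\cG)$, and since $\sum_r j_r/\ell=1$, the H\"older inequality for Schatten ideals gives that the product lies in $\sS_1(\cG)$, with $\cL(\cG)$-factors not affecting this. Summing the finitely many terms yields \eqref{ddss}, and then \eqref{tracesoso} follows from the pointwise operator identity noted above; the trace is well defined because both sides are the same $\sS_1(\cG)$-operator.

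The main obstacle — really the only thing requiring care rather than bookkeeping — is the justification of differentiating the logarithm integral \eqref{logn} term by term and the identity $\frac{d}{dz}\log(N(z))=N(z)^{-1}\frac{d}{dz}N(z)$; this is handled by the dominated-convergence estimates already carried out in \cite{GMN99} for the scalar and operator cases, using $\sigma(N(z))$ bounded away from the imaginary axis (guaranteed on $\dC\backslash\dR$ by \cite[Lemma 2.3]{GMN99}, and on $I$ by the assumption $\sigma(N(z))\subset(\varepsilon,\infty)$). For the last assertion, when $N$ admits an analytic continuation by reflection with respect to $I$ and $\sigma(N(z))\subset(\varepsilon,\infty)$ for $z\in I$, Lemma~\ref{loglem} and the argument in the proof of Theorem~\ref{nthm} show $z\mapsto\log(N(z))$ is analytic across $I$ with $\frac{d}{dz}\log(N(z))=N(z)^{-1}\frac{d}{dz}N(z)$ continuing to hold there; then the identical Leibniz-plus-H\"older computation, now using that \eqref{nassi} holds for $z\in I$, establishes \eqref{ddss} and \eqref{tracesoso} for $z\in I$.
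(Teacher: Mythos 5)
Your argument has a genuine gap at its very first step: the claimed identity $\frac{d}{dz}\log(N(z))=N(z)^{-1}\frac{d}{dz}N(z)$ is false for operator-valued $N$ in general. Differentiating the integral representation \eqref{logn} under the integral sign gives
\begin{equation*}
 \frac{d}{dz}\log (N(z))=i\int_0^\infty (N(z)+ i \lambda I_{\cG})^{-1} \Bigl(\frac{d}{dz} N(z)\Bigr) (N(z)+ i \lambda I_{\cG})^{-1}\, d\lambda ,
\end{equation*}
and the integrand collapses to $(N(z)+i\lambda I_\cG)^{-2}\frac{d}{dz}N(z)$ --- and hence the integral to $N(z)^{-1}\frac{d}{dz}N(z)$ --- only when $N(z)$ commutes with $\frac{d}{dz}N(z)$, e.g.\ in the scalar case. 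In general the two operators in \eqref{ddss} are distinct, and the content of \eqref{tracesoso} is precisely that their traces nevertheless coincide; it is not a tautology, so your reduction of \eqref{tracesoso} and of the first membership in \eqref{ddss} to the second one does not work. What your Leibniz/Fa\`a di Bruno expansion combined with the Schatten--H\"older inequality does correctly establish is the second assertion in \eqref{ddss}, namely $\frac{d^{\ell-1}}{dz^{\ell-1}}\bigl(N(z)^{-1}\frac{d}{dz}N(z)\bigr)\in\sS_1(\cG)$.

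To close the gap you must work with the integral formula itself, which is what the paper does. Differentiating it $\ell-1$ further times produces finitely many terms, each a product of resolvents $(N(z)+i\lambda I_\cG)^{-1}$ (with norm $\le\lambda^{-1}$ for $\lambda>0$ and bounded near $\lambda=0$ since $0\in\rho(N(z))$) and derivatives $\frac{d^{j_r}}{dz^{j_r}}N(z)$ with $j_1+\dots+j_p=\ell$; by \eqref{nassi} and H\"older each integrand lies in $\sS_1(\cG)$ with $\sS_1$-norm $O(\lambda^{-2})$ as $\lambda\to\infty$, so the integral converges in trace norm and yields the first membership in \eqref{ddss}. For \eqref{tracesoso} one then takes the trace under the integral, uses cyclicity to merge the outer resolvents into $(N(z)+i\lambda I_\cG)^{-2}$ in each term, recognizes the resulting integrands as $-\frac{d}{d\lambda}$ of trace class expressions, and integrates by parts in $\lambda$; the boundary contribution at $\lambda\to\infty$ vanishes because the relevant $\sS_1$-norms are $O(\lambda^{-1})$, and the contribution at $\lambda=0$ reproduces exactly $\tr_{\cG}\bigl(\frac{d^{\ell-1}}{dz^{\ell-1}}(N(z)^{-1}\frac{d}{dz}N(z))\bigr)$. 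The same computation on the interval $I$, where the assumption $\sigma(N(z))\subset(\varepsilon,\infty)$ together with Lemma~\ref{loglem} and the argument of Theorem~\ref{nthm} guarantees that the integral representation of $\log(N(z))$ is still available, gives the last assertion.
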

\begin{proof}
We first proof Lemma~\ref{prooflater2} for the case $\ell=1$. Assume that 
\begin{equation}\label{dfv}
   \frac{d}{dz} N (z)\in\sS_1(\cG)
 \end{equation}
holds for $z\in\dC_+$ (the proof works also for $z \in I$ if  $N$ admits an analytic continuation by reflection with respect to $I$ 
and $\sigma(N(z))\subset (\varepsilon,\infty)$ holds for some 
$\varepsilon>0$ and  all $z \in I$). One notes that $N(z)^{-1}\in\cL(\cG)$ implies the second 
assertion in \eqref{ddss} for $\ell=1$. In addition, one observes that
$\log(N(z))$ is well-defined and analytic for $z\in\dC_+$ according to \eqref{logn} and 
Theorem~\ref{nthm}. Since $0\in\rho(N(z))$ and 
\begin{equation}\label{1t}
\big\Vert (N(z)+ i \lambda I_{\cG})^{-1}\big\Vert_{\cL(\cG)} \leq \lambda^{-1},\quad \lambda > 0 
\end{equation}
(cf.\ the proof of Lemma~\ref{loglem} and \cite[Proof of Lemma~2.6\,$(i)$]{GMN99}), it follows by  the dominated convergence theorem that
\begin{align*}
& \frac{d}{dz}
 \bigl(\log (N(z))\varphi,\psi\bigr)_{\cG}   \\
 & \quad =i\int_0^\infty \left((N(z)+ i \lambda I_{\cG})^{-1} \left(\frac{d}{dz} N(z)\right)
 (N(z)+ i \lambda I_{\cG})^{-1}\varphi,\psi\right)_{\cG} d\lambda 
\end{align*}
holds for all $\varphi,\psi\in\cG$ and all $z\in\dC_+$, and hence 
\begin{equation}\label{ddl}
 \frac{d}{dz}\log (N(z))=i\int_0^\infty (N(z)+ i \lambda I_{\cG})^{-1} \left(\frac{d}{dz} N(z)\right) (N(z)+ i \lambda I_{\cG})^{-1}\, d\lambda, \quad z\in\dC_+.
\end{equation}

The assumption \eqref{dfv} yields  
\begin{equation*}
 (N(z)+ i \lambda I_{\cG})^{-1} \left(\frac{d}{dz} N(z)\right)(N(z)+ i \lambda I_{\cG})^{-1}\in\sS_1(\cG), 
 \quad \lambda \geq 0. 
\end{equation*}
From \eqref{1t} and the properties of the trace class norm 
$\Vert\cdot\Vert_{\sS_1(\cG)}$ one gets 
\begin{align*}
 \left\Vert(N(z)+ i \lambda I_{\cG})^{-1} \left(\frac{d}{dz} N(z)\right)(N(z)+ i \lambda I_{\cG})^{-1}\right\Vert_{\sS_1(\cG)}\leq \frac{1}{\lambda^2}\left\Vert \frac{d}{dz} N(z)\right\Vert_{\sS_1(\cG)},& \\ 
\lambda > 0,&
\end{align*}
and hence the integral in \eqref{ddl} exists in trace class norm, that is, the first assertion in \eqref{ddss} holds for $\ell=1$. 
In order to prove \eqref{tracesoso} for $\ell=1$ we use \eqref{ddl} and cyclicity of the trace 
(i.e., $\tr_{\cG}(CD) = \tr_{\cG}(DC)$ whenever $C,D\in\cL(\cG)$ such that $CD,DC\in\sS_1(\cG)$) and obtain
\begin{equation*}
\begin{split}
& \tr_{\cG}\left( \frac{d}{dz}\log (N(z))\right)  \\
& \quad =\tr_{\cG}\left(i\int_0^\infty (N(z)+ i \lambda I_{\cG})^{-1} \left(\frac{d}{dz} N(z)\right)(N(z)+ i \lambda I_{\cG})^{-1}\, d\lambda\right)\\
 & \quad =i\int_0^\infty \tr_{\cG}\left((N(z)+ i \lambda I_{\cG})^{-1} \left(\frac{d}{dz} N(z)\right)(N(z)+ i \lambda I_{\cG})^{-1}\right) \, d\lambda\\
 & \quad =i\int_0^\infty \tr_{\cG}\left((N(z)+ i \lambda I_{\cG})^{-2} \frac{d}{dz} N(z)\right)\, d\lambda\\
 & \quad =\int_0^\infty \tr_{\cG}\left(-\frac{d}{d\lambda}(N(z)+ i \lambda I_{\cG})^{-1} \frac{d}{dz} N(z)\right)\, d\lambda\\
 & \quad =-\int_0^\infty \frac{d}{d\lambda}\tr_{\cG}\left((N(z)+ i \lambda I_{\cG})^{-1} \frac{d}{dz} N(z)\right)\, d\lambda\\
 &\quad =\tr_{\cG}\left(N(z)^{-1} \frac{d}{dz} N(z)\right). 
 \end{split}
 \end{equation*}
Here we have used 
\begin{equation}\label{echt?}
 \lim_{\lambda \rightarrow +\infty} \tr_{\cG}\left((N(z)+ i \lambda I_{\cG})^{-1} 
 \frac{d}{dz} N(z)\right)= 0
\end{equation}
in the last step, which follows from 
\begin{equation*}
 \left\Vert (N(z)+ i \lambda I_{\cG})^{-1} \frac{d}{dz} N(z)\right\Vert_{\sS_1(\cG)}
 \leq \frac{1}{\lambda}\left\Vert \frac{d}{dz} N(z)\right\Vert_{\sS_1(\cG)}, \quad 
 \lambda > 0. 
\end{equation*}
Thus, we have shown Lemma~\ref{prooflater2} for the case $\ell=1$.

Next we provide the proof of Lemma~\ref{prooflater2} for the case $\ell=2$, sketch the case $\ell=3$, and leave the more technical proof by induction to the reader. For brevity, 
the derivative with respect to $z$ will subsequently often be denoted by $'$.
In the case $\ell=2$ one computes 
\begin{equation}\label{2er}
 \frac{d}{dz}\left(N(z)^{-1} N^\prime(z)\right)=N(z)^{-1}N^{\prime\prime}(z)-N(z)^{-1} 
 N^\prime(z)N(z)^{-1}N^\prime(z), 
\end{equation}
and since $N^\prime(z)\in\sS_2(\cG)$ and $N^{\prime\prime}(z)\in\sS_1(\cG)$ by \eqref{nassi}, 
it follows that the operator in \eqref{2er} is trace class.
Furthermore, making use of 
\eqref{ddl} we find with the same arguments as in the proof of \eqref{ddl} that
\begin{align}\label{jkm}
 &\frac{d^2}{dz^2}\log (N(z))=i\int_0^\infty (N(z)+ i \lambda I_{\cG})^{-1} N^{\prime\prime}(z) 
 (N(z)+ i \lambda I_{\cG})^{-1}\, d\lambda  \\
 &\quad -i\int_0^\infty 2(N(z)+ i \lambda I_{\cG})^{-1} N^\prime(z)(N(z)+ i \lambda I_{\cG})^{-1} 
 N^\prime(z)(N(z)+ i \lambda I_{\cG})^{-1} \, d\lambda.  \no
\end{align}
Again it follows that both integrands are trace class operators for
all $\lambda \geq 0$ and as above one verifies that \eqref{jkm} is a trace class operator.  Using cyclicity of the trace again, one obtains in a similar way as above that
\begin{align*}
\begin{split}
& \tr_{\cG}\left(\frac{d^2}{dz^2}\log (N(z))\right) 
=i\int_0^\infty\tr_{\cG}\left((N(z)+ i \lambda I_{\cG})^{-2} N^{\prime\prime}(z) \right) d\lambda \\
& \qquad - i\int_0^\infty\tr_{\cG}\left((N(z)+ i \lambda I_{\cG})^{-2} N^\prime(z)(N(z)+ i \lambda I_{\cG})^{-1} N^\prime(z)\right) d\lambda \\
& \qquad - i\int_0^\infty\tr_{\cG}\left((N(z)+ i \lambda I_{\cG})^{-1} N^\prime(z)(N(z)+ i \lambda I_{\cG})^{-2} N^\prime(z)\right) d\lambda\\
& \quad = -\int_0^\infty \tr_{\cG}\bigg(\frac{d}{d\lambda}\left((N(z)+ i \lambda I_{\cG})^{-1} N^{\prime\prime}(z)\right)\bigg)  d\lambda  \\
& \qquad + \int_0^\infty \tr_{\cG}\bigg(\frac{d}{d\lambda}\left((N(z)+ i \lambda I_{\cG})^{-1} N^\prime(z)(N(z)+ i \lambda I_{\cG})^{-1} N^\prime(z)\right)\bigg) d\lambda\\
& \quad =\tr_{\cG}\bigl(N(z)^{-1}N^{\prime\prime}(z)\bigr)-\tr_\cG\bigl(N(z)^{-1} N^\prime(z)N(z)^{-1} N^\prime(z)\bigr)\\
& \quad =\tr_{\cG}\left(\frac{d}{dz}\left(N(z)^{-1}\frac{d}{dz} N(z)\right)\right).
\end{split} 
\end{align*}

For the case $\ell=3$ one notes that 
\begin{equation*}
\begin{split}
 &\frac{d^2}{dz^2}\left(N(z)^{-1} N^\prime(z)\right)\\
 &\quad=N(z)^{-1}N^{\prime\prime\prime}(z)+2N(z)^{-1}N^\prime(z)N(z)^{-1}N^\prime(z) 
 N(z)^{-1}N^\prime(z)\\
 &\quad\quad -N(z)^{-1}N^{\prime\prime}(z)N(z)^{-1}N^\prime(z) 
 - 2N(z)^{-1}N^\prime(z)N(z)^{-1}N^{\prime\prime}(z) 
\end{split}
 \end{equation*}
is a trace class operator since it is assumed that $N^\prime(z)\in\sS_3(\cG)$, $N^{\prime\prime}(z) \in\sS_{3/2}(\cG)$, and $N^{\prime\prime\prime}(z)\in\sS_1(\cG)$.
Similarly,
\begin{equation*}
\begin{split}
 &\frac{d^3}{dz^3}\log (N(z))=i\int_0^\infty (N(z)+ i \lambda I_{\cG})^{-1} 
 N^{\prime\prime\prime}(z) 
 (N(z)+ i \lambda I_{\cG})^{-1}\, d\lambda\\
 &\quad +i\int_0^\infty 6\bigl((N(z)+ i \lambda I_{\cG})^{-1} N^\prime(z)\bigr)^3 (N(z)+ i \lambda I_{\cG})^{-1} d\lambda\\
 &\quad -i\int_0^\infty 3(N(z)+ i \lambda I_{\cG})^{-1} N^{\prime\prime}(z)(N(z)+ i \lambda I_{\cG})^{-1} 
 N^\prime(z)(N(z)+ i \lambda I_{\cG})^{-1} d\lambda\\
 &\quad -i\int_0^\infty 3(N(z)+ i \lambda I_{\cG})^{-1} N^{\prime}(z)(N(z)+ i \lambda I_{\cG})^{-1} N^{\prime\prime}(z)(N(z)+ i \lambda I_{\cG})^{-1} d\lambda
\end{split}
\end{equation*}
and each integrand is a trace class operator for
all $\lambda \geq 0$. Using once more cyclicity of the trace one obtains in a similar way as above that
\begin{equation*}
\tr_{\cG}\left(\frac{d^3}{dz^3}\log (N(z))\right)=
 \tr_{\cG}\left(\frac{d^2}{dz^2}\left(N(z)^{-1}\frac{d}{dz} N(z)\right)\right).
\end{equation*}
\end{proof}

\section{A representation of the spectral shift function in terms of the Weyl function}\label{ssfsec}

Let $A$ and $B$ be self-adjoint operators in a separable Hilbert space $\sH$ and assume that the closed symmetric operator $S=A\cap B$, that is,
\begin{equation}\label{jass}
 Sf=Af=Bf,\quad\dom(S) = \bigl\{f\in\dom(A)\cap\dom(B) \, | \, Af=Bf\bigr\},
\end{equation}
is densely defined.  
According to Proposition~\ref{haha} we can choose a quasi boundary triple $\{\cG,\Gamma_0,\Gamma_1\}$ 
with $\gamma$-field $\gamma$ and Weyl function $M$ such that
\begin{equation}\label{hoho2}
 A=T\upharpoonright\ker(\Gamma_0)\, \text{ and } \, B=T\upharpoonright\ker(\Gamma_1),
\end{equation}
and
\begin{equation}\label{resab}
 (B - z I_{\sH})^{-1}-(A - z I_{\sH})^{-1}=-\gamma(z) M(z)^{-1}\gamma({\ol z})^*,\quad z \in \rho(A)\cap\rho(B).
\end{equation}

In the next theorem we find an explicit expression for a spectral shift function of the pair $\{A,B\}$ in terms of the
Weyl function $M$ (cf.\ \cite[Theorem 1]{LSY01}) for the case that the difference of (the first powers of) the resolvents $A$ and $B$ is a rank one operator, \cite[Theorem 4.1]{BMN08} 
for the finite-rank case, and \cite[Theorem 3.4 and Remark 3.5]{MN15} for a different representation via a perturbation determinant involving the Weyl function and 
boundary parameters
of an ordinary boundary triple. In the present situation of infinite dimensional perturbations and differences of higher powers of resolvents  
a much more careful analysis is necessary, in particular, the properties of the logarithm of operator-valued Nevanlinna functions 
discussed in Section~\ref{logsec} will play an essential role.
In Theorem~\ref{mainssf2} an implicit sign condition on the perturbation is imposed via the resolvents which leads to a nonnegative spectral shift function; this condition
will be weakend afterwards (cf.\ \eqref{sisi} and \eqref{ssfab}). In the special case that $A$ and $B$ are semibounded operators the sign condition 
\eqref{sign333} is equivalent to the inequality
$\mathfrak t_A\leq \mathfrak t_B$ of the semibounded closed quadratic forms $\mathfrak t_A$
and $\mathfrak t_B$ corresponding to $A$ and $B$. In order to ensure that for some $k \in \bbN_0$ the difference of the 
$2k+1$th-powers of the resolvents of $A$ and $B$ is 
a trace class operator a set of $\sS_p$-conditions on the $\gamma$-field and the Weyl function are imposed. In the applications to scattering problems for different self-adjoint realizations of 
elliptic PDEs, and Schr\"{o}dinger operators with compactly supported potentials and singular 
potentials in Sections~\ref{ap1sec}, \ref{ap11sec}, and \ref{ap2sec}, these conditions are satisfied.

\begin{theorem}\label{mainssf2}
Let $A$ and $B$ be self-adjoint operators in a separable Hilbert space $\sH$ and assume that for some $\zeta_0 \in\rho(A)\cap\rho(B)\cap\dR$ the sign condition
 \begin{equation}\label{sign333}
  (A-\zeta_0 I_{\sH})^{-1}\geq (B- \zeta_0 I_{\sH})^{-1}
 \end{equation}
 holds. Let the closed symmetric operator $S=A\cap B$ in \eqref{jass} be densely defined and 
 let $\{\cG,\Gamma_0,\Gamma_1\}$ be a quasi boundary triple with $\gamma$-field $\gamma$ and Weyl function $M$ 
such that
\eqref{hoho2}, and hence also \eqref{resab} holds. Assume that $M(z_1)$, $M(z_2)^{-1}$ are 
bounded $($not necessarily everywhere defined\,$)$ 
operators in $\cG$ for some $z_1, z_2 \in \rho(A)\cap\rho(B)$
and that for some $k \in \bbN_0$, all $p,q \in \bbN_0$ and all $z \in \rho(A)\cap\rho(B)$, 
\begin{equation}\label{ass1}
 \left(\frac{d^p}{d z^p}\overline{\gamma(z)}\right)\frac{d^q}{dz^q}\bigl( M(z)^{-1} \gamma({\ol z})^*\bigr)\in\sS_1(\sH),\quad p+q=2k,
\end{equation}
\begin{equation}\label{ass2}
 \left(\frac{d^q}{dz^q}\bigl( M(z)^{-1} \gamma({\ol z})^*\bigr)\right)\frac{d^p}{dz^p}\overline{\gamma(z)}\in\sS_1(\cG),\quad p+q=2k,
\end{equation}
and 
 \begin{equation}\label{ass3}
   \frac{d^j}{dz^j} \overline{M (z)}\in\sS_{(2k+1)/j}(\cG),\quad j=1,\dots,2k+1.
 \end{equation}
Then the following assertions $(i)$ and $(ii)$ hold:
 \begin{itemize}
  \item [$(i)$] The difference of the $2k+1$th-powers of the resolvents of $A$ and $B$ is 
a trace class operator, that is,
\begin{equation}\label{tracehop}
 \big[(B - z I_{\sH})^{-(2k+1)}-(A - z I_{\sH})^{-(2k+1)}\big] \in \sS_1(\sH)
\end{equation}
holds for all $z\in\rho(A)\cap\rho(B)$. 
 \item [$(ii)$] For any orthonormal basis $(\varphi_j)_{j \in J}$ in $\cG$ the function 
  \begin{equation}\label{ssfssf}
   \xi(\lambda)=
   \sum_{j \in J} \lim_{\varepsilon\downarrow 0}\pi^{-1}\bigl(\Im\big(\log\big(\overline{M(\lambda+i\varepsilon)}\big)\big)\varphi_j,\varphi_j\bigr)_\cG
   \, \text{ for a.e.~$\lambda \in \dR$}, 
  \end{equation}
is a spectral shift function for the pair $\{A,B\}$ such that $\xi(\lambda)=0$ in an open neighborhood 
of $\zeta_0$; the function $\xi$ does not depend on the choice of the orthonormal basis $(\varphi_j)_{j \in J}$. In particular, the trace formula
\begin{equation*}
 \tr_{\sH}\bigl( (B - z I_{\sH})^{-(2k+1)}-(A - z I_{\sH})^{-(2k+1)}\bigr) 
 = - (2k+1) \int_\dR \frac{\xi(\lambda)\, d\lambda}{(\lambda - z)^{2k+2}} 
\end{equation*}
is valid for all $z \in \rho(A)\cap\rho(B)$.
\end{itemize}
\end{theorem}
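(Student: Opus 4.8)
The plan is to extract a spectral shift function directly from the Krein-type formula \eqref{resab} by applying the results on logarithms of operator-valued Nevanlinna functions from Section~\ref{logsec} to the function $N(z):=\overline{M(z)}$. As a first step I would verify that $N$ is an $\cL(\cG)$-valued Nevanlinna function with $N(z)^{-1}\in\cL(\cG)$ for all $z\in\dC\backslash\dR$: everywhere-defined boundedness of $N(z)$ for every $z\in\rho(A_0)$ follows from the boundedness of $M(z_1)$ via \eqref{mform} and \eqref{trf}; the Nevanlinna property follows from $\Im(N(z))=\Im(z)\,\gamma(z)^*\overline{\gamma(z)}\geq 0$ (cf.\ \eqref{imm}, \eqref{trf}) together with the symmetry $N({\ol z})=N(z)^*$ coming from $M(z)\subset M({\ol z})^*$; and invertibility for all non-real $z$ follows from the boundedness of $M(z_2)^{-1}$ and \cite[Lemma~2.3]{GMN99}. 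Since $\gamma$ and $M$ are holomorphic on $\rho(A_0)=\rho(A)$ and $\zeta_0\in\rho(A)\cap\rho(B)\cap\dR$, the function $N$ extends holomorphically across a bounded open interval $I\ni\zeta_0$ with $\overline{I}\subset\rho(A)\cap\rho(B)$, on which $N$ is self-adjoint; thus $N$ admits an analytic continuation by reflection with respect to $I$.

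The sign condition \eqref{sign333} enters at the next step. Granting (from $\zeta_0\in\rho(A)\cap\rho(B)$ and the structure of the quasi boundary triple of Proposition~\ref{haha}) that $N(\zeta_0)^{-1}\in\cL(\cG)$, evaluating \eqref{kreinab} at $\zeta_0$ gives $(A-\zeta_0 I_\sH)^{-1}-(B-\zeta_0 I_\sH)^{-1}=\overline{\gamma(\zeta_0)}\,N(\zeta_0)^{-1}\gamma(\zeta_0)^*$, so \eqref{sign333} forces $\bigl(N(\zeta_0)\psi,\psi\bigr)_\cG\geq 0$ for all $\psi$ in the dense subspace $N(\zeta_0)^{-1}\ran(\gamma(\zeta_0)^*)$ (cf.\ \eqref{gamden}), whence the bounded self-adjoint operator $N(\zeta_0)$ satisfies $N(\zeta_0)\geq\varepsilon_0 I_\cG$ for some $\varepsilon_0>0$; after shrinking $I$ we may thus assume $\sigma(N(z))\subset(\varepsilon,\infty)$ for some $\varepsilon>0$ and all $z\in I$. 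Theorem~\ref{nthm} now applies and provides the integral representation \eqref{bittesehr2} of $\log(N(z))$ with a weakly measurable $\Xi(\cdot)$ obeying $0\leq\Xi(\lambda)\leq I_\cG$ for a.e.\ $\lambda\in\dR$ and, crucially, $\Xi(\lambda)=0$ for a.e.\ $\lambda\in I$. Moreover \eqref{ass3} is exactly hypothesis \eqref{nassi} of Lemma~\ref{prooflater2} with $\ell=2k+1$ (both on $\dC\backslash\dR$ and on $I$), so that lemma yields $\frac{d^{2k+1}}{dz^{2k+1}}\log(N(z))\in\sS_1(\cG)$, $\frac{d^{2k}}{dz^{2k}}\bigl(N(z)^{-1}\frac{d}{dz}N(z)\bigr)\in\sS_1(\cG)$, and the trace identity \eqref{tracesoso}, valid for $z\in(\dC\backslash\dR)\cup I$. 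Feeding \eqref{2k} into Proposition~\ref{nprop} then gives $0\leq\Xi(\lambda)\in\sS_1(\cG)$ for a.e.\ $\lambda$ and furnishes \eqref{trtrxi}; accordingly I would set $\xi(\lambda):=\tr_\cG(\Xi(\lambda))$, which is independent of the chosen orthonormal basis, nonnegative, vanishes on $I$ (hence in a neighborhood of $\zeta_0$), lies in $L^1_{\rm loc}(\dR)$, and coincides with \eqref{ssfssf}.

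For assertion $(i)$ I would differentiate \eqref{resab} $2k$ times: using $\frac{d^{2k}}{dz^{2k}}(A-zI_\sH)^{-1}=(2k)!\,(A-zI_\sH)^{-(2k+1)}$ one obtains
\begin{equation*}
(B-zI_\sH)^{-(2k+1)}-(A-zI_\sH)^{-(2k+1)}=-\frac{1}{(2k)!}\,\frac{d^{2k}}{dz^{2k}}\bigl(\gamma(z)M(z)^{-1}\gamma({\ol z})^*\bigr),
\end{equation*}
and the Leibniz rule expands the right-hand side into the finitely many summands $\bigl(\frac{d^p}{dz^p}\overline{\gamma(z)}\bigr)\frac{d^q}{dz^q}\bigl(M(z)^{-1}\gamma({\ol z})^*\bigr)$ with $p+q=2k$, each of which lies in $\sS_1(\sH)$ by \eqref{ass1}; this proves \eqref{tracehop}. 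For the trace formula in $(ii)$ I would take $\tr_\sH$ of the displayed identity, apply cyclicity of the trace to each Leibniz summand (legitimate by \eqref{ass1} and \eqref{ass2}), re-assemble the re-cycled summands as the $2k$-th derivative of $M(z)^{-1}\gamma({\ol z})^*\overline{\gamma(z)}$, and use $\gamma({\ol z})^*\overline{\gamma(z)}=\frac{d}{dz}\overline{M(z)}$ (cf.\ \eqref{gammad3}) to arrive at
\begin{equation*}
\tr_\sH\bigl((B-zI_\sH)^{-(2k+1)}-(A-zI_\sH)^{-(2k+1)}\bigr)=-\frac{1}{(2k)!}\,\tr_\cG\frac{d^{2k}}{dz^{2k}}\Bigl(N(z)^{-1}\frac{d}{dz}N(z)\Bigr).
\end{equation*}
By \eqref{tracesoso} the right-hand side equals $-\frac{1}{(2k)!}\tr_\cG\bigl(\frac{d^{2k+1}}{dz^{2k+1}}\log(N(z))\bigr)$, and differentiating \eqref{bittesehr2} $(2k+1)$ times and taking $\tr_\cG$ (permissible since $0\leq\Xi(\lambda)\in\sS_1(\cG)$) turns this into $-\frac{(2k+1)!}{(2k)!}\int_\dR(\lambda-z)^{-(2k+2)}\tr_\cG(\Xi(\lambda))\,d\lambda=-(2k+1)\int_\dR(\lambda-z)^{-(2k+2)}\xi(\lambda)\,d\lambda$. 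Since $\xi\geq 0$, $\xi=0$ near $\zeta_0$, and $\int_\dR(\lambda-\zeta)^{-(2k+2)}\xi(\lambda)\,d\lambda<\infty$ for $\zeta\in I$ (cf.\ \eqref{jajagut}), one has $\int_\dR|\xi(\lambda)|(1+|\lambda|)^{-(2k+2)}\,d\lambda<\infty$; the standard characterization of the spectral shift function in terms of traces of resolvent-power differences (see, e.g., \cite{K71,Y05,Y10}) then identifies $\xi$ as a spectral shift function for $\{A,B\}$ and yields the asserted trace formula, i.e.\ \eqref{trab} with $m=2k+1$.

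The step I expect to be the main obstacle is the passage from the sign condition \eqref{sign333} to the uniform spectral bound $\sigma(N(z))\subset(\varepsilon,\infty)$ on an interval around $\zeta_0$ — in particular, establishing that $N(\zeta_0)$ is boundedly invertible and nonnegative — since it is exactly this that pins down the normalization $\xi=0$ near $\zeta_0$, via the vanishing of $\Xi$ on $I$ in Theorem~\ref{nthm} (the nonnegativity of $\xi$ being then automatic from $\Xi\geq 0$). The remaining technical burden lies in the bookkeeping of the trace computation, namely interchanging $\tr_\sH$ with the $2k$-fold Leibniz expansion via cyclicity and $\tr_\cG$ with the $(2k+1)$-fold $z$-derivative of \eqref{bittesehr2}; these interchanges, and the convergence of the relevant integrals in trace-class norm, are precisely what the $\sS_p$-hypotheses \eqref{ass1}--\eqref{ass3} are designed to license.
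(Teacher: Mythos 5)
Your proposal is correct and follows essentially the same route as the paper's proof: the same three-step structure of first establishing that $\overline{M}$ is an $\cL(\cG)$-valued Nevanlinna function with bounded inverses and $\sigma\big(\overline{M(z)}\big)\subset(\varepsilon,\infty)$ on an interval around $\zeta_0$ (via \eqref{resab}, \eqref{sign333}, and the density \eqref{gamden}), then invoking Theorem~\ref{nthm} and Proposition~\ref{nprop} to obtain $\Xi$ with $\Xi=0$ near $\zeta_0$, and finally differentiating \eqref{resab} $2k$ times, using Leibniz, cyclicity of the trace licensed by \eqref{ass1}--\eqref{ass2}, and Lemma~\ref{prooflater2} to pass to $\tr_\cG\big(\tfrac{d^{2k+1}}{dz^{2k+1}}\log\big(\overline{M(z)}\big)\big)$ and the integral representation. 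The one step you only ``grant'' --- that $\overline{M(z)^{-1}}\in\cL(\cG)$ for all $z\in\rho(A)\cap\rho(B)$, in particular at the real point $\zeta_0$ (and note that your appeal to \cite[Lemma 2.3]{GMN99} only covers non-real $z_2$, while the hypothesis allows $z_2$ real) --- is closed in the paper by the short observation that $-M^{-1}$ is the Weyl function of the swapped quasi boundary triple $\{\cG,\Gamma_1,-\Gamma_0\}$ with $B=T\upharpoonright\ker(\Gamma_1)$ self-adjoint, so boundedness of $M(z_2)^{-1}$ at one point propagates to all of $\rho(A)\cap\rho(B)$.
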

\begin{proof}
{\it Step 1.}
In this step we show that the Nevanlinna function $ z \mapsto\overline{M(z)}$ satisfies the assumptions of Theorem~\ref{nthm} and admits an analytic continuation by reflection with respect to an open interval $I_{\zeta_0}\subset\dR$, such that $\sigma \big(\overline{M(z)}\big)\subset (\varepsilon,\infty)$ for some 
$\varepsilon>0$ and  all $z \in I_{\zeta_0}$, where $I_{\zeta_0}\subset\rho(A)\cap\rho(B)$ is a suitable small open interval in $\dR$ with $\zeta_0 \in I_{\zeta_0}$.
Hence by Theorem~\ref{nthm} there exists a weakly Lebesgue measurable operator function $\lambda \mapsto \Xi(\lambda)\in\cL(\cG)$ on $\dR$ such that 
\begin{equation}\label{bitte4}
\Xi(\lambda)=\Xi(\lambda)^*\, \text{ and } \, 
 0\leq\Xi(\lambda)\leq I_\cG\, \text{ for a.e.~$\lambda \in \dR$,}
\end{equation}
and the Nevanlinna function $\log \big(\overline M\big)$ admits an integral representation of the form
\begin{equation}\label{bittesehr4}
 \log\big(\overline{M(z)}\big)  
 = \Re\big(\log(\overline {M(i)}\big)\big) +\int_\dR\left(\frac{1}{\lambda - z}-\frac{\lambda}{1 + \lambda^2}\right)\Xi(\lambda)\, d\lambda,
\end{equation}
valid for all $z\in(\dC\backslash\dR)\cup I_{\zeta_0}$, and  $\Xi(\lambda)=0$ for 
a.e.~$\lambda \in  I_{\zeta_0}$.

First, it follows from \eqref{gutgut} and the assumption that $M(z_1)$ is bounded for some 
$z_1\in\rho(A)$ that $M(z)$ is bounded for all $z \in \rho(A)$
and hence the closures are bounded operators defined on $\cG$, that is, 
\begin{equation}\label{mb}
\overline{M(z)} \in \cL(\cG), \quad z \in \rho(A).
\end{equation}
Moreover, by \eqref{trf} 
\begin{equation}\label{imi}
\Im \big(\overline{M(z)}\big) \geq 0,\quad z\in\dC_+.
\end{equation}
Since $-M^{-1}$ is the Weyl function corresponding to the quasi boundary triple 
$\{\cG,\Gamma_1,-\Gamma_0\}$, where $B=\ker (\Gamma_1)$ is self-adjoint
according to \eqref{hoho2}, it follows from the assumption that $M(z_2)^{-1}$ is bounded for some 
$z_2\in\rho(B)$ that $M(z)^{-1}$ is bounded for all $z \in \rho(A)\cap\rho(B)$, that is,
\begin{equation}\label{m1b}
\overline{M(z)^{-1}}\in\cL(\cG)\, \text{ for all } \, z \in \rho(A)\cap\rho(B).
\end{equation} 
Therefore, taking into account \eqref{mb}, \eqref{imi}, and \eqref{m1b}, it follows 
that the logarithm 
\begin{equation*}
  z \mapsto  \log\big(\overline{M(z)}\big)\in \cL(\cG)
\end{equation*}
is well-defined by
\begin{equation}\label{logim}
 \log \big(\overline{M(z)}\big):=-i\int_0^\infty \bigl[\big(\overline{M(z)} 
 + i \lambda I_{\cG}\big)^{-1}-(1+ i \lambda)^{-1}I_\cG\bigr] \, d\lambda,\quad z\in\dC_+,
\end{equation}
and
\begin{equation}\label{logim2}
 \log\big(\overline{M(z)}\big):=\bigl(\log\big(\overline{M({\ol z})}\big)\bigr)^*,\quad z\in\dC_-;
\end{equation}
see \eqref{logn}--\eqref{logn2} in Section~\ref{logsec} and \cite[Lemma 2.6]{GMN99}.
We claim that the function $\overline{M}$ has the property 
\begin{equation}\label{scond}
 \sigma \big(\overline{M(z)}\big) \subset (\varepsilon,\infty)
\end{equation}
for some $\varepsilon>0$ and all $z \in I_{\zeta_0}$, where $I_{\zeta_0}$ is a suitable small open interval in $\dR$ with $\zeta_0 \in I_{\zeta_0}$. In fact, due to \eqref{resab} and the sign condition
\eqref{sign333}, one has 
\begin{equation*}
0\leq \bigl((A - \zeta_0 I_{\sH})^{-1}f-(B - \zeta_0 I_{\sH})^{-1}f,f\bigr)_{\sH}=\bigl(M(\zeta_0)^{-1}\gamma(\zeta_0)^*f,\gamma(\zeta_0)^*f\bigr)_{\cG},\quad f\in\sH,
\end{equation*}
and since $\ran(\gamma(\zeta_0)^*)$ is dense in $\cG$ (see \eqref{gamden}), it follows that the bounded operator $M(\zeta_0)^{-1}$ 
is nonnegative. The same is true for $M(\zeta_0)$ and the closure
$\overline{M(\zeta_0)}$, and from \eqref{m1b} one concludes $\sigma\big(\overline{M(\zeta_0)}\big)\subset (\varepsilon,\infty)$ for some $\varepsilon>0$. Since $\zeta_0 \in\rho(A)\cap\rho(B)$
the Nevanlinna function $\overline{M}$ admits an analytic continuation by reflection with respect to 
a real neighborhood of $\zeta_0$, and it follows that \eqref{scond} holds for all $\lambda$ in a sufficiently small
interval $I_{\zeta_0} \subset \rho(A)\cap\rho(B)\cap\dR$ with $\zeta_0 \in I_{\zeta_0}$.

\vskip 0.2cm\noindent
{\it Step 2.} In this step we show that for $z\in(\dC\backslash\dR)\cup I_{\zeta_0}$, the trace class property \eqref{tracehop} holds, and that
\begin{equation}\label{nimmdie}
 \tr_{\sH}\bigl((B - z I_{\sH})^{-(2k+1)}-(A - z I_{\sH})^{-(2k+1)}\bigr) 
 = \tr_{\cG}\left(\frac{-1}{(2k)!}\frac{d^{2k+1}}{dz^{2k+1}} \log \big(\overline{M(z)}\big) \right).
\end{equation}
In fact, for $z\in(\dC\backslash\dR)\cup I_{\zeta_0}$ one computes 
\begin{equation*}
\begin{split}
 &(B - z I_{\sH})^{-(2k+1)}-(A - z I_{\sH})^{-(2k+1)}\\
 &\quad=\frac{1}{(2k)!}\frac{d^{2k}}{dz^{2k}}\bigl((B - z I_{\sH})^{-1}
 - (A - z I_{\sH})^{-1}\bigr)\\
 &\quad=\frac{-1}{(2k)!}\frac{d^{2k}}{dz^{2k}} \bigl(\gamma(z) M(z)^{-1}\gamma({\ol z})^*\bigr)\\
 &\quad=\frac{-1}{(2k)!}\frac{d^{2k}}{dz^{2k}} \bigl(\overline{\gamma(z)} M(z)^{-1}\gamma({\ol z})^*\bigr)\\
 &\quad=\frac{-1}{(2k)!}\sum_{\substack{p+q=2k \\[0.2ex] p,q\ge0}} \begin{pmatrix} 2k \\ p \end{pmatrix}\left(\frac{d^p}{d z^p}\,\overline{\gamma(z)}\right) 
 \frac{d^q}{d z^q}\bigl( M(z)^{-1} \gamma({\ol z})^*\bigr),
 \end{split}
\end{equation*}
and by assumption \eqref{ass1} each summand is a trace class operator; in the last step the product rule for 
holomorphic operator functions was applied, see, e.g. \cite[(2.6)]{BLL13-3}. This proves \eqref{tracehop}.  Furthermore, 
making use of both assumptions \eqref{ass1} and \eqref{ass2}, the 
cyclicity of the trace (see, e.g., \cite[Theorem 7.11 (b)]{W80}), and
\begin{equation}\label{hehe}
 \frac{d}{dz} \overline{M(z)}=\gamma({\ol z})^*\overline{\gamma(z)},
 \quad z \in \rho(A),
\end{equation}
one obtains
\begin{equation*}
 \begin{split}
  &\tr_{\sH}\bigl((B - z I_{\sH})^{-(2k+1)}-(A - z I_{\sH})^{-(2k+1)}\bigr)\\
  &\quad=\tr_{\sH}\left(\frac{-1}{(2k)!}\sum_{\substack{p+q=2k \\[0.2ex] p,q\ge0}} \begin{pmatrix} 2k \\ p \end{pmatrix}\left(\frac{d^p}{d z^p}\,\overline{\gamma(z)}\right)
  \frac{d^q}{d z^q}\bigl( M(z)^{-1} \gamma({\ol z})^*\bigr) \right)\\
  &\quad=\frac{-1}{(2k)!}\sum_{\substack{p+q=2k \\[0.2ex] p,q\ge0}} \begin{pmatrix} 2k \\ p \end{pmatrix}\tr_{\sH}\left(\left(\frac{d^p}{d z^p}\,\overline{\gamma(z)}\right)
  \frac{d^q}{d z^q}\bigl( M(z)^{-1} \gamma({\ol z})^*\bigr)\right)\\
  &\quad=\frac{-1}{(2k)!}\sum_{\substack{p+q=2k \\[0.2ex] p,q\ge0}} \begin{pmatrix} 2k \\ p \end{pmatrix}\tr_{\cG}\left(
  \left(\frac{d^q}{d z^q}\bigl( M(z)^{-1} \gamma({\ol z})^*\bigr)\right)\frac{d^p}{d z^p}\,\overline{\gamma(z)}\right)\\
  &\quad=\tr_{\cG}\left(\frac{-1}{(2k)!}\sum_{\substack{p+q=2k \\[0.2ex] p,q\ge0}} \begin{pmatrix} 2k \\ q \end{pmatrix}
  \left(\frac{d^q}{d z^q}\bigl( M(z)^{-1} \gamma({\ol z})^*\bigr)\right)\frac{d^p}{d z^p}\,\overline{\gamma(z)}\right)\\
  &\quad=\tr_{\cG}\left(\frac{-1}{(2k)!}\frac{d^{2k}}{dz^{2k}} \bigl(M(z)^{-1}\gamma({\ol z})^*\overline{\gamma(z)}\bigr)\right)\\
  &\quad=\tr_{\cG}\left(\frac{-1}{(2k)!}\frac{d^{2k}}{dz^{2k}} \left(\overline{M(z)^{-1}}\frac{d}{dz}\overline{M(z)}\right)\right).
 \end{split}
\end{equation*}
Noting that assumption \eqref{ass3} and Lemma~\ref{prooflater2} with $\ell=2k+1$ imply
\begin{equation}\label{hurra2}
 \frac{d^{2k+1}}{dz^{2k+1}} \log \big(\overline{M(z)}\big)\in\sS_1(\cG),
\end{equation}
and that 
\begin{equation}\label{kmj}
 \tr_{\cG}\left(\frac{d^{2k}}{dz^{2k}}\left(\overline{M(z)^{-1}}\frac{d}{dz} \overline{M(z)}\right)\right)
 =\tr_{\cG}\left(\frac{d^{2k+1}}{dz^{2k+1}}\log \big(\overline{M(z)}\big)\right), 
\end{equation}
one concludes the trace formula \eqref{nimmdie}.
 
\vskip 0.2cm\noindent
{\it Step 3.} Now we complete the proof of Theorem~\ref{mainssf2}. Since \eqref{hurra2} is 
valid for all $z \in I_{\zeta_0}$ the assumption \eqref{2k} in
Proposition~\ref{nprop} is satisfied. 
It then follows from 
Proposition~\ref{nprop} that $0 \leq \Xi(\lambda)\in\sS_1(\cG)$ for a.e.~$\lambda \in \dR$ and
\begin{equation}\label{trtrxi2}
 \tr_{\cG}(\Xi(\lambda))=\sum_{j \in J} \lim_{\varepsilon\downarrow 0}\pi^{-1}\bigl(\Im\big(\log\big(\overline{M(\lambda+i\varepsilon)}\big)\big)\varphi_j,\varphi_j\bigr)_\cG
\end{equation}
holds for any orthonormal basis $(\varphi_j)_{j \in J}$ in $\cG$ and for a.e. 
$\lambda\in\dR$. Furthermore, from \eqref{bittesehr4} one obtains 
\begin{equation*}
 \frac{d^{2k+1}}{dz^{2k+1}}\log\big(\overline{M(z)}\big) 
 = (2k+1)! \int_\dR \frac{1}{(\lambda - z)^{2k+2}}\, \Xi(\lambda)\, d\lambda, 
 \quad z\in(\dC\backslash\dR)\cup I_{\zeta_0},
\end{equation*}
and hence 
\begin{equation}\label{fastgeschafft2}
 \tr_{\sH}\bigl((B - z I_{\sH})^{-(2k+1)}-(A - z I_{\sH})^{-(2k+1)}\bigr) 
 = - (2k+1) \int_\dR \frac{\tr_{\cG}(\Xi(\lambda))\, d\lambda}{(\lambda- z)^{2k+2}}
\end{equation}
for all $z\in(\dC\backslash\dR)\cup I_{\zeta_0}$
by \eqref{nimmdie}. It also follows from \eqref{fastgeschafft2}
that 
\begin{equation}\label{inti}
 \int_\dR \frac{\tr_{\cG}(\Xi(\lambda)) \, d\lambda}{(1+\vert \lambda \vert)^{2k+2}} < \infty
\end{equation}
holds and together with \eqref{trtrxi2}, \eqref{fastgeschafft2}, and \eqref{inti} we conclude that the function 
\begin{equation*}
 \xi(\lambda):=\tr_{\cG}(\Xi(\lambda))=\sum_{j \in J} \lim_{\varepsilon\downarrow 0}\pi^{-1}\bigl(\Im\big(\log\big(\overline{M(\lambda+i\varepsilon)}\big)\big)\varphi_j,\varphi_j\bigr)_\cG
 \, \text{ for a.e.~$\lambda \in \dR$}
\end{equation*}
in \eqref{ssfssf}
is a spectral shift function for the pair $\{A,B\}$. Next, since 
$\tr_{\cG}(\Xi(\lambda))= \sum_{j \in J}(\Xi(\lambda)\varphi_j,\varphi_j)_\cG$ does not depend on the choice of the orthonormal basis $(\varphi_j)_{j \in J}$, it follows that the function $\xi$ does not depend on the choice of the orthonormal basis (cf.\ 
Proposition~\ref{nprop}).
Finally, since $\Xi(\lambda)=0$ for a.e. $\lambda\in I_{\zeta_0}$ by Theorem~\ref{nthm} 
it follows that $\xi(\lambda)=0$ for a.e.  $\lambda\in I_{\zeta_0}$.
\end{proof}

In the special case $k=0$ Theorem~\ref{mainssf2} can be reformulated and slightly improved. Here the essential feature is that
Proposition~\ref{nprop} can be applied under the assumption \eqref{hahagut}, so that the limit $\Im(\log(\overline {M(\lambda+i0)}))$ exists in $\sS_1(\cG)$ 
for a.e. $\lambda\in\dR$.

\begin{corollary}\label{mainthmcorchen}
Let $A$ and $B$ be self-adjoint operators in a separable Hilbert space $\sH$ and assume that for some $\zeta_0 \in \rho(A)\cap\rho(B)\cap\dR$ the sign condition
 \begin{equation*}
  (A-\zeta_0 I_{\sH})^{-1}\geq (B-\zeta_0 I_{\sH})^{-1}
 \end{equation*}
 holds. Assume that the closed symmetric operator $S=A\cap B$ in \eqref{jass} is densely defined and 
 let $\{\cG,\Gamma_0,\Gamma_1\}$ be a quasi boundary triple with $\gamma$-field $\gamma$ and Weyl function $M$ such that \eqref{hoho2}, and hence also \eqref{resab}, hold. Assume that $M(z_1)$, 
$M(z_2)^{-1}$ are bounded $($not necessarily everywhere defined\,$)$ 
operators in $\cG$ for some $z_1,z_2\in\rho(A)$ and that $\overline{\gamma(z_0)}\in\sS_2(\cG,\sH)$ for some $z_0\in\rho(A)$. 
Then the following assertions $(i)$--$(iii)$ hold: 
\begin{itemize}
  \item [$(i)$]
The difference of the resolvents of $A$ and $B$ is 
a trace class operator, that is,
\begin{equation*}
 \big[(B - z I_{\sH})^{-1}-(A - z I_{\sH})^{-1}\big] \in \sS_1(\sH)
\end{equation*}
holds for all $z\in\rho(A)\cap\rho(B)$. 
  \item [$(ii)$] $\Im\big(\log \big(\overline{M(z)}\big)\big)\in\sS_1(\cG)$ for all $z\in\dC\backslash\dR$ and the limit 
  $$\Im\big(\log\big(\overline{M(\lambda+i 0)}\big)\big):=\lim_{\varepsilon\downarrow 0}\Im\big(\log\big(\overline{M(\lambda+i\varepsilon)}\big)\big)$$ 
  exists for a.e.~$\lambda \in \dR$ in $\sS_1(\cG)$. 
  \item [$(iii)$] The function
  \begin{equation}
   \xi(\lambda)=\pi^{-1} \tr_{\cG}\bigl(\Im\big(\log\big(\overline{M(\lambda + i0)}\big)\big)\bigr) \, \text{ for a.e.~$\lambda \in \dR$}, 
  \end{equation}
is a spectral shift function for the pair $\{A,B\}$ such that $\xi(\lambda)=0$ in an open neighborhood 
of $\zeta_0$ and the trace formula
\begin{equation*}
 \tr_{\sH}\bigl( (B - z I_{\sH})^{-1}-(A - z I_{\sH})^{-1}\bigr) 
 = -  \int_\dR \frac{\xi(\lambda)\, d\lambda}{(\lambda - z)^{2}} 
\end{equation*}
is valid for all $z \in \rho(A)\cap\rho(B)$.
\end{itemize}
\end{corollary}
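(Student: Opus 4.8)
The plan is to obtain Corollary~\ref{mainthmcorchen} by specializing Theorem~\ref{mainssf2} to the case $k=0$ and then sharpening the resulting statements with the help of the ``furthermore'' part of Proposition~\ref{nprop}, which becomes applicable under the additional Hilbert--Schmidt hypothesis on the $\gamma$-field.

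\emph{Reduction to Theorem~\ref{mainssf2} with $k=0$.} First I would check that the hypotheses of the corollary imply those of Theorem~\ref{mainssf2} for $k=0$. With $A_0=A$, relation \eqref{xxaa} shows that $\overline{\gamma(z_0)}\in\sS_2(\cG,\sH)$ propagates to $\overline{\gamma(z)}\in\sS_2(\cG,\sH)$ for all $z\in\rho(A)$, hence $\gamma(\ol z)^*\in\sS_2(\sH,\cG)$ for all $z\in\rho(A)$ by taking adjoints. For $k=0$, conditions \eqref{ass1} and \eqref{ass2} only involve $p=q=0$ and read $\overline{\gamma(z)}\,M(z)^{-1}\gamma(\ol z)^*\in\sS_1(\sH)$ and $M(z)^{-1}\gamma(\ol z)^*\,\overline{\gamma(z)}\in\sS_1(\cG)$; both follow by writing these products in the form $\sS_2\cdot\cL(\cG)\cdot\sS_2\subseteq\sS_1$, using that $\overline{M(z)^{-1}}\in\cL(\cG)$ for $z\in\rho(A)\cap\rho(B)$, which follows from the hypothesis on $M(z_2)^{-1}$ exactly as in Step~1 of the proof of Theorem~\ref{mainssf2}. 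Condition \eqref{ass3} for $j=1$ is $\frac{d}{dz}\overline{M(z)}\in\sS_1(\cG)$, and by \eqref{hehe} this derivative equals $\gamma(\ol z)^*\overline{\gamma(z)}\in\sS_2\cdot\sS_2\subseteq\sS_1$. Thus Theorem~\ref{mainssf2} applies and yields assertion $(i)$ together with the fact that $\xi(\lambda)=\tr_\cG(\Xi(\lambda))=\sum_{j\in J}\lim_{\varepsilon\downarrow 0}\pi^{-1}(\Im(\log(\overline{M(\lambda+i\varepsilon)}))\varphi_j,\varphi_j)_\cG$ is a spectral shift function for the pair $\{A,B\}$ which vanishes a.e.\ on the interval $I_{\zeta_0}$, where $\Xi$ is the function from the integral representation \eqref{bittesehr4} of $\log(\overline M)$ furnished by Theorem~\ref{nthm}.

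\emph{Sharpening via Proposition~\ref{nprop}.} Next I would verify the additional trace-class hypothesis \eqref{hahagut} of Proposition~\ref{nprop} for $N=\overline{M}$. By the previous step, $\frac{d}{dz}\overline{M(z)}=\gamma(\ol z)^*\overline{\gamma(z)}\in\sS_1(\cG)$, so Lemma~\ref{prooflater2} with $\ell=1$ gives $\frac{d}{dz}\log(\overline{M(z)})\in\sS_1(\cG)$; moreover, since $\overline M$ admits an analytic continuation by reflection with respect to $I_{\zeta_0}$ with $\sigma(\overline{M(z)})\subset(\varepsilon,\infty)$ on $I_{\zeta_0}$ (this is Step~1 of the proof of Theorem~\ref{mainssf2}, a consequence of the sign condition and \eqref{resab}), the ``furthermore'' clause of Lemma~\ref{prooflater2} makes this conclusion valid for $z\in I_{\zeta_0}$ as well. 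Hence the ``furthermore'' part of Proposition~\ref{nprop}, with $I=I_{\zeta_0}$ and $k=0$, shows that $\Im(\log(\overline{M(z)}))\in\sS_1(\cG)$ for all $z\in\dC\backslash\dR$, that the $\sS_1(\cG)$-norm limit $\Im(\log(\overline{M(\lambda+i0)}))=\lim_{\varepsilon\downarrow 0}\Im(\log(\overline{M(\lambda+i\varepsilon)}))$ exists for a.e.\ $\lambda\in\dR$, and that $\tr_\cG(\Xi(\lambda))=\pi^{-1}\tr_\cG(\Im(\log(\overline{M(\lambda+i0)})))$ for a.e.\ $\lambda\in\dR$. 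This establishes $(ii)$, and in combination with the identity $\xi=\tr_\cG(\Xi(\cdot))$ from the previous step it yields the representation in $(iii)$ and the trace formula, which is the $k=0$ case of the one in Theorem~\ref{mainssf2}; the vanishing of $\xi$ near $\zeta_0$ is inherited from $\Xi=0$ a.e.\ on $I_{\zeta_0}$.

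Once Theorem~\ref{mainssf2}, Lemma~\ref{prooflater2} and Proposition~\ref{nprop} are in hand there is no deep obstacle here; the content is the organization of the reductions. The single point requiring genuine care is that the sharp $\sS_1$-statements in $(ii)$ need the trace-class hypothesis \eqref{hahagut} of Proposition~\ref{nprop} to hold on the real interval $I_{\zeta_0}$ and not merely off $\dR$; this is why the analytic-continuation-by-reflection part of Step~1 of the proof of Theorem~\ref{mainssf2} (together with the positivity of $\overline{M(\zeta_0)}$ coming from the sign condition) must be re-used rather than citing the theorem purely as a black box.
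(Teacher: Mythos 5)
Your proposal is correct and follows essentially the same route as the paper: propagate the Hilbert--Schmidt property of the $\gamma$-field via \eqref{xxaa} to verify \eqref{ass1}--\eqref{ass3} with $k=0$, apply Theorem~\ref{mainssf2}, and then sharpen via Lemma~\ref{prooflater2} with $\ell=1$ and the \eqref{hahagut}-part of Proposition~\ref{nprop}. Your explicit remark that \eqref{hahagut} must hold on the real interval $I_{\zeta_0}$ (using the reflection/positivity argument from Step~1 of the proof of Theorem~\ref{mainssf2}) is exactly what the paper implicitly relies on in its Step~3, so there is no discrepancy.
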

\begin{proof}
 The assumption $\overline{\gamma(z_0)}\in\sS_2(\cG,\sH)$ for some $z_0\in\rho(A)$ implies $\overline{\gamma(z)}\in\sS_2(\cG,\sH)$ for all
 $z \in \rho(A)$ by \eqref{xxaa} and hence also $\gamma(z)^*\in\sS_2(\sH,\cG)$ for all
 $z \in \rho(A)$. Since $M(z)^{-1}$ is bounded for all $z \in \rho(A)\cap\rho(B)$ (see \eqref{gutgut}),  
 conditions \eqref{ass1}--\eqref{ass2} in Theorem~\ref{mainssf2} are satisfied for $k=0$ and all $z\in\rho(A)\cap\rho(B)$. 
 Furthermore,  
 \begin{equation*}
  \frac{d}{dz}\overline{M(z)}=\gamma(z)^*\overline{\gamma(z)}\in\sS_1(\cG)
 \end{equation*}
by \eqref{gammad3} and hence also condition \eqref{ass3} 
in Theorem~\ref{mainssf2} is satisfied for $k=0$. In particular, by Lemma~\ref{prooflater2} we have 
\begin{equation*}
 \frac{d}{dz} \log \big(\overline{M(z)}\big)\in\sS_1(\cG).
\end{equation*}
Furthermore, in Step 3 of the proof of Theorem~\ref{mainssf2} we can now apply Proposition~\ref{nprop}
under the assumption \eqref{hahagut}, so that \eqref{immerhin} holds with $N(\lambda+i0)$ replaced by $\overline{M(\lambda+i0)}$. Now the assertions $(i)$--$(iii)$  
in Corollary~\ref{mainthmcorchen} follow from Theorem~\ref{mainssf2} and Proposition~\ref{nprop}.
\end{proof}

In the next step we replace the sign condition \eqref{sign333} in the assumptions in Theorem~\ref{mainssf2} by some weaker comparability condition,
which is satisfied in our applications in the next sections.
Again, let $A$ and $B$ be self-adjoint operators in a separable Hilbert space $\sH$ and assume that there exists a self-adjoint operator $C$ in $\sH$ such that
\begin{equation}\label{sisi}
 (C-\zeta_A I_{\sH})^{-1}\geq (A-\zeta_A I_{\sH})^{-1}\, \text{ and } \,  
 (C-\zeta_B I_{\sH})^{-1}\geq (B-\zeta_B I_{\sH})^{-1}
\end{equation}
for some $\zeta_A\in\rho(A)\cap\rho(C)\cap\dR$ and some $\zeta_B\in\rho(B)\cap\rho(C)\cap\dR$, respectively. Assume that the closed symmetric
operators $S_A=A\cap C$ and $S_B=B\cap C$ are both densely defined and choose quasi boundary triples $\{\cG_A,\Gamma_0^A,\Gamma_1^A\}$ and  
$\{\cG_B,\Gamma_0^B,\Gamma_1^B\}$ with $\gamma$-fields $\gamma_A,\gamma_B$ and Weyl functions $M_A$, $M_B$ for
\begin{equation*}
 T_A=S_A^*\upharpoonright\bigl(\dom (A)+\dom(C)\bigr)\, \text{ and } \, T_B=S_B^*\upharpoonright\bigl(\dom (B)+\dom(C)\bigr)
\end{equation*}
such
that
\begin{equation}\label{hoho0}
 C=T_A\upharpoonright\ker(\Gamma_0^A)=T_B\upharpoonright\ker(\Gamma_0^B),
\end{equation}
and
\begin{equation}\label{hoho1}
  A=T_A\upharpoonright\ker(\Gamma_1^A)\, \text{ and } \, B=T_B\upharpoonright\ker(\Gamma_1^B),
\end{equation}
(cf.\ Proposition~\ref{haha}). Next, assume that for some $k \in \bbN_0$, 
the conditions in Theorem~\ref{mainssf2} are satisfied for the $\gamma$-fields $\gamma_A,\gamma_B$ and the Weyl functions $M_A$, $M_B$.
Then the difference of the $2k+1$-th powers of the resolvents of $A$ and $C$, and the difference of the $2k+1$-th powers of the resolvents of $B$ and $C$ are trace class operators,
and for orthonormal bases $(\varphi_j)_{j \in J}$ in $\cG_A$ and $(\psi_{\ell})_{\ell \in L}$ in $\cG_B$ ($J, L \subseteq \bbN$ appropriate index sets), 
\begin{equation}
   \xi_A(\lambda)=\sum_{j \in J}\lim_{\varepsilon\downarrow 0}
   \pi^{-1}\bigl(\Im\big(\log\big(\overline{M_A(\lambda+i\varepsilon)}\big)\big)\varphi_j,\varphi_j\bigr)_{\cG_A}\, \text{ for a.e.~$\lambda \in \dR$,}
  \end{equation}
and 
\begin{equation}
   \xi_B(\lambda)=\sum_{\ell \in L}\lim_{\varepsilon\downarrow 0}
   \pi^{-1}\bigl(\Im\big(\log\big(\overline{M_B(\lambda+i\varepsilon)}\big)\big)
   \psi_{\ell},\psi_{\ell}\bigr)_{\cG_B}\, \text{ for a.e.~$\lambda \in \dR$,}
  \end{equation}
are spectral shift functions for the pairs $\{C,A\}$ and $\{C,B\}$, respectively. It follows for $z \in \rho(A)\cap\rho(B)\cap\rho(C)$ 
that
\begin{equation*}
\begin{split}
 &\tr_{\sH}\bigl( (B - z I_{\sH})^{-(2k+1)}-(A - z I_{\sH})^{-(2k+1)}\bigr)\\
 &\quad= \tr_{\sH}\bigl( (B - z I_{\sH})^{-(2k+1)}-(C - z I_{\sH})^{-(2k+1)}\bigr)  
 \\ 
 & \qquad -\tr_{\sH}\bigl( (A - z I_{\sH})^{-(2k+1)}-(C - z I_{\sH})^{-(2k+1)}\bigr) \\
 &\quad = - (2k+1) \int_\dR \frac{[\xi_B(\lambda) - \xi_A(\lambda)] \, d\lambda }{(\lambda - z)^{2k+2}}
 \end{split}
 \end{equation*}
 and
 \begin{equation*}
  \int_\dR \frac{\vert \xi_B(\lambda)-\xi_A(\lambda)\vert \, d\lambda}{(1+\vert \lambda \vert)^{2m+2}} < \infty.
 \end{equation*}
  Therefore, 
\begin{equation} 
    \xi(\lambda) = \xi_B(\lambda)-\xi_A(\lambda)  \, \text{ for a.e.~$\lambda \in \dR$,}   \label{ssfab}
\end{equation}
 is a spectral shift function for the pair $\{A,B\}$, and 
  in the special case where $\cG_A = \cG_B := \cG$ and $(\varphi_j)_{j \in J}$ is an orthonormal basis in $\cG$, one infers that 
  \begin{align}\label{ssfabc}
   \xi(\lambda) 
   =\sum_{j \in J}\lim_{\varepsilon\downarrow 0}\pi^{-1} \Bigl(\bigl(\Im\bigl( \log \big(\overline{M_B(\lambda+i\varepsilon)}\big) - \log \big(\overline{M_A(\lambda + i \varepsilon)}\big)\bigr)
   \varphi_j,\varphi_j\Bigr)_\cG &\\
   \text{for a.e. $\lambda\in\dR$.} &\nonumber
  \end{align}
We emphasize that in contrast to the spectral shift function in Theorem~\ref{mainssf2}, the spectral shift 
function $\xi$ in \eqref{ssfab} and \eqref{ssfabc} is not necessarily nonnegative.

\section{Elliptic differential operators with Robin boundary conditions}\label{ap1sec}

In this section we consider a uniformly elliptic formally symmetric second-order differential expression $\cL$ on a bounded or unbounded domain in $\bbR^n$ with compact boundary, and we
determine a spectral shift function for a pair $\{A_{\beta_0},A_{\beta_1}\}$ consisting of two self-adjoint Robin-realizations 
of $\cL$. We shall assume throughout this section that the following hypothesis holds.

\begin{hypothesis}\label{hypo5}
Let $n \in \bbN$, $n \geq 2$, and $\Omega \subseteq \bbR^n$  be nonempty and open such that its 
boundary $\partial\Omega$ is nonempty, $C^\infty$-smooth, and compact. 
Consider the differential expression
\begin{equation}\label{elli}
  \cL = -\sum_{j,k=1}^n \left(
  \frac{\partial}{\partial x_j} \Bigl(a_{jk} \frac{\partial }{\partial
  x_k}\Bigr)\right)+ a
\end{equation}
on $\Omega$, where the real-valued coefficients $a_{jk}\in C^\infty(\overline\Omega)$ satisfy 
$a_{jk}(x)=a_{kj}(x)$ for all $x\in\overline\Omega$ and $j,k=1,\dots,n$, their first partial derivatives are bounded in $\overline\Omega$, 
and $a\in C^\infty(\overline\Omega)$ is a real-valued, bounded, measurable function.
Furthermore, it is assumed that $\cL$ is uniformly elliptic on $\overline\Omega$, that is, 
for some $C>0$, 
\begin{equation}\label{ellijaha}
  \sum_{j,k=1}^n a_{jk}(x)\xi_j\xi_k\geq C\sum_{k=1}^n\xi_k^2
\end{equation}
holds for all $\xi=(\xi_1,\dots,\xi_n)^\top\in\dR^n$ and $x\in\overline\Omega$. 
\end{hypothesis}

We briefly recall the definition and some mapping properties of the Dirichlet and (oblique) 
Neumann trace maps associated with the differential expression $\cL$. 
For a function $f\in C^\infty(\overline\Omega)$ we denote its trace by
$\gamma_D f=f\vert_{\partial\Omega}$ and we set
\begin{equation*}
  \gamma_\nu f=
  \sum_{j,k=1}^n a_{jk} \mathfrak n_j \frac{\partial f}{\partial x_k}
  \Bigl|_{\partial\Omega},\quad f\in C^\infty(\overline\Omega),
\end{equation*}
where $\mathfrak n(x)=(\mathfrak n_1(x),\dots, \mathfrak n_n(x))^\top$ is the unit normal vector at 
$x\in\partial\Omega$ pointing out of the domain $\Omega$.
Let $C_0^\infty(\overline\Omega):=\{h\vert_{\overline\Omega} \, | \,h\in C_0^\infty(\dR^n)\}$ and
recall that the mapping $C_0^\infty(\overline\Omega)\ni
f\mapsto\{\gamma_D f,\gamma_\nu f\}$
can be extended to a continuous surjective mapping
\begin{equation}\label{tracemapelli}
  H^2(\Omega)\ni f\mapsto \left\{\gamma_D f,\gamma_\nu f\right\}
  \in H^{3/2}(\partial\Omega)\times H^{1/2}(\partial\Omega),
\end{equation}
and that Green's second identity
\begin{equation}\label{g2}
 (\cL f,g)_{L^2(\Omega)}-(f,\cL g)_{L^2(\Omega)}=(\gamma_D f,\gamma_\nu g)_{L^2(\partial\Omega)}-(\gamma_\nu f,\gamma_D g)_{L^2(\partial\Omega)}
\end{equation}
is valid for all $f,g\in H^2(\Omega)$; cf. \cite{LM72}. We will also use the fact that 
\begin{equation}\label{gamss}
 \gamma_D f\in H^{k-1/2}(\partial\Omega)\, \text{ for all } \, f\in H^k(\Omega), \; k \in \bbN.
\end{equation}
The following lemma is a variant of \cite[Lemma~4.7]{BLL13-IEOT}; it will be useful for 
the $\sS_p$-estimates in this and the next section.

\begin{lemma}\label{usel}
Let $\Omega\subseteq \dR^n$ be as in Hypothesis~\ref{hypo5}, let 
$X\in \cL\big(L^2(\Omega),H^t(\partial\Omega)\big)$, and assume that 
$\ran(X)\subseteq H^s(\partial\Omega)$ for some $s>t\geq 0$. Then $X$ is compact and 
 \begin{equation*}
  X\in\sS_r\bigl(L^2(\Omega),H^t(\partial\Omega)\bigr)\, \text{ for all } \, r> (n-1)/(s-t).
 \end{equation*}
\end{lemma}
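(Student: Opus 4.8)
The plan is to factor $X$ through the continuous Sobolev embedding
$\iota_{s,t}\colon H^s(\partial\Omega)\hookrightarrow H^t(\partial\Omega)$ and to invoke the classical fact that, on the smooth compact $(n-1)$-dimensional manifold $\partial\Omega$, this embedding belongs to the Schatten--von Neumann ideal $\sS_r$ exactly when $r>(n-1)/(s-t)$ (this is the substance behind \cite[Lemma~4.7]{BLL13-IEOT}).

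First I would upgrade the boundedness of $X$. By hypothesis $\ran(X)\subseteq H^s(\partial\Omega)$, and the inclusion $H^s(\partial\Omega)\hookrightarrow H^t(\partial\Omega)$ is continuous and injective; hence the graph of $X$ viewed as a map $L^2(\Omega)\to H^s(\partial\Omega)$ is closed, and the closed graph theorem gives $\widetilde X\in\cL\big(L^2(\Omega),H^s(\partial\Omega)\big)$ with $X=\iota_{s,t}\widetilde X$. (Here one uses only that $\partial\Omega$ is $C^\infty$-smooth and compact, as guaranteed by Hypothesis~\ref{hypo5}, so that all the Sobolev spaces on $\partial\Omega$ and the embeddings between them are well-defined.)

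Next comes the decisive ingredient: the $\sS_r$-property of $\iota_{s,t}$. Equip $\partial\Omega$ with the Riemannian metric induced from $\dR^n$ and let $\Delta_{\partial\Omega}$ be the associated Laplace--Beltrami operator; then the $H^r(\partial\Omega)$-norm is equivalent to the graph norm of $(I-\Delta_{\partial\Omega})^{r/2}$ on $L^2(\partial\Omega)$. Weyl's asymptotic law for the second-order positive elliptic operator $I-\Delta_{\partial\Omega}$ on the $(n-1)$-dimensional manifold $\partial\Omega$ gives eigenvalues $\mu_j\sim c\,j^{2/(n-1)}$, so the singular values of $\iota_{s,t}$ behave like $\mu_j^{-(s-t)/2}\sim c'\,j^{-(s-t)/(n-1)}$. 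This sequence lies in $\ell^r$ precisely when $r(s-t)/(n-1)>1$, i.e.\ when $r>(n-1)/(s-t)$; hence $\iota_{s,t}\in\sS_r\big(H^s(\partial\Omega),H^t(\partial\Omega)\big)$ for all such $r$, and in particular $\iota_{s,t}$ is compact.

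Finally, since each $\sS_r$ is a two-sided operator ideal, the factorization $X=\iota_{s,t}\widetilde X$ with $\widetilde X$ bounded shows $X\in\sS_r\big(L^2(\Omega),H^t(\partial\Omega)\big)$ for every $r>(n-1)/(s-t)$, and compactness of $X$ follows from that of $\iota_{s,t}$. The only point genuinely needing care is the Schatten-class statement for $\iota_{s,t}$; this is standard spectral theory for elliptic operators on a compact manifold (see, e.g., Weyl asymptotics, or the Birman--Solomyak estimates for embedding operators), so I do not expect a real obstacle beyond quoting it in the sharp form $r>(n-1)/(s-t)$.
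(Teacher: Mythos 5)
Your argument is correct, and it is essentially the same as the one behind the result the paper relies on: the paper states Lemma~\ref{usel} without proof, citing \cite[Lemma~4.7]{BLL13-IEOT}, whose proof is precisely your factorization $X=\iota_{s,t}\widetilde X$ (closed graph theorem) combined with the singular value asymptotics of the embedding $H^s(\partial\Omega)\hookrightarrow H^t(\partial\Omega)$ obtained from Weyl asymptotics for the Laplace--Beltrami operator on the compact $(n-1)$-dimensional manifold $\partial\Omega$, followed by the two-sided ideal property of $\sS_r$. No gaps; the only point deserving the care you already gave it is the sharp exponent $r>(n-1)/(s-t)$ for the embedding.
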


Assume that $\beta_0\in C^1(\partial\Omega)$ and $\beta_1\in C^1(\partial\Omega)$ are real-valued functions.
We consider the elliptic differential operators in $L^2(\Omega)$, 
\begin{equation}\label{anni}
 A_{\beta_0} f=\cL f,\quad \dom (A_{\beta_0}) 
 = \bigl\{f\in H^2(\Omega) \, \big| \,\beta_0 \gamma_D f=\gamma_\nu f\bigr\},
\end{equation}
and
\begin{equation}\label{robi}
 A_{\beta_1} f=\cL f,\quad \dom (A_{\beta_1}) 
 = \bigl\{f\in H^2(\Omega) \, \big| \,\beta_1 \gamma_D f=\gamma_\nu f\bigr\},
\end{equation}
which correspond to the densely defined, closed, semibounded quadratic forms
\begin{equation}\label{form0}
 \mathfrak a_{\beta_0}[f,g]=\sum_{j,k=1}^n \left(a_{jk} \frac{\partial f}{\partial x_k},\frac{\partial g}{\partial x_j}\right)_{L^2(\Omega)}+(af,g)_{L^2(\Omega)} - 
 (\beta_0\gamma_D f,\gamma_D g)_{L^2(\partial\Omega)},
\end{equation}
and
\begin{equation}\label{form1}
 \mathfrak a_{\beta_1}[f,g]=\sum_{j,k=1}^n \left(a_{jk} \frac{\partial f}{\partial x_k},\frac{\partial g}{\partial x_j}\right)_{L^2(\Omega)}+(af,g)_{L^2(\Omega)} - 
 (\beta_1\gamma_D f,\gamma_D g)_{L^2(\partial\Omega)}
\end{equation}
defined on $H^1(\Omega)\times H^1(\Omega)$.
Both operators $A_{\beta_0}$ and $A_{\beta_1}$ are self-adjoint in $L^2(\Omega)$ and semibounded from below. For $\beta\in\dR$ 
we shall also make use of the self-adjoint Robin realization 
\begin{equation}\label{annibb}
 A_\beta f=\cL f,\quad \dom (A_\beta) 
 = \bigl\{f\in H^2(\Omega) \, \big| \,\beta \gamma_D f=\gamma_\nu f\bigr\},
\end{equation}
which corresponds to the densely defined, closed, semibounded quadratic form
\begin{equation}\label{form0bb}
 \mathfrak a_\beta[f,g]=\sum_{j,k=1}^n \left(a_{jk} \frac{\partial f}{\partial x_k},\frac{\partial g}{\partial x_j}\right)_{L^2(\Omega)}+(af,g)_{L^2(\Omega)} - 
 (\beta\gamma_D f,\gamma_D g)_{L^2(\partial\Omega)}.
\end{equation}
on $H^1(\Omega)\times H^1(\Omega)$.

Next, we define the Neumann-to-Dirichlet map associated to $\cL$ as a densely defined operator in $L^2(\partial\Omega)$. First one notes that for $\beta_0=0$ in \eqref{anni}
(or $\beta=0$ in \eqref{annibb}) one obtains 
\begin{equation}\label{aneu}
A_N:=A_0=A_{\beta_0},
\end{equation}
where $A_N$ denotes the self-adjoint Neumann realization of $\cL$ in $L^2(\Omega)$. One 
recalls that for $\varphi\in H^{1/2}(\partial\Omega)$ and $z \in \rho(A_N)$, the boundary value problem 
\begin{equation}\label{bvpelli}
 \cL f_z = z f_z ,\quad \gamma_\nu f_z =\varphi,
\end{equation}
admits a unique solution $f_z \in H^2(\Omega)$; this follows, for instance, from \eqref{tracemapelli} and $z \in \rho(A_N)$. 
The corresponding solution operator is denoted by
\begin{equation}\label{ppelli}
 P_\nu(z):L^2(\partial\Omega) \rightarrow  L^2(\Omega),\quad \varphi\mapsto f_z ,
\end{equation}
and from the construction it is clear that 
\begin{equation*}
 \dom(P_\nu(z))=H^{1/2}(\partial\Omega)\, \text{ and } \, \ran(P_\nu(z))\subseteq H^2(\Omega).
\end{equation*}
For $z \in \rho(A_N)$ the {\it Neumann-to-Dirichlet map} associated to $\cL$ is 
defined as
\begin{equation}\label{ndmapll}
 \cN(z):L^2(\partial\Omega) \rightarrow L^2(\partial\Omega),\quad \varphi\mapsto 
 \gamma_D P_\nu(z)\varphi;
\end{equation}
it maps the (oblique) Neumann boundary values $\gamma_\nu f_z $ of solutions $f_z \in H^2(\Omega)$ of \eqref{bvpelli} onto the Dirichlet
boundary values $\gamma_D f_z $. It follows from the properties of the trace maps that
\begin{equation}
 \dom(\cN(z))=H^{1/2}(\partial\Omega)\, \text{ and } \, \ran(\cN(z))\subseteq H^{3/2}(\partial\Omega).
\end{equation}

In the next theorem a spectral shift function for the pair $\{A_{\beta_0},A_{\beta_1}\}$ is expressed in terms of the limits of the 
Neumann-to-Dirichlet map $\cN(z)$ and the functions $\beta_0$ and $\beta_1$ in the boundary conditions of the Robin realizations $A_{\beta_0}$ and $A_{\beta_1}$. 
We mention that the trace class condition for the difference of the $2k+1$-th powers of the resolvents was shown for $k=0$ in \cite{BLLLP10,G11}
and for $k \in \bbN$ in \cite{BLL13-3}.

\begin{theorem}\label{nrthmelli}
Assume Hypothesis~\ref{hypo5}, let $A_{\beta_0}$ and $A_{\beta_1}$ be the self-adjoint Robin realizations of $\cL$ in $L^2(\Omega)$ in
\eqref{anni} and \eqref{robi}, respectively, let $\beta\in\dR$ such that $\beta_p(x)<\beta$ 
for all $x\in\partial\Omega$ and $p=0,1$ and let $A_\beta$ be the self-adjoint Robin realizations of $\cL$ in \eqref{annibb}. 
Furthermore, let \begin{equation*}
 \cM_p(z)= (\beta-\beta_p)^{-1} \bigl(\beta_p\overline{\cN(z)}-I_{L^2(\partial\Omega)}\bigr)\bigl(\beta\overline{\cN(z)}-I_{L^2(\partial\Omega)}\bigr)^{-1},
 \quad z\in\dC\backslash\dR, \; j=1,2, 
\end{equation*}
where $\overline{\cN(z)}$ denotes the closure in $L^2(\partial\Omega)$ of the 
Neumann-to-Dirichlet map associated with $\cL$ in \eqref{ndmapll}.
Then the following assertions $(i)$ and $(ii)$ hold for $k \in \bbN_0$ such that $k\geq (n-3)/4$:
 \begin{itemize}
  \item [$(i)$] The difference of the $2k+1$th-powers of the resolvents of $A_{\beta_0}$ and $A_{\beta_1}$ is 
a trace class operator, that is,
\begin{equation*}
\big[(A_{\beta_1} - z I_{L^2(\Omega)})^{-(2k+1)} 
 - (A_{\beta_0} - z I_{L^2(\Omega)})^{-(2k+1)}\big] \in\sS_1\bigl(L^2(\Omega)\bigr) 
\end{equation*}
holds for all $z\in\rho(A_{\beta_0})\cap\rho(A_{\beta_1})$.
 \item [$(ii)$] For any orthonormal basis $(\varphi_j)_{j \in J}$ in 
 $L^2(\partial\Omega)$ the function 
  \begin{align*}
   \xi(\lambda) 
   =\sum_{j \in J} \lim_{\varepsilon\downarrow 0}\pi^{-1} \Bigl(\bigl(\Im\bigl( \log (\cM_1(\lambda+i\varepsilon)) - \log (\cM_0(\lambda + i \varepsilon))\bigr)\bigr)
   \varphi_j,\varphi_j\Bigr)_{L^2(\partial\Omega)}\\ 
   \text{for a.e. $\lambda\in\dR$,} 
  \end{align*}
is a spectral shift function for the pair $\{A_{\beta_0},A_{\beta_1}\}$ such that $\xi(\lambda)=0$ for 
$\lambda < \min(\sigma(A_\beta))$ and the trace formula
\begin{equation*}
 \tr_{L^2(\Omega)}\bigl( (A_{\beta_1} - z I_{L^2(\Omega)})^{-(2k+1)} 
 - (A_{\beta_0} - z I_{L^2(\Omega)})^{-(2k+1)}\bigr) 
 = - (2k+1) \int_\dR \frac{\,\xi(\lambda)\, d\lambda}{(\lambda - z)^{2k+2}}
\end{equation*}
is valid for all $z \in \rho(A_{\beta_0})\cap\rho(A_{\beta_1})$.
\end{itemize}
\end{theorem}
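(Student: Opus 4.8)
The plan is to deduce Theorem~\ref{nrthmelli} from the abstract results of Section~\ref{ssfsec}, taking the Robin realization $A_\beta$ with constant boundary coefficient as the comparison operator $C$ in \eqref{sisi}--\eqref{ssfabc}. First I would construct a suitable quasi boundary triple by invoking Theorem~\ref{ratemal} with $T:=\cL\upharpoonright H^2(\Omega)$ and boundary maps $\Gamma_0 f:=\gamma_\nu f-\beta\gamma_D f$, $\Gamma_1 f:=\gamma_D f$; Green's identity \eqref{g2} and surjectivity of the trace map \eqref{tracemapelli} show that $\{L^2(\partial\Omega),\Gamma_0,\Gamma_1\}$ is a quasi boundary triple for $\overline T=S^*$ with $A_\beta=T\upharpoonright\ker(\Gamma_0)$, and a short computation identifies its Weyl function with $z\mapsto\cN(z)(I_{L^2(\partial\Omega)}-\beta\cN(z))^{-1}$. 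Letting $\Theta_p\in\cL(L^2(\partial\Omega))$ denote multiplication by $(\beta_p-\beta)^{-1}$ — bounded and self-adjoint because $\beta_p<\beta$ on the compact boundary — and replacing $\Gamma_1$ by $\Gamma_1^{(p)}:=\Gamma_1-\Theta_p\Gamma_0$ yields a quasi boundary triple with unchanged $\gamma$-field $\gamma_0$, with $\ker(\Gamma_1^{(p)})=\dom(A_{\beta_p})$, and with Weyl function whose closure equals precisely the operator $\cM_p(z)$ of the theorem. Thus, with $C:=A_\beta$, $A:=A_{\beta_0}$, $B:=A_{\beta_1}$, $\cG_A=\cG_B=L^2(\partial\Omega)$, $M_A=\cM_0$, $M_B=\cM_1$, we land in the setting \eqref{hoho0}--\eqref{hoho1}.

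Next I would check the hypotheses of Theorem~\ref{mainssf2} for the pairs $\{A_\beta,A_{\beta_0}\}$ and $\{A_\beta,A_{\beta_1}\}$. The form inequalities $\mathfrak a_\beta\le\mathfrak a_{\beta_p}$ (immediate from \eqref{form0}--\eqref{form0bb} and $\beta>\beta_p$) give $A_\beta\le A_{\beta_p}$, hence $(A_\beta-\zeta_0 I)^{-1}\ge(A_{\beta_p}-\zeta_0 I)^{-1}$ for any real $\zeta_0<\min\sigma(A_\beta)$, which is the required sign condition \eqref{sisi}. Boundedness of $\cM_p(z_1)$ and $\cM_p(z_2)^{-1}$ for suitable $z_1,z_2$ follows from the mapping properties of the compact Neumann-to-Dirichlet operator $\overline{\cN(z)}$ and from $-\cM_p^{-1}$ being the Weyl function of a quasi boundary triple for the self-adjoint operator $A_{\beta_p}$. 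For the Schatten conditions \eqref{ass1}--\eqref{ass3} I would use $\gamma_0(\overline{z})^*=\gamma_D(A_\beta-\overline{z}\,I)^{-1}$ together with \eqref{gammad1}--\eqref{gammad3} to rewrite every derivative appearing there as a composition of $\gamma_D$ with powers of the resolvent of $A_\beta$ and with the bounded operators $\cM_p(z)^{-1}$ and $\Theta_p$; since $\beta$ is constant, elliptic regularity under Hypothesis~\ref{hypo5} gives $\dom(A_\beta^j)\subseteq H^{2j}(\Omega)$, so these operators map $L^2(\Omega)$ into Sobolev spaces of positive order on $\partial\Omega$, and Lemma~\ref{usel} converts the smoothing into Schatten-ideal membership. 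A H\"older estimate for the Schatten norms then shows the products in \eqref{ass1}--\eqref{ass3} lie in $\sS_1$ exactly when $k\ge(n-3)/4$ — this is the trace-class fact of \cite{BLLLP10,G11} for $k=0$ and of \cite{BLL13-3} for $k\ge1$.

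With the hypotheses in hand, Theorem~\ref{mainssf2} gives \eqref{tracehop} for both pairs, and subtraction yields assertion $(i)$. It also produces spectral shift functions $\xi_A,\xi_B$ for $\{A_\beta,A_{\beta_0}\}$ and $\{A_\beta,A_{\beta_1}\}$ expressed through $\Im(\log\cM_0)$ and $\Im(\log\cM_1)$; since $\cG_A=\cG_B$, the discussion around \eqref{ssfab}--\eqref{ssfabc} shows $\xi=\xi_B-\xi_A$ is a spectral shift function for $\{A_{\beta_0},A_{\beta_1}\}$ of the stated form, with the asserted trace formula. Finally, for real $\lambda<\min\sigma(A_\beta)\le\min\sigma(A_{\beta_p})$ the Krein formula \eqref{resab} and $A_\beta\le A_{\beta_p}$ force $\cM_p(\lambda)\ge0$ with bounded inverse, so $\cM_p$ extends analytically by reflection through $(-\infty,\min\sigma(A_\beta))$ with spectrum in $(\varepsilon,\infty)$ on compact subintervals; Theorem~\ref{nthm} then gives $\xi_A(\lambda)=\xi_B(\lambda)=0$ there, so $\xi(\lambda)=0$ for $\lambda<\min\sigma(A_\beta)$, as claimed.

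The step I expect to be the main obstacle is the verification of the Schatten conditions \eqref{ass1}--\eqref{ass3} with the sharp exponent $k\ge(n-3)/4$. The pitfall is to differentiate $\gamma_0(z)\cM_p(z)^{-1}\gamma_0(\overline{z})^*$ directly, which introduces powers of the resolvent of $A_{\beta_p}$; because $\beta_p$ is only assumed $C^1$, $\dom(A_{\beta_p}^j)$ is \emph{not} contained in $H^{2j}(\Omega)$ and the naive Schatten count is lost. The remedy is to expand by the Leibniz and chain rules so that only the resolvent of the constant-coefficient operator $A_\beta$ and the bounded factors $\cM_p(z)^{-1}$, $\Theta_p$ occur; after that the estimate via Lemma~\ref{usel} goes through and pins down precisely the admissible range of $k$.
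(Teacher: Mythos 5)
Your proposal is correct and follows essentially the same route as the paper's proof: the same quasi boundary triple with $A_\beta=T\upharpoonright\ker(\Gamma_0)$ (your transformation $\Gamma_1\mapsto\Gamma_1-\Theta_p\Gamma_0$ reproduces the paper's boundary maps and yields exactly the Weyl functions $\cM_p$), the sign condition via the form inequality $\mathfrak a_\beta\le\mathfrak a_{\beta_p}$, the Schatten conditions via $\gamma(\overline z)^*=-\gamma_D(A_\beta-zI_{L^2(\Omega)})^{-1}$ together with elliptic regularity for the constant-coefficient Robin realization and Lemma~\ref{usel}, and two applications of Theorem~\ref{mainssf2} combined as in \eqref{ssfab}--\eqref{ssfabc}. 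The only point you treat more lightly than the paper is the boundedness of $\cM_p(z)$ and $\cM_p(z)^{-1}$ in $L^2(\partial\Omega)$, for which the paper runs a separate closability/duality/interpolation argument for $\bigl(\beta_p\cN(z)-I_{L^2(\partial\Omega)}\bigr)^{-1}$ rather than deducing it from compactness of $\overline{\cN(z)}$ and the Weyl-function structure alone; this is a verification within the same approach, not a different method.
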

\begin{proof}
The proof of Theorem~\ref{nrthmelli} consists of three steps. In the first step we construct a suitable quasi boundary triple such that
the self-adjoint operators $A_\beta$
and $A_{\beta_1}$ correspond to the kernels of the boundary mappings $\Gamma_0$ and $\Gamma_1$, and in the second and third step we show that
the pair $\{A_{\beta},A_{\beta_1}\}$ and the $\gamma$-field and Weyl function satisfy the assumptions in Theorem~\ref{mainssf2}.
The same reasoning applies to the pair $\{A_{\beta},A_{\beta_0}\}$, and hence Theorem~\ref{mainssf2} can be applied to both pairs $\{A_{\beta},A_{\beta_1}\}$
and $\{A_{\beta},A_{\beta_0}\}$, which together with the considerations at the end of 
Section~\ref{ssfsec} yield the assertions in Theorem~\ref{nrthmelli}.

{\it Step 1.} The basic techniques in this step have been used in a similar framework, for instance, in \cite{BL07,BL12,BLL13-IEOT,BMN15}. 
We consider the closed symmetric operator $S=A_{\beta}\cap A_{\beta_1}$, which is given by
\begin{equation}\label{selli}
 Sf=\cL f,\quad\dom(S)=\bigl\{f\in H^2(\Omega) \, \big| \, \gamma_D f=\gamma_\nu f=0\bigr\}, 
\end{equation}
where we have used that $\beta-\beta_1(x)\not=0$ for all $x\in\partial\Omega$. In this step we check that the operator 
\begin{equation}\label{telli}
 Tf=\cL f,\quad\dom (T)=H^2(\Omega),
\end{equation}
satisfies $\overline T=S^*$ and that $\big\{L^2(\partial\Omega),\Gamma_0,\Gamma_1\big\}$, where
\begin{equation}\label{qbtelli1}
 \Gamma_0 f=\beta\gamma_D f - \gamma_\nu f,\quad f\in\dom (T),
\end{equation}
and
\begin{equation}\label{qbtelli2}
 \Gamma_1 f= (\beta-\beta_1)^{-1} \bigl(\beta_1 \gamma_D f - \gamma_\nu f\bigr) ,\quad f\in\dom (T),
\end{equation} 
is a quasi boundary triple for $T\subset S^*$ such that
\begin{equation}\label{aaelli}
 A_{\beta}=T\upharpoonright\ker(\Gamma_0)\, \text{ and } \, A_{\beta_1}=T\upharpoonright\ker(\Gamma_1),
\end{equation}
and for all $z \in \rho(A_{\beta})\cap\rho(A_N)$, where $A_N$ is the self-adjoint Neumann realization in \eqref{aneu}, 
the corresponding $\gamma$-field $\gamma$ and Weyl function $M$ in $L^2(\partial\Omega)$ 
are given by
\begin{equation}\label{welli}
\gamma(z)=P_\nu(z)\bigl(\beta\cN(z)-I_{L^2(\partial\Omega)}\bigr)^{-1},\quad\dom(\gamma(z))=H^{1/2}(\partial\Omega),
\end{equation}
and
\begin{equation}\label{mmchen}
\begin{split}
& M(z) = (\beta-\beta_1)^{-1} \bigl(\beta_1\cN(z)-I_{L^2(\partial\Omega)}\bigr)\bigl(\beta\cN(z)-I_{L^2(\partial\Omega)}\bigr)^{-1},\\
& \dom(M(z)) = H^{1/2}(\partial\Omega).
 \end{split}
\end{equation}

We will use Theorem~\ref{ratemal} for this purpose. For $f,g\in\dom(T)=H^2(\Omega)$ one 
obtains with the help of Green's identity \eqref{g2}, 
\begin{equation*}
\begin{split}
&(\Gamma_1 f,\Gamma_0 g)_{L^2(\partial\Omega)}-(\Gamma_0 f,\Gamma_1 g)_{L^2(\partial\Omega)}\\
& \quad =\bigl((\beta-\beta_1)^{-1}(\beta_1 \gamma_D f - \gamma_\nu f),\beta \gamma_D g - \gamma_\nu g\bigr)_{L^2(\partial\Omega)}\\
&\qquad - \bigl(\beta \gamma_D f - \gamma_\nu f,(\beta-\beta_1)^{-1}(\beta_1 \gamma_D g - \gamma_\nu g)\bigr)_{L^2(\partial\Omega)}\\
&\quad=-\bigl((\beta-\beta_1)^{-1} \beta_1 \gamma_D f , \gamma_\nu g \bigr)_{L^2(\partial\Omega)}
        - \bigl(\gamma_\nu f ,(\beta-\beta_1)^{-1}\beta\gamma_D g \bigr)_{L^2(\partial\Omega)} \\
&\qquad + \bigl((\beta-\beta_1)^{-1}\beta \gamma_D f , \gamma_\nu g\bigr)_{L^2(\partial\Omega)}
              +\bigl(\gamma_\nu f , (\beta-\beta_1)^{-1}\beta_1\gamma_D g\bigr)_{L^2(\partial\Omega)}\\
&\quad=\bigl(\gamma_D f,\gamma_\nu g\bigr)_{L^2(\partial\Omega)}-\bigl(\gamma_\nu f,\gamma_D g\bigr)_{L^2(\partial\Omega)}\\
&\quad=(\cL f,g)_{L^2(\Omega)}-(f,\cL g)_{L^2(\Omega)}\\
&\quad=(Tf,g)_{L^2(\Omega)}-(f,Tg)_{L^2(\Omega)}, 
\end{split}
\end{equation*}
and hence condition $(i)$ in Theorem~\ref{ratemal} holds. Since
\begin{equation}\label{dxxx}
\begin{pmatrix}\Gamma_0 f\\ \Gamma_1 f\end{pmatrix}=\begin{pmatrix} \beta & -I_{L^2(\partial\Omega)} \\ 
\beta_1(\beta-\beta_1)^{-1} & - (\beta-\beta_1)^{-1} \end{pmatrix} 
\begin{pmatrix}\gamma_D f\\ \gamma_\nu f\end{pmatrix},\quad f\in\dom(T),
\end{equation}
and the $2\times 2$ operator matrix in \eqref{dxxx} is an isomorphism in $L^2(\partial\Omega)\times L^2(\partial\Omega)$, it follows from
\eqref{tracemapelli} that 
$\ran(\Gamma_0,\Gamma_1)^\top$ is dense in $L^2(\partial\Omega)\times L^2(\partial\Omega)$.
It is easy to see that $\ker(\Gamma_0)\cap\ker(\Gamma_1)$ is dense in $L^2(\Omega)$.
Moreover, \eqref{aaelli} is clear from the definition
of $T$ and the boundary maps in \eqref{qbtelli1}--\eqref{qbtelli2}. Hence also conditions $(ii)$ 
and $(iii)$ in Theorem~\ref{ratemal} are satisfied, and from \eqref{selli}, \eqref{telli},
and \eqref{qbtelli1}--\eqref{qbtelli2} one obtains 
\begin{equation*}
 S=T\upharpoonright\bigl(\ker(\Gamma_0)\cap\ker(\Gamma_1)\bigr).
\end{equation*}
Thus Theorem~\ref{ratemal} yields $\overline T=S^*$ and that $\{L^2(\partial\Omega),\Gamma_0,\Gamma_1\}$ is a quasi boundary triple for $S^*$ such
that \eqref{aaelli} holds.

It remains to show the explicit form of the corresponding $\gamma$-field and Weyl function $M$ in \eqref{welli} and \eqref{mmchen}, respectively. 
First of all it follows from \eqref{tracemapelli} and the definition of $\Gamma_0$ in \eqref{qbtelli1} that
\begin{equation*}
 H^{1/2}(\partial\Omega)=\ran(\Gamma_0)=\dom(\gamma(z))=\dom(M(z)),\quad z \in \rho(A_{\beta}).
\end{equation*}
One notes that for $z \in \rho(A_N)$ and  
$f_z \in\ker(T - z I_{L^2(\Omega)})$ one has $\cN(z)\gamma_\nu f_z =\gamma_D f_z $ according to \eqref{ndmapll}, and hence
\begin{equation}\label{plk}
 \bigl(\beta\cN(z)-I_{L^2(\partial\Omega)}\bigr)\gamma_\nu f_z =
 \beta\gamma_D f_z -\gamma_\nu f_z =\Gamma_0 f_z, 
\end{equation}
and 
\begin{equation}\label{plkk}
 (\beta-\beta_1)^{-1}\bigl(\beta_1\cN(z)-I_{L^2(\partial\Omega)}\bigr)\gamma_\nu f_z =
 (\beta-\beta_1)^{-1}\bigl(\beta_1\gamma_D f_z -\gamma_\nu f_z \bigr)=\Gamma_1 f_z, 
\end{equation}
by \eqref{qbtelli1}--\eqref{qbtelli2}.
The relation \eqref{plk} also shows that $\ker(\beta\cN(z)-I_{L^2(\partial\Omega)})=\{0\}$ for $z \in \rho(A_{\beta})\cap\rho(A_N)$ and hence
\begin{equation}\label{plk3}
 \gamma_\nu f_z =\bigl(\beta\cN(z)-I_{L^2(\partial\Omega)}\bigr)^{-1}\Gamma_0 f_z .
\end{equation}
From this and \eqref{ppelli} it follows that the $\gamma$-field corresponding to $\{L^2(\partial\Omega),\Gamma_0,\Gamma_1\}$  has the form \eqref{welli}.
One also concludes from \eqref{plk3} and \eqref{plkk} that
\begin{equation*}
\begin{split}
 &(\beta-\beta_1)^{-1}\bigl(\beta_1\cN(z)-I_{L^2(\partial\Omega)}\bigr)\bigl(\beta\cN(z)-I_{L^2(\partial\Omega)}\bigr)^{-1}\Gamma_0 f_z \\
 &\quad =(\beta-\beta_1)^{-1}\bigl(\beta_1\cN(z)-I_{L^2(\partial\Omega)}\bigr)\gamma_\nu f_z \\
 &\quad =\Gamma_1 f_z 
\end{split}
\end{equation*}
holds for all $f_z \in\ker(T - z I_{L^2(\Omega)})$ and $z \in \rho(A_{\beta})\cap\rho(A_N)$. Thus the Weyl function corresponding
to the quasi boundary triple $\{L^2(\partial\Omega),\Gamma_0,\Gamma_1\}$  has the form \eqref{mmchen}. 

\vskip 0.2cm\noindent
{\it Step 2.} In this step we verify 
that the pair $\{A_{\beta},A_{\beta_1}\}$ satisfies the sign condition \eqref{sign333} and that the values of Weyl function and its inverse are bounded operators; see the assumptions of Theorem~\ref{mainssf2}. 

The assumption $\beta >\beta_1(x)$ shows that the semibounded quadratic forms $\mathfrak a_{\beta}$ and $\mathfrak a_{\beta_1}$ in \eqref{form0} and \eqref{form0bb} corresponding to 
$A_{\beta}$ and $A_{\beta_1}$ satisfy the inequality $\mathfrak a_{\beta}\leq \mathfrak a_{\beta_1}$. 
Hence $\min(\sigma(A_{\beta}))\leq \min(\sigma(A_{\beta_1}))$ and for
$\zeta < \min (\sigma(A_{\beta}))$ the forms $\mathfrak a_{\beta}-\zeta$ and  $\mathfrak a_{\beta_1}-\zeta$ are both nonnegative, satisfy 
the inequality $\mathfrak a_{\beta}-\zeta \leq \mathfrak a_{\beta_1}-\zeta$, and hence the resolvents of the
corresponding nonnegative self-adjoint operators $A_{\beta}-\zeta I_{L^2(\Omega)}$ and 
$A_{\beta_1}-\zeta I_{L^2(\Omega)}$ satisfy the inequality
\begin{equation*}
 (A_{\beta}-\zeta I_{L^2(\Omega)})^{-1}\geq (A_{\beta_1}-\zeta I_{L^2(\Omega)})^{-1}, 
 \quad \zeta < \min (\sigma(A_{\beta})) 
\end{equation*}
(see, e.g., \cite[Chapter VI, $\S$ 2.6]{K76} or \cite[Chapter 10, $\S$2-Theorem 6]{BS87}). Thus the  sign condition \eqref{sign333} in the assumptions of Theorem~\ref{mainssf2}
holds.

Next we prove that
\begin{equation}\label{ensure21}
M(z_1)=(\beta-\beta_1)^{-1}\bigl(\beta_1\cN(z_1)-I_{L^2(\partial\Omega)}\bigr)\bigl(\beta\cN(z_1)-I_{L^2(\partial\Omega)}\bigr)^{-1}
\end{equation}
and
\begin{equation}\label{ensure22}
M(z_2)^{-1}=\bigl(\beta\cN(z_2)-I_{L^2(\partial\Omega)}\bigr)\bigl(\beta_1\cN(z_2)-I_{L^2(\partial\Omega)}\bigr)^{-1}(\beta-\beta_1)
\end{equation}
are bounded operators for some $z_1,z_2\in\dC\backslash\dR$. According to \cite[Lemma 4.4]{BLL13-IEOT} the closure $\overline{\cN(z)}$, $z\in\dC\backslash\dR$,
of the Neumann-to-Dirichlet map in \eqref{ndmapll} in $L^2(\partial\Omega)$ is a compact operator, and hence it is clear that $\beta\cN(z)-I_{L^2(\partial\Omega)}$
and $\beta_1\cN(z)-I_{L^2(\partial\Omega)}$ are 
densely defined bounded operators in $L^2(\partial\Omega)$, and that for 
$z\in\dC\backslash\dR$ their closures are
\begin{equation}\label{kjj}
\big[\beta\overline{\cN(z)}-I_{L^2(\partial\Omega)}\big] \in \cL\bigl(L^2(\partial\Omega)\bigr) 
\, \text{ and } \, \big[\beta_1\overline{\cN(z)}-I_{L^2(\partial\Omega)}\big] 
\in \cL\bigl(L^2(\partial\Omega)\bigr).
\end{equation}
In order to see that 
\begin{equation*}
Q(z):=\bigl(\beta\cN(z)-I_{L^2(\partial\Omega)}\bigr)^{-1}\, \text{ and } \,  Q_1(z):=\bigl(\beta_1\cN(z)-I_{L^2(\partial\Omega)}\bigr)^{-1}
\end{equation*}
are bounded for $z\in\dC\backslash\dR$ 
we argue in a similar way as in the proof of \cite[Lemma~4.4]{BLL13-IEOT}: First, one notes that 
$\cN(z)\subseteq \cN({\ol z})^*$, $z\in\dC\backslash\dR$, 
holds by \eqref{g2}, and this yields that also
$Q(z)\subseteq Q({\ol z})^*$, $z\in\dC\backslash\dR$.
Hence the operator $Q(z)$ is closable in $L^2(\partial\Omega)$. Moreover, as $Q(z)$ is defined on $H^{1/2}(\partial\Omega)$ and maps
into $H^{1/2}(\partial\Omega)$, it follows that $Q(z)$ is a closed operator in $H^{1/2}(\partial\Omega)$, and hence 
\begin{equation}\label{qelli}
 Q(z):H^{1/2}(\partial\Omega)\rightarrow H^{1/2}(\partial\Omega),\quad  \varphi\mapsto Q(z)\varphi,
\end{equation}
is bounded.
Therefore, the dual operator
\begin{equation}\label{qselli}
 Q(z)^\prime:H^{-1/2}(\partial\Omega)\rightarrow H^{-1/2}(\partial\Omega),\quad  \psi\mapsto Q(z)^\prime\psi,
\end{equation}
where 
$(Q(z)^\prime\psi)(\varphi)=\psi(Q(z)\varphi)$, $\varphi\in H^{1/2}(\partial\Omega)$, is also bounded. 
One verifies that 
$Q({\ol z})^\prime$ is an extension of $Q(z)$ and hence by interpolation and 
\eqref{qelli}--\eqref{qselli}, the restriction 
\begin{equation*}
 Q({\ol z})^\prime \upharpoonright_{L^2(\partial\Omega)}:L^2(\partial\Omega)\rightarrow L^2(\partial\Omega),\quad  \phi\mapsto Q({\ol z})^\prime\phi, 
\end{equation*}
of $Q({\ol z})^\prime$ onto $L^2(\partial\Omega)$
is a bounded operator in $L^2(\partial\Omega)$ and an extension of $Q(z)$. Hence for all $z\in\dC\backslash\dR$ the operator 
$Q(z)$ is bounded in $L^2(\partial\Omega)$ and its closure is 
\begin{equation}\label{kjj2} 
\overline{Q(z)}= \bigl(\beta\overline{\cN(z)}-I_{L^2(\partial\Omega)}\bigr)^{-1}\in\cL\bigl(L^2(\partial\Omega)\bigr),\quad z\in\dC\backslash\dR.
\end{equation}
The same reasoning with $Q(z)$ replaced by $Q_1(z)$ shows that for all $z\in\dC\backslash\dR$ the operator 
$Q_1(z)$ is bounded in $L^2(\partial\Omega)$ and  
\begin{equation}\label{kjj22} 
\overline{Q_1(z)}= \bigl(\beta_1\overline{\cN(z)}-I_{L^2(\partial\Omega)}\bigr)^{-1}\in\cL\bigl(L^2(\partial\Omega)\bigr),\quad z\in\dC\backslash\dR.
\end{equation}
Next, it follows that $M(z_1)$ and $M(z_2)^{-1}$ 
in \eqref{ensure21}--\eqref{ensure22} are bounded in $L^2(\partial\Omega)$ for $z_1,z_2\in\dC\backslash\dR$ and the closure of $M(z)$ is given by 
\begin{equation}
 \overline{M(z)}= (\beta-\beta_1)^{-1}\bigl(\beta_1\overline{\cN(z)}-I_{L^2(\partial\Omega)}\bigr)\bigl(\beta\overline{\cN(z)}-I_{L^2(\partial\Omega)}\bigr)^{-1}
\end{equation}
by \eqref{kjj} and \eqref{kjj2}. One notes that $\overline{M(z)}=\cM_1(z)$ in the formulation of Theorem~\ref{nrthmelli}.

\vskip 0.2cm\noindent
{\it Step 3.} In this step we verify that the $\gamma$-field and Weyl function corresponding to the 
quasi boundary triple $\big\{L^2(\partial\Omega),\Gamma_0,\Gamma_1\big\}$ in Step 1 satisfy 
the $\sS_p$-conditions in the 
assumptions of Theorem~\ref{mainssf2} for dimensions $n \in \bbN$, $n \geq 2$, and $k\geq (n-3)/4$, 
that is, we verify 
for all $p,q \in \bbN_0$ and all $z \in \rho(A_{\beta})\cap\rho(A_{\beta_1})$ the conditions
\begin{equation}\label{ass1q}
 \overline{\gamma(z)}^{(p)}\bigl( M(z)^{-1} \gamma({\ol z})^*\bigr)^{(q)}\in\sS_1\bigl(L^2(\Omega)\bigr),\quad p+q=2k,
\end{equation}
\begin{equation}\label{ass2q}
 \bigl( M(z)^{-1} \gamma({\ol z})^*\bigr)^{(q)}\overline{\gamma(z)}^{(p)}\in\sS_1\bigl(L^2(\partial\Omega)\bigr),\quad p+q=2k,
\end{equation}
and 
 \begin{equation}\label{ass3q}
   \frac{d^j}{dz^j} \overline{M (z)}\in\sS_{(2k+1)/j}\bigl(L^2(\partial\Omega)\bigr),\quad j=1,\dots,2k+1.
 \end{equation}

In the following we shall often use the smoothing property
\begin{equation}\label{smoothi}
 (A_{\beta} - z I_{L^2(\Omega)})^{-1} f \in H^{k+2}(\Omega)\, \text{ for all } \, f\in H^k(\Omega), \; 
 k \in \bbN_0,
\end{equation}
of the resolvent of $A_{\beta}$, which follows, for instance, from \cite[Theorem 4.18]{McL00}.
One notes that \eqref{gstar} and the definition of the boundary map $\Gamma_1$ in \eqref{qbtelli2} yield
\begin{equation}\label{yes}
\begin{split}
 \gamma({\ol z})^*f&=\Gamma_1(A_{\beta} - z I_{L^2(\Omega)})^{-1}f\\
 &=(\beta-\beta_1)^{-1}(\beta_1 \gamma_D  - \gamma_\nu )
 (A_{\beta} - z I_{L^2(\Omega)})^{-1}f\\
 &=(\beta-\beta_1)^{-1}\bigl(\beta\gamma_D - \gamma_\nu + (\beta_1-\beta)\gamma_D\bigr)
 (A_{\beta} - z I_{L^2(\Omega)})^{-1}f\\
 &=(\beta-\beta_1)^{-1}(\beta\gamma_D - \gamma_\nu)(A_{\beta} - z I_{L^2(\Omega)})^{-1}f - \gamma_D(A_{\beta} - z I_{L^2(\Omega)})^{-1}f\\
 &=- \gamma_D(A_{\beta} - z I_{L^2(\Omega)})^{-1}f
\end{split}
\end{equation}
for all $z \in \rho(A_\beta)$ and $f\in L^2(\Omega)$. Here we have used in the last step that 
$$g=(A_{\beta} - z I_{L^2(\Omega)})^{-1}f\in \dom(A_{\beta})$$ 
satisfies the boundary 
condition $\beta\gamma_Dg - \gamma_\nu g=0$.
It follows from \eqref{gammad2} and \eqref{yes} that
\begin{equation*}
 \bigl(\gamma({\ol z})^*\bigr)^{(q)}=q! \, \gamma({\ol z})^*(A_{\beta} - z I_{L^2(\Omega)})^{-q} 
 = - q! \, \gamma_D(A_{\beta} - z I_{L^2(\Omega)})^{-(q+1)}, 
\end{equation*}
and hence,  
\begin{equation*}
 \ran \bigl((\gamma({\ol z})^*)^{(q)}\bigr)\subset H^{2q+3/2}(\partial\Omega)
 \end{equation*}
by \eqref{smoothi} and \eqref{gamss}. From Lemma~\ref{usel} with $s=2q+(3/2)$ and $t=0$ one concludes that  
\begin{equation}\label{ass2w}
 \bigl(\gamma({\ol z})^*\bigr)^{(q)} \in\sS_r\bigl(L^2(\Omega),L^2(\partial\Omega)\bigr), 
 \quad r>(n-1)/[2q+(3/2)],
\end{equation}
for all $z \in \rho(A_{\beta})$, $q \in \bbN_0$, and hence by \eqref{gammad1} also 
\begin{equation}\label{ass1w}
 \overline{\gamma(z)}^{(p)}\in\sS_r\bigl(L^2(\partial\Omega),L^2(\Omega)\bigr), 
 \quad r> (n-1)/[2p+(3/2)],
\end{equation}
for all $z \in \rho(A_{\beta})$, $p \in \bbN_0$. Furthermore, 
\begin{equation}\label{mqqq}
 \frac{d^j}{dz^j} \overline{M (z)}=j! \, \gamma({\ol z})^* 
 (A_{\beta} - z I_{L^2(\Omega)})^{-(j-1)}\overline{\gamma(z)}, \quad j \in \bbN, 
\end{equation}
by \eqref{gammad3} and with the help of \eqref{yes} it follows in the same way as in \eqref{ass2w} that 
\begin{equation*}
\gamma({\ol z})^*(A_{\beta} - z I_{L^2(\Omega)})^{-(j-1)} 
= - \gamma_D (A_{\beta} - z I_{L^2(\Omega)})^{-j} \in 
\sS_x\bigl(L^2(\Omega),L^2(\partial\Omega)\bigr)
 \end{equation*}
for $x> (n-1)/[2j-(1/2)]$. Moreover, $\overline{\gamma(z)}\in\sS_y(L^2(\partial\Omega),L^2(\Omega))$ for $y> 2 (n-1)/3$ by \eqref{ass1w} and hence 
it follows from \eqref{mqqq} and the well-known property $PQ\in\sS_w$ for $P\in\sS_x$, $Q\in\sS_y$, and $x^{-1}+y^{-1}=w^{-1}$, that 
 \begin{equation}\label{ass3w}
 \frac{d^j}{dz^j} \overline{M(z)}\in\sS_w\bigl(L^2(\partial\Omega\bigr), 
 \quad w >  (n-1)/(2j+1), \; z \in \rho(A_{\beta}), \; j \in \bbN.
\end{equation}
One observes that
\begin{equation*}
 \frac{d}{dz} \overline{M(z)}^{-1}= - \overline{M(z)}^{-1}\left(\frac{d}{dz} \overline{M(z)}\right)\overline{M(z)}^{-1}, \quad z \in \rho(A_{\beta})\cap\rho(A_{\beta_1}),
\end{equation*}
that $\overline{M(z)}^{-1}$ is bounded, and by \eqref{ass3w} that also 
\begin{equation}\label{ass4w}
 \frac{d^j}{dz^j} \overline{M (z)}^{-1}\in\sS_w\bigl(L^2(\partial\Omega)\bigr), 
 \quad w > (n-1)/(2j+1), \; z \in \rho(A_{\beta})\cap\rho(A_{\beta_1}), \; j \in \bbN;
\end{equation}
we leave the formal induction step to the reader. Therefore, 
\begin{equation*}
\begin{split}
\bigl(  M(z)^{-1} \gamma({\ol z})^* \bigr)^{(q)}&=\bigl( \overline{M(z)}^{-1} \gamma({\ol z})^*\bigr)^{(q)} \\
&=\sum_{\substack{p+m=q \\[0.2ex] p,m\ge0}} \begin{pmatrix} q \\ p \end{pmatrix} 
 \bigl(\overline{M(z)}^{-1}\bigr)^{(p)} \bigl(\gamma({\ol z})^*\bigr)^{(m)}\\
 &=\overline{M(z)}^{-1} \bigl(\gamma({\ol z})^*\bigr)^{(q)}+ \sum_{\substack{p+m=q \\[0.2ex] p > 0, m\geq 0}} \begin{pmatrix} q \\ p \end{pmatrix} 
 \bigl(\overline{M(z)}^{-1}\bigr)^{(p)} \bigl(\gamma({\ol z})^*\bigr)^{(m)}, 
 \end{split}
 \end{equation*}
and one has $\overline{M(z)}^{-1} (\gamma({\ol z})^*)^{(q)}\in\sS_r\big(L^2(\Omega),L^2(\partial\Omega)\big)$ for $r> (n-1)/[2q+(3/2)]$ by \eqref{ass2w} and each
summand (and hence also the finite sum) on the right-hand side is in $\sS_r\big(L^2(\Omega),L^2(\partial\Omega)\big)$ for 
$r> (n-1)/[2p+1+2m+(3/2)] = (n-1)/[2q+(5/2)]$, which follows from
\eqref{ass4w} and \eqref{ass1w}. Hence one has 
\begin{align}\label{hohoja} 
\begin{split} 
& \bigl( M(z)^{-1} \gamma({\ol z})^*\bigr)^{(q)}\in \sS_r\bigl(L^2(\Omega),L^2(\partial\Omega)\bigr),  \\   
& \, r > (n-1)/[2q+(3/2)], \; z \in \rho(A_{\beta})\cap\rho(A_{\beta_1}).
\end{split} 
\end{align}
From \eqref{ass1w} and \eqref{hohoja} one then concludes that 
\begin{align*}
& \overline{\gamma(z)}^{(p)}\bigl( M(z)^{-1} \gamma({\ol z})^*\bigr)^{(q)}\in\sS_r\bigl(L^2(\Omega)\bigr),  \\ 
& r > (n-1)/[2(p+q)+3] = (n-1)/(4k+3),
\end{align*}
and since $k\geq (n-3)/4$, one has $1> (n-1)/(4k+3)$, that is, the trace class condition~\eqref{ass1q} is satisfied. The same argument shows that \eqref{ass2q} is satisfied.
Finally, \eqref{ass3q} follows from \eqref{ass3w} and the fact that $k\geq (n-3)/4$ implies
\begin{equation*}
 \frac{2k+1}{j}\geq\frac{n-1}{2j}>\frac{n-1}{2j+1},\quad j=1,\dots,2k+1.
\end{equation*} 

Hence the assumptions in Theorem~\ref{mainssf2} are satisfied with $S$ in \eqref{selli}, the quasi boundary triple 
in \eqref{qbtelli1}--\eqref{qbtelli2} and the corresponding $\gamma$-field and Weyl function in \eqref{welli} and \eqref{mmchen}, respectively. Now Theorem~\ref{mainssf2} yields
assertion $(i)$ in Theorem~\ref{nrthmelli} with $A_{\beta_0}$ replaced by $A_\beta$ and
for any orthonormal basis $(\varphi_j)_{j \in J}$ in $L^2(\partial\Omega)$ the function 
  \begin{equation*}
   \xi_1(\lambda) 
   =\sum_{j \in J} \lim_{\varepsilon\downarrow 0}\pi^{-1} \bigl(\Im\bigl( \log (\cM_1(\lambda+i\varepsilon)) \bigr)
   \varphi_j,\varphi_j\bigr)_{L^2(\partial\Omega)}  \, \text{ for a.e.~$\lambda \in \dR$},
  \end{equation*}
is a spectral shift function for the pair $\{A_\beta,A_{\beta_1}\}$ such that $\xi_1(\lambda)=0$ for 
$\lambda < \min(\sigma(A_\beta))\leq \min(\sigma(A_{\beta_1}))$ and the trace formula
\begin{equation*}
 \tr_{L^2(\Omega)}\bigl( (A_{\beta_1} - z I_{L^2(\Omega)})^{-(2k+1)} 
 - (A_\beta - z I_{L^2(\Omega)})^{-(2k+1)}\bigr) 
 = - (2k+1) \int_\dR \frac{\,\xi_1(\lambda)\, d\lambda}{(\lambda - z)^{2k+2}}
\end{equation*}
is valid for all $z \in \rho(A_\beta)\cap\rho(A_{\beta_1})$.

The same construction as above with $\beta_1$ replaced by $\beta_0$ yields an analogous representation for a spectral shift function $\xi_0$ 
of the pair $\{A_\beta,A_{\beta_0}\}$.
Finally it follows from the considerations in the end of Section~\ref{ssfsec} (see \eqref{ssfab}) 
that 
\begin{equation*}
\begin{split}
   \xi(\lambda)&=\xi_1(\lambda)-\xi_0(\lambda)\\
   &=\sum_{j \in J} \lim_{\varepsilon\downarrow 0}\pi^{-1} \Bigl(\bigl(\Im\bigl( \log (\cM_1(\lambda+i\varepsilon)) - \log (\cM_0(\lambda + i \varepsilon))\bigr)\bigr)
   \varphi_j,\varphi_j\Bigr)_{L^2(\partial\Omega)} 
\end{split}
   \end{equation*}
   for a.e.~$\lambda \in \dR$
is a spectral shift function for the pair $\{A_{\beta_0},A_{\beta_1}\}$ such that $\xi(\lambda)=0$ for 
$\lambda < \min(\sigma(A_{\beta}))\leq \min(\sigma(A_{\beta_p}))$, $p=0,1$. This completes the proof of Theorem~\ref{nrthmelli}.
\end{proof}

In space dimensions $n=2$ and $n=3$ one can choose $k=0$ in Theorem~\ref{nrthmelli}, and hence the resolvent difference of $A_{\beta_1}$ and 
$A_{\beta_0}$ is a trace class operator. In this situation Corollary~\ref{mainthmcorchen} leads to the following slightly stronger statement.

\begin{corollary}\label{mainthmcorchen2}
Let the assumptions be as in Theorem~\ref{nrthmelli} and suppose that $n=2$ or $n=3$. 
Then the following assertions $(i)$--$(iii)$ hold:

\begin{itemize}
  \item [$(i)$]
The difference of the resolvents of $A_{\beta_1}$ and $A_{\beta_0}$ is 
a trace class operator, that is,
\begin{equation*}
 \big[(A_{\beta_1} - z I_{L^2(\Omega)})^{-1}-(A_{\beta_0} - z I_{L^2(\Omega)})^{-1}\big] \in \sS_1\bigl(L^2(\Omega)\bigr)
\end{equation*}
holds for all $z\in\rho(A_{\beta_1})\cap\rho(A_{\beta_0})$. 
  \item [$(ii)$] $\Im(\log (\cM_p(z)))\in\sS_1(L^2(\partial\Omega))$ for all $z\in\dC\backslash\dR$ and $p=0,1$, and the limit 
  $$\Im\bigl(\log(\cM_p(\lambda+i 0))\bigr):=\lim_{\varepsilon\downarrow 0}\Im\bigl(\log(\cM_p(\lambda+i\varepsilon))\bigr)$$ 
  exists for a.e.~$\lambda \in \dR$ and $p=0,1$ in $\sS_1(L^2(\partial\Omega))$. 
  \item [$(iii)$] The function
  \begin{equation*}
   \xi(\lambda)=\pi^{-1} \tr_{L^2(\partial\Omega)}\bigl(\Im\big(\log(\cM_1(\lambda + i0))-\log(\cM_0(\lambda + i0)) \big)\bigr) \, \text{ for a.e.~$\lambda \in \dR$}, 
  \end{equation*}
is a spectral shift function for the pair $\{A_{\beta_0},A_{\beta_1}\}$ such that $\xi(\lambda)=0$ for 
$\lambda < \min(\sigma(A_\beta))$ and the trace formula
\begin{equation*}
 \tr_{L^2(\Omega)}\bigl( (A_{\beta_1} - z I_{L^2(\Omega)})^{-1} 
 - (A_{\beta_0} - z I_{L^2(\Omega)})^{-1}\bigr) 
 = - (2k+1) \int_\dR \frac{\,\xi(\lambda)\, d\lambda}{(\lambda - z)^2}
\end{equation*}
is valid for all $z \in \rho(A_{\beta_0})\cap\rho(A_{\beta_1})$.
\end{itemize}
\end{corollary}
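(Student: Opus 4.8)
The plan is to deduce Corollary~\ref{mainthmcorchen2} from Corollary~\ref{mainthmcorchen} by applying the latter to the two pairs $\{A_\beta, A_{\beta_1}\}$ and $\{A_\beta, A_{\beta_0}\}$, reusing verbatim the quasi boundary triple and the $\gamma$-field/Weyl function computations from the proof of Theorem~\ref{nrthmelli}, and then combining the two resulting spectral shift functions via the additivity argument at the end of Section~\ref{ssfsec} (cf.\ \eqref{ssfab}). Since $n \in \{2,3\}$, the relevant difference of resolvents will be trace class itself, so the stronger conclusions $(i)$--$(iii)$ will follow from Corollary~\ref{mainthmcorchen} rather than only from Theorem~\ref{mainssf2}.

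First I would recall from Step~1 of the proof of Theorem~\ref{nrthmelli} that $\{L^2(\partial\Omega), \Gamma_0, \Gamma_1\}$ with $\Gamma_0, \Gamma_1$ as in \eqref{qbtelli1}--\eqref{qbtelli2} is a quasi boundary triple for $S^* = \overline{T}$ with $A_\beta = T \upharpoonright \ker(\Gamma_0)$ and $A_{\beta_1} = T \upharpoonright \ker(\Gamma_1)$, whose $\gamma$-field and Weyl function are given by \eqref{welli}, \eqref{mmchen}, and that $\overline{M(z)} = \cM_1(z)$ by Step~2. Step~2 of that proof also established the sign condition \eqref{sign333} for the pair $\{A_\beta, A_{\beta_1}\}$ and the boundedness of $M(z_1)$ and $M(z_2)^{-1}$ for $z_1, z_2 \in \dC \backslash \dR$. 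The only additional hypothesis of Corollary~\ref{mainthmcorchen} that must be checked is $\overline{\gamma(z_0)} \in \sS_2(L^2(\partial\Omega), L^2(\Omega))$ for some $z_0 \in \rho(A_\beta)$, and this is exactly the point at which the restriction $n \in \{2,3\}$ is used: by \eqref{ass1w} with $p = 0$ one has $\overline{\gamma(z_0)} \in \sS_r(L^2(\partial\Omega), L^2(\Omega))$ for every $r > 2(n-1)/3$, and $2(n-1)/3 < 2$ precisely when $n \le 3$, so indeed $\overline{\gamma(z_0)} \in \sS_2(L^2(\partial\Omega), L^2(\Omega))$.

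With the hypotheses of Corollary~\ref{mainthmcorchen} verified for $\{A_\beta, A_{\beta_1}\}$ — and, by the identical construction with $\beta_1$ replaced by $\beta_0$ (so that $\overline{M(z)} = \cM_0(z)$ for the corresponding triple), also for $\{A_\beta, A_{\beta_0}\}$ — I would conclude: the resolvent differences $[(A_{\beta_p} - z I_{L^2(\Omega)})^{-1} - (A_\beta - z I_{L^2(\Omega)})^{-1}]$, $p = 0,1$, are trace class for $z \in \rho(A_\beta) \cap \rho(A_{\beta_p})$; $\Im(\log(\cM_p(z))) \in \sS_1(L^2(\partial\Omega))$ for $z \in \dC \backslash \dR$ and the limits $\Im(\log(\cM_p(\lambda + i0)))$ exist in $\sS_1(L^2(\partial\Omega))$ for a.e.\ $\lambda \in \dR$; and $\xi_p(\lambda) := \pi^{-1} \tr_{L^2(\partial\Omega)}(\Im(\log(\cM_p(\lambda + i0))))$ is a spectral shift function for $\{A_\beta, A_{\beta_p}\}$ with $\xi_p(\lambda) = 0$ for $\lambda < \min(\sigma(A_\beta))$. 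Assertion $(i)$ then follows by writing $(A_{\beta_1} - z I_{L^2(\Omega)})^{-1} - (A_{\beta_0} - z I_{L^2(\Omega)})^{-1}$ as the difference of these two trace class operators, and $(ii)$ is immediate. For $(iii)$ I would set $\xi := \xi_1 - \xi_0$; by \eqref{ssfab} this is a spectral shift function for $\{A_{\beta_0}, A_{\beta_1}\}$, it vanishes for $\lambda < \min(\sigma(A_\beta))$ since each $\xi_p$ does, and linearity of the trace (legitimate since both arguments lie in $\sS_1$) yields $\xi(\lambda) = \pi^{-1} \tr_{L^2(\partial\Omega)}(\Im(\log(\cM_1(\lambda + i0)) - \log(\cM_0(\lambda + i0))))$; the trace formula with $k = 0$ follows from the corresponding formulas for the two pairs.

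Since essentially all the analytic work — the quasi boundary triple construction, the $\sS_p$-estimates, and the logarithm theory of Section~\ref{logsec} — has already been carried out in the proofs of Theorem~\ref{nrthmelli} and Corollary~\ref{mainthmcorchen}, there is no serious obstacle here. The one point that genuinely requires $n \in \{2,3\}$, and which I would emphasize, is the membership $\overline{\gamma(z_0)} \in \sS_2(L^2(\partial\Omega), L^2(\Omega))$, equivalently the inequality $2(n-1)/3 < 2$: it is precisely this that upgrades the use of Theorem~\ref{mainssf2} (valid for all admissible $n$ with a suitable $k \geq (n-3)/4$) to the use of Corollary~\ref{mainthmcorchen} with $k = 0$, and hence produces the stronger trace class conclusion for the resolvent difference itself together with the cleaner $\sS_1$-norm limit representation of the spectral shift function.
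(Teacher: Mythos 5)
Your proposal is correct and follows exactly the route the paper intends: the paper gives no separate proof of this corollary, but derives it by noting that for $n=2,3$ one may take $k=0$, applying Corollary~\ref{mainthmcorchen} to the pairs $\{A_\beta,A_{\beta_1}\}$ and $\{A_\beta,A_{\beta_0}\}$ with the quasi boundary triple, sign condition, and boundedness facts from the proof of Theorem~\ref{nrthmelli}, and combining via \eqref{ssfab} — and your verification that the dimension restriction enters precisely through $\overline{\gamma(z_0)}\in\sS_2$ (i.e.\ $2(n-1)/3<2$) is the right key point.
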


\section{Schr\"{o}dinger operators with compactly supported potentials}\label{ap11sec}

In this section we determine a spectral shift function for the self-adjoint operators $\{-\Delta,-\Delta+V\}$ in $L^2(\dR^n)$, $n \in \bbN$, $n\geq 2$, where 
it is assumed that $V\in L^\infty(\dR^n)$ is a compactly supported real-valued function. Thus we consider the self-adjoint operators  
\begin{equation}\label{abva}
 A=-\Delta,\quad \dom(A)=H^2(\dR^n),
\end{equation}
and
\begin{equation}\label{abvb}
 B=-\Delta+V,\quad \dom(B)=H^2(\dR^n),
\end{equation}
in $L^2(\bbR^n)$,
and we fix an open ball $\cB_+\subset\dR^n$ such that $\text{\rm supp}\, (V) \subset\cB_+$. The $n-1$ dimensional
sphere $\partial\cB_+$ is denoted by $\cS$. Besides the operators $A$ and $B$ 
we shall also make use of the self-adjoint Dirichlet realizations 
\begin{equation}\label{apbva}
A_+=-\Delta, \quad \dom(A_+)=H^2(\cB_+)\cap H^1_0(\cB_+),
\end{equation}
and
\begin{equation}\label{apbvb}
B_+=-\Delta+V,\quad \dom(B_+)=H^2(\cB_+)\cap H^1_0(\cB_+),
\end{equation}
of $-\Delta$ and $-\Delta+V$ in $L^2(\cB_+)$.
The spectrum of both operators $A_+$ and $B_+$ is discrete
and bounded from below. The corresponding eigenvalue counting functions are denoted by 
$N(\, \cdot \,,A_+)$ and $N(\, \cdot \,,B_+)$, respectively; one 
recalls that $N(\lambda,A_+)$ and $N(\lambda,B_+)$ stand for the total number of eigenvalues (with multiplicities counted) of $A_+$ and $B_+$ in the open interval 
$(-\infty, \lambda)$, $\lambda \in \bbR$.

The main ingredient in the proof of Theorem~~\ref{thmvv} below is a decoupling technique for the operators $A$ and $B$, where artificial 
Dirichlet boundary conditions on the sphere $\cS$ will be imposed. We shall use the extension of the $L^2(\cS)$ scalar product onto the dual pair
$H^{1/2}(\cS)\times H^{-1/2}(\cS)$ via
\begin{equation}\label{ext}
 \langle\varphi,\psi\rangle=\bigl(\imath\varphi,\widetilde{\imath^{-1}}\psi\bigr)_{L^2(\cS)},\quad \varphi\in H^{1/2}(\cS),\,\,\psi\in H^{-1/2}(\cS),
\end{equation}
where $\imath$ is a uniformly positive self-adjoint operator in $L^2(\cS)$ defined on the dense subspace $H^{1/2}(\cS)$ 
(and in the following $\iota$ is regarded as an isomorphism from $H^{1/2}(\cS)$ onto $L^2(\cS)$),
and 
$\widetilde{\imath^{-1}}$ is the extension of $\imath^{-1}$ to an isomorphism from $H^{-1/2}(\cS)$ onto $L^2(\cS)$. A typical and convenient choice for
$\imath$ is $(-\Delta_\cS+I_{L^2(\cS)})^{1/4}$, where $-\Delta_\cS$ is the Laplace-Beltrami operator on the sphere $\cS$; for other choices see also
\cite[Remark 5.3]{BMN15}.

Since $\langle\cdot,\cdot\rangle$
in \eqref{ext} is an extension of the $L^2(\cS)$ scalar product, Green's identity can also be written in the form
\begin{equation}\label{green11}
 (-\Delta f_+,g_+)_{L^2(\cB_+)}-(f_+,-\Delta g_+)_{L^2(\cB_+)}=\langle\gamma_D^+ f_+,\gamma_N^+ g_+ \rangle-\langle\gamma_N^+ f_+,\gamma_D^+ g_+\rangle
\end{equation}
for $f_+,g_+\in H^2(\cB_+)$. Here $\gamma_D^+$ and $\gamma_N^+$ denote the Dirichlet and Neumann trace operators in \eqref{tracemapelli} (with $\Omega$ and $\partial\Omega$ replaced by
$\cB_+$ and $\cS$, respectively).
Let $\cB_-:=\dR^n\backslash\overline\cB_+$ and let  $\gamma_D^-$ and $\gamma_N^-$ be the Dirichlet and Neumann trace operators on $\cB_-$;
the normal vector in the definition of $\gamma_N^-$ is pointing in the outward direction of $\cB_-$ and hence opposite to the normal of $\cB_+$. 
Besides \eqref{green11} we also have the corresponding Green's identity on $\cB_-$, that is, 
\begin{equation}\label{green12}
 (-\Delta f_-,g_-)_{L^2(\cB_-)}-(f_-,-\Delta g_-)_{L^2(\cB_-)}=\langle\gamma_D^- f_-,\gamma_N^- g_- \rangle-\langle\gamma_N^- f_-,\gamma_D^- g_-\rangle
\end{equation}
holds for all $f_-,g_-\in H^2(\cB_-)$.

Next we define Dirichlet-to-Neumann maps associated to 
$-\Delta$ and $-\Delta+V$ on $\cB_+$ and $-\Delta$ on $\cB_-$ as operators from $H^{1/2}(\cS)$ to $H^{-1/2}(\cS)$. First, we recall that for $z \not \in \sigma(A_+)$ and
$\varphi\in H^{1/2}(\cS)$ there exists a unique solution $f_z \in H^1(\cB_+)$ of the boundary value problem
\begin{equation*}
 -\Delta f_z = z f_z  ,\quad \gamma_D^+ f_z =\varphi.
\end{equation*}
The corresponding solution operator is 
\begin{equation*} 
P_+(z):H^{1/2}(\cS)\rightarrow H^1(\cB_+), \quad \varphi\mapsto f_z,
\end{equation*} 
and for $z \not \in \sigma(A_+)$, the {\it Dirichlet-to-Neumann map} $\cD_+(z)$ 
{\it associated to $-\Delta$ 
in $\cB_+$} is defined by
\begin{equation*}
 \cD_+(z):H^{1/2}(\cS)\rightarrow H^{-1/2}(\cS),\quad \varphi\mapsto \gamma_N^+ P_+(z)\varphi.
\end{equation*}
Similarly, for $\zeta \not \in \sigma(B_+)$ and
$\psi\in H^{1/2}(\cS)$, there exists a unique solution $g_\zeta \in H^1(\cB_+)$ of the boundary value problem
\begin{equation*}
 (-\Delta +V)g_\zeta = \zeta g_\zeta ,\quad \gamma_D^+ g_\zeta = \psi.
\end{equation*}
The corresponding solution operator is 
\begin{equation*} 
P_+^V(\zeta):H^{1/2}(\cS)\rightarrow H^1(\cB_+), \quad \psi\mapsto g_\zeta, 
\end{equation*} 
and for 
$\zeta \not \in \sigma(B_+)$ the {\it Dirichlet-to-Neumann map} $\cD_+^V(\zeta)$ 
{\it associated to $-\Delta+V$ 
in} $\cB_+$ is defined by
\begin{equation*}
 \cD_+^V(\zeta):H^{1/2}(\cS)\rightarrow H^{-1/2}(\cS),\quad \psi\mapsto \gamma_N^+ P_+^V(\zeta)\psi.
\end{equation*}
Furthermore, for $\zeta' \not \in  [0,\infty)$ and $\xi\in H^{1/2}(\cS)$ there exists a unique solution $h_{\zeta'}\in H^1(\cB_-)$ of the boundary value problem
\begin{equation*}
 -\Delta h_{\zeta'} = \zeta' h_{\zeta'} ,\quad \gamma_D^- h_{\zeta'}=\xi.
\end{equation*}
As above the corresponding solution operator is 
\begin{equation*} 
P_-(\zeta'):H^{1/2}(\cS)\rightarrow H^1(\cB_-), \quad \xi\mapsto h_{\zeta'}, 
\end{equation*} 
and for $\zeta' \not \in  [0,\infty)$, the {\it Dirichlet-to-Neumann map} $\cD_-(\zeta')$ {\it associated 
to $-\Delta$ in} $\cB_-$ is defined by
\begin{equation*}
 \cD_-(\zeta'):H^{1/2}(\cS)\rightarrow H^{-1/2}(\cS),\quad \xi\mapsto \gamma_N^- P_-(\zeta')\xi.
\end{equation*}

One recalls that the Dirichlet-to-Neumann maps $\cD_+(z)$, $\cD_+^V(\zeta)$, and $\cD_-(\zeta')$ above are bounded operators from $H^{1/2}(\cS)$ to $H^{-1/2}(\cS)$.
Moreover, for $z\in\dC\backslash\dR$, each of the Dirichlet-to-Neumann maps is boundedly invertible and the same is true for the sums
\begin{equation*}
 \cD_+(z)+ \cD_-(z):H^{1/2}(\cS)\rightarrow H^{-1/2}(\cS),\quad z\in\dC\backslash\dR,
 \end{equation*}
 and
 \begin{equation*}
 \cD_+^V(z)+ \cD_-(z):H^{1/2}(\cS)\rightarrow H^{-1/2}(\cS),\quad z\in\dC\backslash\dR 
\end{equation*}
(this can be shown in the same way as in Step 2 of the proof of Theorem~\ref{dddthm}).
Hence, the operators
\begin{equation}\label{nvn1}
 \sN(z)=\imath \bigl(\cD_+(z)+ \cD_-(z)\bigr)^{-1}\,\widetilde\imath:L^2(\cS)\rightarrow L^2(\cS), 
 \quad z\in\dC\backslash\dR,
\end{equation}
and
\begin{equation}\label{nvn2}
 \sN_V(z)=\imath \bigl(\cD_+^V(z)+ \cD_-(z)\bigr)^{-1}\,\widetilde\imath:L^2(\cS)\rightarrow L^2(\cS), 
  \quad z\in\dC\backslash\dR, 
\end{equation}
are everywhere defined and bounded in $L^2(\cS)$. 

In the next theorem we obtain a representation for a spectral shift function for $\{A,B\}$ in \eqref{abva}--\eqref{abvb} via a decoupling technique 
and Theorem~\ref{mainssf2}.
The considerations in the beginning of Step 1 of the proof of Theorem~\ref{thmvv} are similar as in \cite[Section 5.2]{BMN15} and hence some details
are omitted.

\begin{theorem}\label{thmvv}
Let $n \in \bbN$, $n\geq 2$, and $k \in \bbN$, $k> (n-2)/4$, and suppose that $V\in L^\infty(\dR^n)$ is real-valued with support in the open ball $\cB_+$. 
In addition, let $\sN(z)$ and $\sN_V(z)$ be 
as in \eqref{nvn1}--\eqref{nvn2}, 
and denote the eigenvalue counting functions of the Dirichlet operators $A_+$ and $B_+$
in $L^2(\cB_+)$ by $N(\, \cdot \,,A_+)$ and $N(\, \cdot \,,B_+)$, respectively. 
Then the following assertions $(i)$ and $(ii)$ hold:
 \begin{itemize}
  \item [$(i)$] The difference of the $2k+1$th-powers of the resolvents of $A$ and $B$ is 
a trace class operator, that is,
\begin{equation*}
\big[(B  - z I_{L^2(\bbR^n)})^{-(2k+1)}-(A - z I_{L^2(\bbR^n)})^{-(2k+1)}\big] \in \sS_1\bigl(L^2(\dR^n)\bigr)
\end{equation*}
holds for all $z \in \rho(B)=\rho(A)\cap\rho(B)$. 
 \item [$(ii)$] For any orthonormal basis $(\varphi_j)_{j \in J}$ in $L^2(\cS)$ the function 
  \begin{align*}
   \xi(\lambda) 
   & =\sum_{j \in J} \lim_{\varepsilon\downarrow 0}\pi^{-1} \Bigl(\bigl(\Im\bigl( \log (\sN(\lambda+i\varepsilon)) - \log (\sN_V(\lambda + i \varepsilon))\bigr)\bigr)
   \varphi_j,\varphi_j\Bigr)_{L^2(\cS)} \\
   & \quad +
   N(\lambda,B_+) - N(\lambda,A_+)\, \text{ for a.e. $\lambda\in\dR$,}
  \end{align*} 
is a spectral shift function for the pair $\{A,B\}$ such that $\xi(\lambda)=0$ for 
$\lambda < \min(\sigma(B))\leq 0$ and the trace formula
\begin{equation*}
 \tr_{L^2(\bbR^n)}\bigl( (B - z I_{L^2(\bbR^n)})^{-(2k+1)}-(A - z I_{L^2(\bbR^n)})^{-(2k+1)}\bigr) 
 = - (2k+1) \int_\dR \frac{\xi(\lambda)\, d\lambda}{(\lambda - z)^{2k+2}}
\end{equation*}
is valid for all $z \in \rho(B)=\rho(A)\cap\rho(B)$.
\end{itemize}
\end{theorem}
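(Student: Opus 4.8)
The plan is to reduce the statement to Theorem~\ref{mainssf2} by a Glazman-type decoupling along the sphere $\cS$. I introduce the Dirichlet realization $C=-\Delta$ in $L^2(\cB_-)$ and the orthogonal sums $A_D:=A_+\oplus C$ and $B_D:=B_+\oplus C$ in $L^2(\dR^n)=L^2(\cB_+)\oplus L^2(\cB_-)$, obtained from $A$ and $B$ by imposing additional Dirichlet conditions on $\cS$; by Dirichlet bracketing $A\le A_D$ and $B\le B_D$ in the form sense, so the pairs $\{A,A_D\}$ and $\{B,B_D\}$ satisfy a sign condition of the type \eqref{sign333} for all sufficiently negative spectral parameters. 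Following Step~1 of the proof of Theorem~\ref{nrthmelli} (and \cite[Section~5.2]{BMN15}), I would construct, via Theorem~\ref{ratemal}, a quasi boundary triple $\{L^2(\cS),\Gamma_0,\Gamma_1\}$ for the decoupled operator $Tf=(-\Delta f_+)\oplus(-\Delta f_-)$ on $\{f_\pm\in H^2(\cB_\pm):\gamma_D^+f_+=\gamma_D^-f_-\}$ with $\Gamma_0 f=\widetilde{\imath^{-1}}(\gamma_N^+f_++\gamma_N^-f_-)$ and $\Gamma_1 f=\imath\gamma_D f$, where $\imath$ is the isomorphism from \eqref{ext}; then the Green identities \eqref{green11}--\eqref{green12} give the abstract Green identity, $A=T\upharpoonright\ker(\Gamma_0)$ is the free Laplacian (transmission condition) and $A_D=T\upharpoonright\ker(\Gamma_1)$, and the mapping properties of the Poisson operators $P_\pm(z)$ together with $\gamma_N^\pm f_z=\cD_\pm(z)\gamma_D f_z$ on $\ker(T-zI)$ identify the corresponding Weyl function as $\sN$. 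The analogous construction with $-\Delta$ replaced by $-\Delta+V$ on $\cB_+$ produces a quasi boundary triple whose $\ker(\Gamma_0)$ is $B$, whose $\ker(\Gamma_1)$ is $B_D$, and whose Weyl function is $\sN_V$.

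The second, and technically most demanding, step is to verify the Schatten hypotheses \eqref{ass1}--\eqref{ass3} of Theorem~\ref{mainssf2} for the $\gamma$-fields and Weyl functions of these two triples. Exactly as in Step~3 of the proof of Theorem~\ref{nrthmelli} one has $\gamma(\ol z)^*=\Gamma_1(A-z I)^{-1}=\imath\gamma_D(A-z I)^{-1}$ (and its $B$-analogue), and using the global elliptic smoothing $(A-z I)^{-1}\colon H^s(\dR^n)\to H^{s+2}(\dR^n)$, the trace bounds $\gamma_D\colon H^s(\dR^n)\to H^{s-1/2}(\cS)$, the order of $\imath$, the Schatten--von Neumann estimate of Lemma~\ref{usel} type on the compact hypersurface $\cS$, and the formulas \eqref{gammad1}--\eqref{gammad3} for the $z$-derivatives of $\gamma$ and $M$, one controls all the required $\sS_p$-classes. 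The restriction $k>(n-2)/4$, i.e.\ $2(2k+1)>n$, is precisely what makes the resulting exponents admissible; it is also the condition under which $(A_+-zI)^{-(2k+1)}$ and $(B_+-zI)^{-(2k+1)}$ are trace class in $L^2(\cB_+)$ by Weyl's asymptotics for the Dirichlet eigenvalues, which disposes of the interior contribution. The main obstacle is purely the Schatten-class bookkeeping through the many products and derivatives, in particular keeping track of the half-derivative lost in passing through $\imath$; everything else in this step is routine.

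Applying Theorem~\ref{mainssf2} to $\{A,A_D\}$ and to $\{B,B_D\}$ then yields the trace class property in (i) for those two pairs (recovering the trace class result going back to Birman and Grubb), together with spectral shift functions $\xi_1$ (given by the $\sN$-formula) for $\{A,A_D\}$ and $\xi_2$ (given by the $\sN_V$-formula) for $\{B,B_D\}$. Since $A,A_D,B,B_D$ have no spectrum below $\min(\sigma(B))$ and the operators $\cD_\pm(\lambda)$, $\cD_+^V(\lambda)$ are positive below the respective spectra, the operators $\sN(\lambda)$ and $\sN_V(\lambda)$ are positive there with spectrum in some $(\varepsilon,\infty)$, so the second part of Theorem~\ref{nthm} forces $\xi_1(\lambda)=\xi_2(\lambda)=0$ for a.e.\ $\lambda<\min(\sigma(B))$. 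For the remaining pair $\{A_+,B_+\}$, both operators have discrete spectrum bounded from below, whence $\xi_+:=N(\,\cdot\,,B_+)-N(\,\cdot\,,A_+)$ is a spectral shift function for $\{A_+,B_+\}$ and hence for $\{A_D,B_D\}$. Decomposing $\varphi(B)-\varphi(A)=[\varphi(B)-\varphi(B_D)]+[\varphi(B_D)-\varphi(A_D)]+[\varphi(A_D)-\varphi(A)]$ under the trace and invoking the additivity of spectral shift functions exactly as in the argument leading to \eqref{ssfab}, one concludes that $\xi=\xi_1-\xi_2+\xi_+$ is a spectral shift function for $\{A,B\}$; combining the limits as in the proof of Proposition~\ref{nprop} and noting that $\ol{\sN(z)}=\sN(z)$ since $\sN(z)$ is bounded and everywhere defined gives the stated formula, part (i) follows by adding the three trace class differences, and $\xi(\lambda)=0$ for $\lambda<\min(\sigma(B))\le 0$ follows from the vanishing of $\xi_1,\xi_2,\xi_+$ there (using $\min(\sigma(B))\le\min(\sigma(A_+))$ and $\min(\sigma(B))\le\min(\sigma(B_+))$ by Dirichlet bracketing), while the trace formula is the special case $\varphi(\lambda)=(\lambda-z)^{-(2k+1)}$.
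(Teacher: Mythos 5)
Your proposal follows essentially the same route as the paper's proof: the same Glazman decoupling $A_D=A_+\oplus C$, $B_D=B_+\oplus C$ with Dirichlet bracketing for the sign condition, the same quasi boundary triple with $\Gamma_0 f=\widetilde{\imath^{-1}}(\gamma_N^+f_++\gamma_N^-f_-)$, $\Gamma_1 f=\imath\gamma_D f$ and Weyl functions $\sN$, $\sN_V$, the same verification of the $\sS_p$-hypotheses of Theorem~\ref{mainssf2} via trace maps and Lemma~\ref{usel}, and the same final combination $\xi=\xi_1-\xi_2+\xi_+$ with $\xi_+=N(\cdot,B_+)-N(\cdot,A_+)$ for the discrete interior pair. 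The only point to adjust is the appeal to global smoothing $(B-zI_{L^2(\dR^n)})^{-1}:H^s(\dR^n)\to H^{s+2}(\dR^n)$, which fails for merely bounded $V$ when $s\geq 1$; since $\mathrm{supp}\,V\subset\cB_+$ is disjoint from a neighborhood $\cO$ of $\cS$, one uses instead the local smoothing in the spaces $H^{\ell}_\cO(\dR^n)$ near $\cS$, exactly as in the paper.
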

\begin{proof}
Besides the self-adjoint operators $A=-\Delta$ and $B=-\Delta+V$ in \eqref{abva} and \eqref{abvb}, and the Dirichlet realizations $A_+=-\Delta$ and $B_+=-\Delta+V$
in $L^2(\cB_+)$ in \eqref{apbva} and \eqref{apbvb} we shall also make use of the Dirichlet realization $A_-$ of $-\Delta$ in $L^2(\cB_-)$ given by
\begin{equation}
 A_-=-\Delta,\quad \dom(A_-)=H^2(\cB_-)\cap H^1_0(\cB_-),
\end{equation}
as well as the orthogonal sums in $L^2(\dR^n)=L^2(\cB_+)\oplus L^2(\cB_-)$, 
\begin{align}\label{tgg} 
\begin{split} 
& A_D:=\begin{pmatrix} A_+ & 0 \\ 0 & A_-\end{pmatrix}\, \text{ and } \, B_D:=\begin{pmatrix} B_+ & 0 \\ 0 & A_-\end{pmatrix},   \\
 &  \dom(A_D)=\dom(B_D)=\bigl(H^2(\cB_+)\cap H^1_0(\cB_+)\bigr)\times\bigl(H^2(\cB_-)\cap H^1_0(\cB_-)\bigr).
\end{split} 
\end{align} 
For any orthonormal basis $(\varphi_j)_{j \in J}$ in $L^2(\cS)$
we shall first prove the representation 
\begin{equation}\label{xa}
   \xi_A(\lambda)=\sum_{j \in J} \lim_{\varepsilon\downarrow 0} \pi^{-1} 
   \bigl(\Im\bigl(\log (\sN(\lambda + i\varepsilon))\bigr)\varphi_j,\varphi_j\bigr)_{L^2(\cS)}
  \end{equation}
for a spectral shift function $\xi_A$ of the pair $\{A,A_D\}$ and the representation 
\begin{equation}\label{xb}
   \xi_B(\lambda)=\sum_{j \in J} \lim_{\varepsilon\downarrow 0} \pi^{-1} 
   \bigl(\Im\bigl(\log (\sN_V(\lambda + i\varepsilon))\bigr)\varphi_j,\varphi_j\bigr)_{L^2(\cS)}
  \end{equation}
for
a spectral shift function $\xi_B$ of the pair $\{B,B_D\}$. 

\vskip 0.2cm\noindent
{\it Step 1.} In this step we consider the operators $B$ and $B_D$ as self-adjoint extensions of the closed symmetric $S=B\cap B_D$, which is given by
\begin{equation}\label{sv}
S = - \Delta+V,   \quad  
\dom(S) = \bigl\{f\in H^2(\dR^n) \, \big| \, \gamma_D^+f_+=0=\gamma_D^-f_-\bigr\}. 
 \end{equation}
Furthermore, consider the operator
\begin{equation*}
T=-\Delta+V,   \quad  
\dom(T) = \left\{f=\begin{pmatrix} f_+\\ f_-\end{pmatrix}\in H^2(\cB_+)\times H^2(\cB_-) 
 \, \bigg| \, \gamma_D^+f_+=\gamma_D^-f_-\right\}, 
\end{equation*}
and set $\gamma_D f:= \gamma_D^+f_+=\gamma_D^-f_-$ for $f\in\dom(T)$.
It is easy to see with the help of Theorem~\ref{ratemal}, \eqref{green11}--\eqref{green12} and \eqref{tracemapelli} that $\{L^2(\cS),\Gamma_0,\Gamma_1\}$, where
\begin{equation}\label{qbtv}
 \Gamma_0 f=\widetilde{\imath^{-1}}(\gamma_N^+f_++\gamma_N^-f_-) \, 
 \text{ and } \, \Gamma_1 f=\imath\gamma_D f,\quad f\in\dom(T),
\end{equation}
is a quasi boundary triple for $T\subset S^*$ such that
\begin{equation*}
 B=T\upharpoonright\ker(\Gamma_0)\, \text{ and } \, B_D=T\upharpoonright\ker(\Gamma_1)
\end{equation*}
hold (cf.\ the proof of \cite[Theorem~5.1]{BMN15}). The corresponding Weyl function is 
\begin{equation}\label{mmv}
 M(z)\varphi =\imath\bigl(\cD_+^V(z)+ \cD_-(z)\bigr)^{-1}\widetilde\imath\varphi 
 = \sN_V(z)\varphi
\end{equation}
for all $z \in \rho(B)\cap\rho(B_D)$ and $\varphi\in\ran(\Gamma_0)$. Furthermore, the proof of \cite[Theorem~5.1]{BMN15} shows that
$M(z)$ and $M(z)^{-1}$ are bounded for all  $z \in \rho(B)\cap\rho(B_D)$ and one has $\overline{M(z)}=\sN_V(z)$.

One observes that $B$ corresponds to the densely defined, closed quadratic form 
$$\mathfrak b[f,g]=(\nabla f,\nabla g)_{(L^2(\dR^n))^n}+(Vf,g)_{L^2(\dR^n)},\quad \dom(\mathfrak b)=H^1(\dR^n),$$
and that $B_D$ 
corresponds to the densely defined closed quadratic form 
$$\mathfrak b_D[f,g]=(\nabla f,\nabla g)_{(L^2(\dR^n))^n}+(Vf,g)_{L^2(\dR^n)},\quad \dom(\mathfrak b_D)=H^1_0(\cB_+)\times H^1_0(\cB_-).$$
Since $H^1(\dR^n)\subset (H^1_0(\cB_+)\times H^1_0(\cB_-))$ this implies $\mathfrak b\leq\mathfrak b_D$ and yields the sign condition 
$(B - \zeta I_{L^2(\bbR^n)})^{-1}\geq (B_D - \zeta I_{L^2(\bbR^n)})^{-1}$
in the assumptions of Theorem~\ref{mainssf2} for all $\zeta < \min(\sigma(B))\leq \min(\sigma(B_D))$; see the beginning of Step 2 in the proof of Theorem~\ref{nrthmelli}. 

Next, we verify the $\sS_p$-conditions 
\begin{equation}\label{ddcq1}
\, \overline{\gamma(z)}^{(p)}\bigl( M(z)^{-1} \gamma({\ol z})^*\bigr)^{(q)}\in\sS_1\bigl(L^2(\dR^n)\bigr),\quad p+q=2k,
\end{equation}
\begin{equation}\label{ddcq2}
 \bigl( M(z)^{-1} \gamma({\ol z})^*\bigr)^{(q)}\overline{\gamma(z)}^{(p)}\in\sS_1\bigl(L^2(\cS)\bigr),\quad p+q=2k,
\end{equation}
and 
 \begin{equation}\label{ddcq3}
   \frac{d^j}{dz^j} \overline{M (z)}\in\sS_{(2k+1)/j}\bigl(L^2(\cS)\bigr),\quad j=1,\dots,2k+1,
 \end{equation}
for all $z \in \rho(B)\cap\rho(B_D)$ in the assumptions of Theorem~\ref{mainssf2}. 
For this we use the smoothing property
\begin{equation}\label{smsm}
 (B - z I_{L^2(\bbR^n)})^{-1}f \in H^{\ell+2}_\cO(\dR^n)\, \text{ for all } \, f\in H^{\ell}_\cO(\dR^n)  
 \, \text{ and $\ell \in \bbN_0$,}
\end{equation}
where $\cO$ is an open neighborhood of the sphere $\cS$ in $\dR^n$ such that 
$\text{\rm supp}\, (V) \cap\cO=\emptyset$ and 
$H^{\ell}_\cO(\dR^n) = \big\{f\in L^2(\dR^n) \, \big| \, f\upharpoonright_\cO\in H^{\ell}(\cO)\big\}$ 
(cf.\ \cite[Lemma 4.1\,$(i)$]{BLL13-IEOT}). 

It follows from \eqref{gstar} and the definition of $\Gamma_1$ that
\begin{equation}\label{yes2}
 \gamma({\ol z})^* f=\Gamma_1(B - z I_{L^2(\bbR^n)})^{-1}f 
 = \imath \, \gamma_D (B - z I_{L^2(\bbR^n)})^{-1}f
\end{equation}
and hence \eqref{gammad2} yields
\begin{equation}\label{schubidu}
 \bigl(\gamma({\ol z})^*\bigr)^{(q)} 
 = q! \, \gamma({\ol z})^*(B - z I_{L^2(\bbR^n)})^{-q} 
 = q! \, \imath \, \gamma_D(B - z I_{L^2(\bbR^n)})^{-(q+1)}.
\end{equation}
Since
\begin{equation*}
 \ran \bigl(\gamma_D(B - z I_{L^2(\bbR^n)})^{-(q+1)}\bigr)\subset H^{2q+3/2}(\cS)
 \end{equation*}
by \eqref{smsm} and \eqref{gamss}, and the operator $\gamma_D(B - z I_{L^2(\bbR^n)})^{-(q+1)}$ is bounded from $L^2(\dR^n)$ into $H^{1/2}(\cS)$ it follows from 
Lemma~\ref{usel} with $s=2q+(3/2)$ and $t=1/2$ that
\begin{equation*}
 \gamma_D(B - z I_{L^2(\bbR^n)})^{-(q+1)}\in\sS_r\bigl(L^2(\dR^n),H^{1/2}(\cS)\bigr).
 \quad r> (n-1)/(2q+1),
\end{equation*}
As $\imath:H^{1/2}(\cS)\rightarrow L^2(\cS)$ is an isomorphism one concludes from \eqref{schubidu} that 
\begin{equation} \label{11}
\bigl(\gamma({\ol z})^*\bigr)^{(q)} \in\sS_r\bigl(L^2(\dR^n),L^2(\cS)\bigr), 
\quad r> (n-1)/(2q+1),
\end{equation}
for all $z \in \rho(B)$ and $q \in \bbN_0$.
From this it is also clear that  
\begin{equation}\label{22}
 \overline{\gamma(z)}^{(p)}\in\sS_r\bigl(L^2(\cS),L^2(\dR^n)\bigr), 
 \quad r> (n-1)/(2p+1),
\end{equation}
for all $z \in \rho(B)$ and $p \in \bbN_0$. Furthermore, 
\begin{equation}\label{mqqq22}
 \frac{d^j}{dz^j} \overline{M(z)}=j! \, \gamma({\ol z})^*(B - z I_{L^2(\bbR^n)})^{-(j-1)}\overline{\gamma(z)}, \quad j \in \bbN, 
\end{equation}
by \eqref{gammad3}, and using \eqref{yes2} one obtains with the arguments above that
\begin{align*}
\gamma({\ol z})^*(B - z I_{L^2(\bbR^n)})^{-(j-1)} 
= \imath \gamma_D (B - z I_{L^2(\bbR^n)})^{-j} 
          \in \sS_r\bigl(L^2(\dR^n),L^2(\cS)\bigr),&  \\
    r>  (n-1)/(2j-1),& 
 \end{align*}
 for all $z \in \rho(B)$ and $j \in \bbN$. Together with \eqref{22} for $p=0$ one finds that \eqref{mqqq22} satisfies
\begin{equation}\label{33}
 \frac{d^j}{dz^j} \overline{M (z)} \in \sS_r\bigl(L^2(\cS)\bigr), 
 \quad r>  (n-1)/(2j),
\end{equation}
for all $z \in \rho(B)$ and $j \in \bbN$. The same arguments as in Step 3 of the proof Theorem~\ref{nrthmelli} show that 
\begin{equation}\label{44}
 \frac{d^j}{dz^j} \overline{M (z)}^{-1}\in\sS_r\bigl(L^2(\cS)\bigr), 
 \quad r> (n-1)/(2j),
\end{equation}
for all $z \in \rho(B)\cap\rho(B_D)$ and $j \in \bbN$. It follows from \eqref{11} and \eqref{44} that each summand in the right-hand side in
\begin{equation*}
\bigl(  M(z)^{-1} \gamma({\ol z})^* \bigr)^{(q)}= \sum_{\substack{p+m=q \\[0.2ex] p,m\geq 0}} \begin{pmatrix} q \\ p \end{pmatrix} 
 \bigl(\overline{M(z)}^{-1}\bigr)^{(p)} \bigl(\gamma({\ol z})^*\bigr)^{(m)} 
 \end{equation*}
belongs to  $\sS_r\big(L^2(\cS),L^2(\dR^n)\big)$ for $r> (n-1)/(2q+1)$, and hence one infers together with \eqref{22} that
\begin{align*}
& \overline{\gamma(z)}^{(p)}\bigl( M(z)^{-1} \gamma({\ol z})^*\bigr)^{(q)} 
\in\sS_r\bigl(L^2(\bbR^n)\bigr), \\
& r> (n-1)/[2(p+q)+2]= (n-1)/(4k+2), 
\end{align*}
and since $k> (n-2)/4$ by assumption, one has $1> (n-1)/(4k+2)$, implying the trace class condition~\eqref{ddcq1}. The same argument shows that \eqref{ddcq2} is satisfied.
Finally, \eqref{ddcq3} follows from \eqref{33} and the fact that $k> (n-2)/4$ implies
\begin{equation*}
 \frac{2k+1}{j}>\frac{n}{2j}>\frac{n-1}{2j}, \quad j=1,\dots,2k+1.
\end{equation*}

Hence, the assumptions in Theorem~\ref{mainssf2} are satisfied with $S$ in \eqref{sv}, the quasi boundary triple in \eqref{qbtv}, and the corresponding Weyl function in \eqref{mmv}. Thus, 
Theorem~\ref{mainssf2} yields
assertion $(i)$ with $A$ replaced by $B_D$ and for any orthonormal basis 
$(\varphi_j)_{j \in J}$ in $L^2(\cS)$ the function 
  \begin{equation*}
   \xi_B(\lambda) 
   =\sum_{j \in J} \lim_{\varepsilon\downarrow 0}\pi^{-1} \bigl(\Im\bigl( \log (\sN_V(\lambda+i\varepsilon)) \bigr)
   \varphi_j,\varphi_j\bigr)_{L^2(\cS)}  \, \text{ for a.e.~$\lambda \in \dR$}
  \end{equation*}
  in \eqref{xb}
is a spectral shift function for the pair $\{B,B_D\}$ and the trace formula
\begin{equation}\label{trb}
 \tr_{L^2(\bbR^n)}\bigl( (B_D - z I_{L^2(\bbR^n)})^{-(2k+1)}-(B - z I_{L^2(\bbR^n)})^{-(2k+1)}\bigr)
 = - (2k+1) \int_\dR \frac{\xi_B(\lambda)\, d\lambda}{(\lambda - z)^{2k+2}}
\end{equation}
is valid for all $z \in \rho(B)\cap\rho(B_D)$.

\vskip 0.2cm\noindent
{\it Step 2.} Now we complete the proof of Theorem~\ref{thmvv}. First, we note that the same arguments as in Step 1 with $V=0$ show that 
assertion $(i)$ in Theorem~\ref{thmvv} holds with $B$ replaced by $A_D$ and $\xi_A$ in \eqref{xa} is a spectral shift function for the pair $\{A,A_D\}$ 
such that 
\begin{equation}\label{tra}
 \tr_{L^2(\bbR^n)}\bigl( (A_D - z I_{L^2(\bbR^n)})^{-(2k+1)}-(A - z I_{L^2(\bbR^n)})^{-(2k+1)}\bigr) 
 = - (2k+1) \int_\dR \frac{\xi_A(\lambda)\, d\lambda}{(\lambda - z)^{2k+2}}
\end{equation}
holds for all $z \in \dC\backslash [0,\infty)$. The assumption $k> (n-2)/4$ 
implies $2k+1> n/2$ and hence 
\begin{equation}\label{apbpres1}
 (A_+ - z I_{L^2(\cB_+)})^{-(2k+1)}\in\sS_1\bigl(L^2(\cB_+)\bigr),\qquad z\in\rho(A_+),
\end{equation} 
and
\begin{equation}\label{apbpres2}
 (B_+ - \zeta I_{L^2(\cB_+)})^{-(2k+1)}\in\sS_1\bigl(L^2(\cB_+)\bigr),\qquad \zeta\in\rho(B_+),
\end{equation}
by standard Weyl asymptotics.
Furthermore, since the spectra of $A_+$ and $B_+$ are discrete and semibounded from below,  
it is well-known that 
\begin{equation}\label{xp}
 \xi_+ (\lambda) = N(\lambda,B_+) - N(\lambda,A_+), \quad \lambda \in \dR,
\end{equation}
is a spectral shift function for the pair $\{A_+,B_+\}$ (see, e.g., \cite[(3.28)]{BY92}). From \eqref{tgg} it is clear that $\xi_+$ is also a spectral shift
function for the pair $\{A_D,B_D\}$. Since
\begin{equation*} 
\big[(B_D - z I_{L^2(\bbR^n)})^{-(2k+1)}-(A_D - z I_{L^2(\bbR^n)})^{-(2k+1)}\big] \in \sS_1\bigl(L^2(\dR^n)\bigr)
\end{equation*}
by \eqref{apbpres1}--\eqref{apbpres2} and \eqref{tgg} one concludes that  
\begin{align}\label{trp}
\begin{split} 
& \tr_{L^2(\bbR^n)}\bigl( (B_D - z I_{L^2(\bbR^n)})^{-(2k+1)}-(A_D - z I_{L^2(\bbR^n)})^{-(2k+1)}\bigr)    \\
 & \quad = - (2k+1) \int_\dR \frac{\xi_+(t)\, d\lambda}{(\lambda - z)^{2k+2}}, 
 \quad z \in \rho(A_D)\cap\rho(B_D). 
 \end{split}
\end{align}
Hence, 
\begin{equation*}
 \xi(\lambda)=\xi_A(\lambda)-\xi_B(\lambda)+\xi_+(t)\, \text{ for a.e.~$\lambda \in \dR$}
\end{equation*}
is a spectral shift function for the pair $\{A,B\}$, and taking into account the specific form of $\xi_A$, $\xi_B$, and $\xi_+$, in \eqref{xa}, \eqref{xb}, and \eqref{xp}
and the trace formulas \eqref{trb}, \eqref{tra}, and \eqref{trp}, the assertions in Theorem~\ref{thmvv} follow.
\end{proof}

\begin{remark}
We note that the spectral shift function $\xi$ in Theorem~\ref{thmvv} is continuous for $\lambda > 0$ since $V\in L^\infty(\dR^n)$ 
is compactly supported (see, e.g., \cite[Theorem~9.1.20]{Y10}). 
On the other hand the spectral shift function $\xi_+$ of $\{A_+,B_+\}$ is a step function and hence the difference of the spectral shift functions
$\xi_A$ and $\xi_B$ of the pairs $\{A,A_D\}$ and $\{B,B_D\}$ cancel the discontinuities of $\xi_+$ 
for $\lambda > 0$.
\end{remark}

\section{Schr\"{o}dinger operators with $\delta$-potentials supported on hypersurfaces}\label{ap2sec}

The aim of this section is to determine a spectral shift function for the pair $\{H,H_{\delta,\alpha}\}$, where $H=-\Delta$ is
the usual self-adjoint Laplacian in $L^2(\bbR^n)$, and $H_{\delta,\alpha}=-\Delta-\alpha \delta_\cC$ is a self-adjoint Schr\"{o}dinger operator with $\delta$-potential 
of strength $\alpha$ supported on a  compact hypersurface $\cC$ in $\dR^n$ which splits $\dR^n$ in a bounded interior domain and an unbounded exterior domain. 
Throughout this section we shall assume that the following hypothesis holds.

\begin{hypothesis}\label{hypo6}
Let $n \in \bbN$, $n \geq 2$, and $\Omega_{\rm i}$ be a nonempty, open, bounded interior domain 
in $\dR^n$ with a smooth boundary $\partial\Omega_{\rm i}$ and let 
$\Omega_{\rm e}=\dR^n\backslash\overline{\Omega_{\rm i}}$ be the corresponding 
exterior domain. The common boundary of the interior domain $\Omega_{\rm i}$ and exterior domain $\Omega_{\rm e}$ will be denoted by $\cC=\partial\Omega_{\rm e}=\partial\Omega_{\rm i}$. Furthermore, 
let $\alpha\in C^1(\cC)$ be a real-valued function on the boundary $\cC$.
\end{hypothesis}

We consider the self-adjoint operators 
\begin{equation}\label{h}
 H f=-\Delta f,\quad \dom (H)=H^2(\dR^n),
\end{equation}
and 
\begin{equation}\label{h-delta}
\begin{split}
& H_{\delta,\alpha} f = - \Delta f,\\
& \dom (H_{\delta,\alpha})=\left\{f=\begin{pmatrix}f_{\rm i}\\[1mm] f_{\rm e}\end{pmatrix}\in H^2(\Omega_{\rm i})\times H^2(\Omega_{\rm e}) \, \bigg| \, 
 \begin{matrix}\gamma_D^{\rm i}f_{\rm i}=\gamma_D^{\rm e}f_{\rm e},\quad\quad\!\\[1mm]
\alpha\gamma_D^{\rm i} f_{\rm i}=\gamma_N^{\rm i} f_{\rm i}+\gamma_N^{\rm e}f_{\rm e}\end{matrix}\right\},
\end{split}
 \end{equation}
in  $L^2(\bbR^n)$.
Here $f_{\rm i}$ and $f_{\rm e}$ denote the restrictions of a function $f$ on $\dR^n$ onto $\Omega_{\rm i}$ and $\Omega_{\rm e}$, and 
$\gamma_D^{\rm i}$, $\gamma_D^{\rm e}$ and $\gamma_N^{\rm i}$, $\gamma_N^{\rm e}$ are the Dirichlet and Neumann trace operators on $H^2(\Omega_{\rm i})$ and 
$H^2(\Omega_{\rm e})$, respectively.  We note that 
$H_{\delta,\alpha}$ in \eqref{h-delta} coincides with the self-adjoint operator associated
to the quadratic form
\begin{equation*}
 \mathfrak h_{\delta,\alpha}[f,g]=(\nabla f,\nabla g)-\int_\Sigma \alpha(x) f(x) \overline{g(x)}\,d\sigma(x),\quad f,g\in H^1(\dR^n), 
\end{equation*}
see \cite[Proposition 3.7]{BLL13-AHP} and \cite{BEKS94} for more details. For $c\in\dR$ we shall also make use of the self-adjoint operator
\begin{equation}\label{h-deltac}
\begin{split}
& H_{\delta,c} f = - \Delta f,\\
& \dom (H_{\delta,c})  = \left\{f=\begin{pmatrix}f_{\rm i}\\[1mm] f_{\rm e}\end{pmatrix}\in H^2(\Omega_{\rm i})\times H^2(\Omega_{\rm e}) \, \bigg| \, 
 \begin{matrix}\gamma_D^{\rm i}f_{\rm i}=\gamma_D^{\rm e}f_{\rm e},\quad\quad\!\\[1mm]
c\gamma_D^{\rm i} f_{\rm i}=\gamma_N^{\rm i} f_{\rm i}+\gamma_N^{\rm e}f_{\rm e}\end{matrix}\right\}. 
\end{split}
\end{equation}

We define interior and exterior
Dirichlet-to-Neumann maps $\cD_{\rm i}(z)$ and $\cD_{\rm e}(\zeta)$ as operators in $L^2(\cC)$ for all $z,\zeta \in \dC\backslash [0,\infty)$
in a similar way as in Section~\ref{ap11sec}. One notes 
that for $\varphi,\psi\in H^1(\cC)$ and $z,\zeta \in \dC\backslash [0,\infty)$, the boundary value problems 
\begin{equation}\label{i-bvp}
 -\Delta f_{{\rm i},z} = z f_{{\rm i},z},\quad \gamma_D^{\rm i} f_{{\rm i},z}=\varphi,
\end{equation}
and 
\begin{equation}\label{e-bvp}
 -\Delta f_{{\rm e},\zeta} = \zeta f_{{\rm e},\zeta},\quad \gamma_D^{\rm e} f_{{\rm e},\zeta} = \psi,
\end{equation}
admit unique solutions $f_{{\rm i},z}\in H^{3/2}(\Omega_{\rm i})$ and $f_{{\rm e},\zeta}\in H^{3/2}(\Omega_{\rm e})$, respectively. 
The corresponding solution operators are denoted by
\begin{equation*}
 P_{\rm i}(z):L^2(\cC) \rightarrow L^2(\Omega_{\rm i}),\quad \varphi\mapsto f_{{\rm i},z},
\end{equation*}
and
\begin{equation*}
 P_{\rm e}(\zeta):L^2(\cC) \rightarrow L^2(\Omega_{\rm e}),\quad \psi\mapsto f_{{\rm e},\zeta}.
\end{equation*}
The {\it interior Dirichlet-to-Neumann map}
\begin{equation}\label{di}
 \cD_{\rm i}(z):L^2(\cC)\rightarrow L^2(\cC),\quad \varphi\mapsto \gamma_N^{\rm i} P_{\rm i}(z)\varphi,
\end{equation}
is defined on $\dom(\cD_{\rm i}(z))=H^1(\cC)$ and
maps Dirichlet boundary values $\gamma_D^{\rm i} f_{{\rm i},z}$ of the solutions 
$f_{{\rm i},z}\in H^{3/2}(\Omega_{\rm i})$ of \eqref{i-bvp} onto the corresponding Neumann
boundary values $\gamma_N^{\rm i} f_{{\rm i},z}$, and the {\it exterior Dirichlet-to-Neumann map}
\begin{equation}\label{de}
 \cD_{\rm e}(\zeta):L^2(\cC)\rightarrow L^2(\cC),\quad 
 \psi\mapsto \gamma_N^{\rm e} P_{\rm e}(\zeta)\psi,
\end{equation}
is defined on $\dom(\cD_{\rm e}(\zeta))=H^1(\cC)$ and
maps Dirichlet boundary values $\gamma_D^{\rm e} f_{{\rm e},\zeta}$ of the solutions 
$f_{{\rm e},\zeta}\in H^{3/2}(\Omega_{\rm e})$ of \eqref{e-bvp} onto the corresponding Neumann
boundary values $\gamma_N^{\rm e} f_{{\rm e},\zeta}$. The interior and exterior Dirichlet-to-Neumann maps are both closed unbounded operators in $L^2(\cC)$.
We note that $\cD_{\rm i}(z)$ and $\cD_{\rm e}(\zeta)$ coincide with the restrictions of the Dirichlet-to-Neumann maps $\cD_+(z)$ and $\cD_-(\zeta)$
in Section~\ref{ap11sec} onto $H^1(\cC)$ in the case $\Omega_{\rm i}=\cB_+$, $\Omega_{\rm e}=\cB_-$, and $\cC=\cS$.

In the next theorem a spectral shift function for the pair $\{H,H_{\delta,\alpha}\}$ is expressed in terms of the limits of the sum of the 
interior and exterior Dirichlet-to-Neumann map $\cD_{\rm i}(z)$ and $\cD_{\rm e}(z)$ and the function $\alpha$. It will turn out that the
operators $\cD_{\rm i}(z) +\cD_{\rm e}(z)$ are boundedly invertible for all $z\in\dC\backslash [0,\infty)$ and for our purposes
it is convenient to work with the function 
\begin{equation}\label{ee}
 z \mapsto\cE(z)=\bigl(\cD_{\rm i}(z) +\cD_{\rm e}(z)\bigr)^{-1},\quad z\in\dC\backslash[0,\infty),
\end{equation}
which coincides with the acoustic single-layer potential for the Helmholtz equation, that is,
\begin{equation*}
 (\cE(z)\varphi)(x)=\int_\cC G(z,x,y)\varphi(y)d\sigma(y),\quad x\in\cC,\,\,\varphi\in C^\infty(\cC),
\end{equation*}
where $G(z,\,\cdot\,,\,\cdot\,)$, $z\in\dC\backslash\dR$, represents the integral kernel of the resolvent of $H$ (cf.\ \cite[Chapter~6]{McL00} and \cite[Remark 3.3]{BLL13-AHP}). 
The function $\cE$ in \eqref{ee} plays a similar role as the closure of the Neumann-to-Dirichlet map $\cN$ in Theorem~\ref{nrthmelli}. 
We mention that the trace class property of the difference of the
$2k+1$th powers of the resolvents in the next theorem is known from \cite{BLL13-AHP} (see also \cite{BLL-Exner}).

\begin{theorem}\label{dddthm}
Assume Hypothesis~\ref{hypo6}, let $H$ and $H_{\delta,\alpha}$ be the self-adjoint operators in
\eqref{h} and \eqref{h-delta}, respectively, 
let $\cE(z)$ be defined as in \eqref{ee}, let $\alpha\in C^1(\cC)$ be a real-valued function, fix 
$c>0$ such that $\alpha(x)<c$ for all $x\in\cC$, and let $H_{\delta,c}$ be the self-adjoint operator in \eqref{h-deltac}. 
Then the following assertions $(i)$ and $(ii)$ hold for $k \in \bbN_0$ such that $k\geq (n-3)/4$:
 \begin{itemize}
  \item [$(i)$] The difference of the $2k+1$th-powers of the resolvents of $H$ and $H_{\delta,\alpha}$ is 
a trace class operator, that is,
\begin{equation*}
\big[(H_{\delta,\alpha} - z I_{L^2(\bbR^n)})^{-(2k+1)} 
- (H - z I_{L^2(\bbR^n)})^{-(2k+1)}\big] \in \sS_1\bigl(L^2(\dR^n)\bigr)
\end{equation*}
holds for all $z\in\rho(H_{\delta,\alpha})=\rho(H)\cap\rho(H_{\delta,\alpha})$. 
 \item [$(ii)$] For any orthonormal basis $(\varphi_j)_{j \in J}$ in $L^2(\cC)$ the function 
  \begin{align*}
   \xi(\lambda) 
   =\sum_{j \in J} \lim_{\varepsilon\downarrow 0}\pi^{-1} \Bigl(\bigl(\Im\bigl( \log (\cM_\alpha(\lambda+i\varepsilon)) - \log (\cM_0(\lambda + i \varepsilon))\bigr)\bigr)
   \varphi_j,\varphi_j\Bigr)_{L^2(\cC)} \\
   \text{for a.e. $\lambda\in\dR$,}
  \end{align*}
is a spectral shift function for the pair $\{H,H_{\delta,\alpha}\}$ such that $\xi(\lambda)=0$ for 
$\lambda < \min(\sigma(H_{\delta,c}))$ and the trace formula
\begin{align*}
& \tr_{L^2(\bbR^n)}\bigl( (H_{\delta,\alpha} - z I_{L^2(\bbR^n)})^{-(2k+1)} 
 - (H - z I_{L^2(\bbR^n)})^{-(2k+1)}\bigr)   \\ 
& \quad = - (2k+1) \int_\dR \frac{\xi(\lambda)\, d\lambda}{(\lambda - z)^{2k+2}} 
 \end{align*}
is valid for all $z \in \rho(H_{\delta,\alpha})=\rho(H)\cap\rho(H_{\delta,\alpha})$.
\end{itemize}
\end{theorem}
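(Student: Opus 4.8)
The plan is to follow the pattern of the proofs of Theorem~\ref{nrthmelli} and Theorem~\ref{thmvv}; the key point is that $H=H_{\delta,0}$, so that $\cM_0$ is the closure of the Weyl function attached to the pair $\{H_{\delta,c},H\}$ while $\cM_\alpha$ is the one attached to $\{H_{\delta,c},H_{\delta,\alpha}\}$, with $H_{\delta,c}$ from \eqref{h-deltac} playing the role of the comparison operator $C$ from the end of Section~\ref{ssfsec}. First I would construct a quasi boundary triple for $\{H_{\delta,c},H_{\delta,\alpha}\}$: let $S=H_{\delta,c}\cap H_{\delta,\alpha}$, which, since $c\neq\alpha$, acts as $-\Delta$ on $f=f_{\rm i}\oplus f_{\rm e}\in H^2(\Omega_{\rm i})\times H^2(\Omega_{\rm e})$ with $\gamma_D^{\rm i}f_{\rm i}=\gamma_D^{\rm e}f_{\rm e}=0$ and $\gamma_N^{\rm i}f_{\rm i}+\gamma_N^{\rm e}f_{\rm e}=0$, and let $T$ be the restriction of $-\Delta$ to $\bigl\{f\in H^2(\Omega_{\rm i})\times H^2(\Omega_{\rm e})\bigm|\gamma_D^{\rm i}f_{\rm i}=\gamma_D^{\rm e}f_{\rm e}\bigr\}$. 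Writing $\gamma_D f:=\gamma_D^{\rm i}f_{\rm i}=\gamma_D^{\rm e}f_{\rm e}$ and $\gamma_N f:=\gamma_N^{\rm i}f_{\rm i}+\gamma_N^{\rm e}f_{\rm e}$, I would set, exactly as for the Robin triple of Section~\ref{ap1sec},
\[
\Gamma_0 f=c\,\gamma_D f-\gamma_N f,\qquad \Gamma_1 f=(c-\alpha)^{-1}\bigl(\alpha\,\gamma_D f-\gamma_N f\bigr),\qquad f\in\dom(T),
\]
so that $H_{\delta,c}=T\upharpoonright\ker(\Gamma_0)$ and $H_{\delta,\alpha}=T\upharpoonright\ker(\Gamma_1)$; that $\{L^2(\cC),\Gamma_0,\Gamma_1\}$ is a quasi boundary triple for $\overline T=S^*$ then follows from Theorem~\ref{ratemal} using Green's identities on $\Omega_{\rm i}$ and $\Omega_{\rm e}$ together with surjectivity of the combined trace onto $H^{3/2}(\cC)\times H^{1/2}(\cC)$, just as in Step~1 of the proof of Theorem~\ref{nrthmelli}. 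For $f_z\in\ker(T-zI)$ with $\varphi:=\gamma_D f_z$ one has, by \eqref{di}, \eqref{de} and \eqref{ee}, $\gamma_N f_z=(\cD_{\rm i}(z)+\cD_{\rm e}(z))\varphi=\cE(z)^{-1}\varphi$, and hence the corresponding Weyl function is $M(z)=(c-\alpha)^{-1}(\alpha\,\cE(z)-I_{L^2(\cC)})(c\,\cE(z)-I_{L^2(\cC)})^{-1}=\cM_\alpha(z)$; the same construction with $\alpha$ replaced by $0$ realizes $\{H_{\delta,c},H\}$ with Weyl function $\cM_0(z)=-c^{-1}(c\,\cE(z)-I_{L^2(\cC)})^{-1}$.

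Next I would verify the hypotheses of Theorem~\ref{mainssf2} for both pairs. Since $c>0$ and $\alpha(x)<c$ on $\cC$, the associated forms satisfy $\mathfrak h_{\delta,c}\leq\mathfrak h_{\delta,\alpha}$ and $\mathfrak h_{\delta,c}\leq\mathfrak h_{H}$ on $H^1(\dR^n)$, hence $(H_{\delta,c}-\zeta I)^{-1}\geq(H_{\delta,\alpha}-\zeta I)^{-1}$ and $(H_{\delta,c}-\zeta I)^{-1}\geq(H-\zeta I)^{-1}$ for $\zeta<\min(\sigma(H_{\delta,c}))$, which is precisely the comparability condition \eqref{sisi}; in particular the sign condition of Theorem~\ref{mainssf2} holds for each pair. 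Boundedness of $M(z_1)$ and $M(z_2)^{-1}$ reduces to bounded invertibility of $\cD_{\rm i}(z)+\cD_{\rm e}(z)$ on $L^2(\cC)$ for $z\in\dC\backslash[0,\infty)$ (so that $\cE(z)$ in \eqref{ee} is well defined and bounded) and of $c\,\cE(z)-I_{L^2(\cC)}$ and $\alpha\,\cE(z)-I_{L^2(\cC)}$ for $z\in\dC\backslash\dR$: the operator $\cE(z)$ is compact (it is the single-layer potential, a pseudodifferential operator of order $-1$ on the compact manifold $\cC$), so the latter two operators are Fredholm of index $0$, and their injectivity follows from the unique solvability of the associated transmission problems together with a duality/interpolation argument as in \cite[Lemma~4.4]{BLL13-IEOT}. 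For the $\sS_p$-conditions \eqref{ass1}--\eqref{ass3}, I would note, as in Step~3 of the proof of Theorem~\ref{nrthmelli}, that \eqref{gstar} and the boundary condition of $H_{\delta,c}$ give $\gamma(\overline z)^*f=\Gamma_1(H_{\delta,c}-zI)^{-1}f=-\gamma_D(H_{\delta,c}-zI)^{-1}f$, so that the smoothing property $(H_{\delta,c}-zI)^{-m}f\in H^{2m}(\Omega_{\rm i})\times H^{2m}(\Omega_{\rm e})$ for $f\in L^2(\dR^n)$ (iterated elliptic regularity on each side of the smooth interface $\cC$, cf.\ \cite{BLL13-AHP,BLL-Exner}), \eqref{gamss}, and Lemma~\ref{usel} (with traces on $\cC$) yield $\overline{\gamma(z)}^{(p)}\in\sS_r$ and $(\gamma(\overline z)^*)^{(q)}\in\sS_r$ for $r>(n-1)/(2p+3/2)$, resp.\ $r>(n-1)/(2q+3/2)$, as well as $\frac{d^j}{dz^j}\overline{M(z)}\in\sS_w$ and $\frac{d^j}{dz^j}\overline{M(z)}^{-1}\in\sS_w$ for $w>(n-1)/(2j+1)$, $j\in\bbN$; the product rule then places the operators in \eqref{ass1}--\eqref{ass2} in $\sS_r$ with $r>(n-1)/(4k+3)$, which is $<1$ exactly when $k\geq(n-3)/4$, and \eqref{ass3} follows from $(2k+1)/j\geq(n-1)/(2j)>(n-1)/(2j+1)$. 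I expect this $\sS_p$-bookkeeping, together with the bounded invertibility of $\cD_{\rm i}(z)+\cD_{\rm e}(z)$ and the precise smoothing behaviour of $(H_{\delta,c}-zI)^{-1}$ near $\cC$, to be the main obstacle; once these are in place the remainder is a faithful transcription of the Robin case.

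Finally, Theorem~\ref{mainssf2} applied to $\{H_{\delta,c},H_{\delta,\alpha}\}$ gives \eqref{tracehop} for that pair and the spectral shift function $\xi_\alpha(\lambda)=\sum_{j\in J}\lim_{\varepsilon\downarrow0}\pi^{-1}(\Im(\log(\cM_\alpha(\lambda+i\varepsilon)))\varphi_j,\varphi_j)_{L^2(\cC)}$, vanishing for $\lambda<\min(\sigma(H_{\delta,c}))$; applied to $\{H_{\delta,c},H\}$ it gives $\xi_0$ with $\cM_0$ in place of $\cM_\alpha$. The discussion around \eqref{sisi}--\eqref{ssfabc} at the end of Section~\ref{ssfsec}, with comparison operator $C=H_{\delta,c}$, then shows that $\xi=\xi_\alpha-\xi_0$ is a spectral shift function for $\{H,H_{\delta,\alpha}\}$ with $\xi(\lambda)=0$ for $\lambda<\min(\sigma(H_{\delta,c}))$, that the stated trace formula holds, and, combining the two representations since here $\cG_A=\cG_B=L^2(\cC)$, that $\xi$ has the asserted form with $\Im\bigl(\log(\cM_\alpha(\lambda+i\varepsilon))-\log(\cM_0(\lambda+i\varepsilon))\bigr)$; assertion~$(i)$ then follows by subtracting the two instances of \eqref{tracehop}.
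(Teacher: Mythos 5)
Your proposal is correct and follows essentially the same route as the paper: the same quasi boundary triple with boundary maps $\Gamma_0 f=c\gamma_D f-\gamma_N f$, $\Gamma_1 f=(c-\alpha)^{-1}(\alpha\gamma_D f-\gamma_N f)$, the same identification of the Weyl function with $\cM_\alpha$ (the paper works with the restriction $\cE_{1/2}(z)$ of $\cE(z)$ to $H^{1/2}(\cC)$ since $\ran(\Gamma_0)=H^{1/2}(\cC)$, a minor domain point), the same form inequality for the sign condition, the same $\sS_p$-estimates via $\gamma(\ol z)^*=-\gamma_D^{\rm i}(H_{\delta,c}-zI)^{-1}$, elliptic smoothing and Lemma~\ref{usel}, and the same conclusion through Theorem~\ref{mainssf2} applied to $\{H_{\delta,c},H_{\delta,\alpha}\}$ and $\{H_{\delta,c},H\}$ combined with \eqref{ssfab}--\eqref{ssfabc}. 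Your Fredholm-plus-injectivity argument for the invertibility of $\alpha\cE(z)-I_{L^2(\cC)}$ is the same idea the paper carries out explicitly (a kernel element would produce a non-real eigenvalue of the self-adjoint operator $H_{\delta,\alpha}$).
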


\begin{proof}
The structure and underlying idea of the proof of Theorem~\ref{dddthm} is the same as in the proof of Theorem~\ref{nrthmelli}. In the first two steps
a suitable quasi boundary triple and its Weyl function are constructed; in these steps all details are provided. In the third step it is shown that 
the assumptions in Theorem~\ref{mainssf2} are satisfied. Since some of the main arguments are the same as in Step 3 of the proof of Theorem~\ref{nrthmelli}
not all details are repeated.

{\it Step 1.}
Since $c-\alpha(x)\not=0$ for all $x\in\cC$ by assumption, the closed symmetric operator $S=H_{\delta,c}\cap H_{\delta,\alpha}$ is given by
\begin{equation}\label{ssrr} 
Sf=-\Delta f, \quad \dom (S) = \bigl\{f\in H^2(\dR^n) \, \big| \, 
\gamma_D^{\rm i}f_{\rm i}=\gamma_D^{\rm e}f_{\rm e}=0\bigr\}. 
 \end{equation}
In this step we show that the operator
\begin{equation}\label{ttrr}
T = - \Delta,    \quad  
\dom (T) = \left\{f=\begin{pmatrix}f_{\rm i}\\f_{\rm e}\end{pmatrix}\in H^2(\Omega_{\rm i})\times H^2(\Omega_{\rm e}) \, \bigg| \, \gamma_D^{\rm i}f_{\rm i}=\gamma_D^{\rm e}f_{\rm e}\right\}, 
 \end{equation}
satisfies $\overline T=S^*$ and that $\{L^2(\cC),\Gamma_0,\Gamma_1\}$, where
\begin{equation}\label{qbtw1}
 \Gamma_0 f=c\gamma_D^{\rm i} f_{\rm i}-(\gamma_N^{\rm i}f_{\rm i}+\gamma_N^{\rm e}f_{\rm e}),\quad \dom(\Gamma_0)=\dom(T),
\end{equation}
and
\begin{equation}\label{qbtw2}
 \Gamma_1 f= (c-\alpha)^{-1}\bigl(\alpha\gamma_D^{\rm i} f_{\rm i}-(\gamma_N^{\rm i}f_{\rm i}+\gamma_N^{\rm e}f_{\rm e})\bigr),\quad 
 \dom(\Gamma_1)=\dom(T),
\end{equation} 
is a quasi boundary triple for $T\subset S^*$ such that
\begin{equation}\label{hhd}
 H_{\delta,c}=T\upharpoonright\ker(\Gamma_0)\, \text{ and } \, 
 H_{\delta,\alpha}=T\upharpoonright\ker(\Gamma_1).
\end{equation}

For the proof of this fact we make use of Theorem~\ref{ratemal} and verify next that 
assumptions $(i)$--$(iii)$  in Theorem~\ref{ratemal} are satisfied with the above choice 
of $S$, $T$ and boundary maps $\Gamma_0$ and $\Gamma_1$. For $f,g\in\dom (T)$ one computes 
\begin{equation*}
 \begin{split}
&(\Gamma_1 f,\Gamma_0 g)_{L^2(\cC)}-(\Gamma_0 f,\Gamma_1 g)_{L^2(\cC)}\\
&\quad =\Bigl((c-\alpha)^{-1}\bigl(\alpha\gamma_D^{\rm i} f_{\rm i}-(\gamma_N^{\rm i}f_{\rm i}+\gamma_N^{\rm e}f_{\rm e})\bigr), 
                c\gamma_D^{\rm i} g_{\rm i}-(\gamma_N^{\rm i}g_{\rm i}+\gamma_N^{\rm e}g_{\rm e})\Bigr)_{L^2(\cC)}\\
&\qquad - \Bigl(c\gamma_D^{\rm i} f_{\rm i}-(\gamma_N^{\rm i}f_{\rm i}+\gamma_N^{\rm e}f_{\rm e}),
                     (c-\alpha)^{-1}\bigl(\alpha\gamma_D^{\rm i} g_{\rm i}-(\gamma_N^{\rm i}g_{\rm i}+\gamma_N^{\rm e}g_{\rm e})\bigr)\Bigr)_{L^2(\cC)}\\
&\quad =-\bigl((c-\alpha)^{-1} \alpha\gamma_D^{\rm i} f_{\rm i}, \gamma_N^{\rm i}g_{\rm i}+\gamma_N^{\rm e}g_{\rm e}\bigr)_{L^2(\cC)}
         -\bigl(\gamma_N^{\rm i}f_{\rm i}+\gamma_N^{\rm e}f_{\rm e}, (c-\alpha)^{-1}c\gamma_D^{\rm i} g_{\rm i}\bigr)_{L^2(\cC)}\\
&\qquad +\bigl((c-\alpha)^{-1} c\gamma_D^{\rm i} f_{\rm i}, \gamma_N^{\rm i}g_{\rm i}+\gamma_N^{\rm e}g_{\rm e}\bigr)_{L^2(\cC)}
         +\bigl(\gamma_N^{\rm i}f_{\rm i}+\gamma_N^{\rm e}f_{\rm e}, (c-\alpha)^{-1}\alpha\gamma_D^{\rm i} g_{\rm i}\bigr)_{L^2(\cC)}\\                    
&\quad =\bigl(\gamma_D^{\rm i} f_{\rm i},\gamma_N^{\rm i}g_{\rm i}+\gamma_N^{\rm e}g_{\rm e}\bigr)_{L^2(\cC)} -\bigl(\gamma_N^{\rm i}f_{\rm i}+\gamma_N^{\rm e}f_{\rm e},\gamma_D^{\rm i} g_{\rm i}\bigr)_{L^2(\cC)},
 \end{split}
\end{equation*}
and on the other hand, Green's identity and $\gamma_D^{\rm i}f_{\rm i}=\gamma_D^{\rm e}f_{\rm e}$ and $\gamma_D^{\rm i}g_{\rm i}=\gamma_D^{\rm e}g_{\rm e}$ yield 
\begin{equation*}
 \begin{split}
&(Tf,g)_{L^2(\dR^n)}-(f,Tg)_{L^2(\dR^n)}\\
&\,\,=(-\Delta f_{\rm i},g_{\rm i})_{L^2(\Omega_{\rm i})}-(f_{\rm i},-\Delta g_{\rm i})_{L^2(\Omega_{\rm i})}+(-\Delta f_{\rm e},g_{\rm e})_{L^2(\Omega_{\rm e})}
-(f_{\rm e},-\Delta g_{\rm e})_{L^2(\Omega_{\rm e})}\\
&\,\,=(\gamma_D^{\rm i}f_{\rm i},\gamma_N^{\rm i}g_{\rm i})_{L^2(\cC)}-(\gamma_N^{\rm i}f_{\rm i},\gamma_D^{\rm i}g_{\rm i})_{L^2(\cC)}
 +(\gamma_D^{\rm e}f_{\rm e},\gamma_N^{\rm e}g_{\rm e})_{L^2(\cC)}-(\gamma_N^{\rm e}f_{\rm e},\gamma_D^{\rm e}g_{\rm e})_{L^2(\cC)}\\
 &\,\,=\bigl(\gamma_D^{\rm i} f_{\rm i},\gamma_N^{\rm i}g_{\rm i}+\gamma_N^{\rm e}g_{\rm e}\bigr)_{L^2(\cC)} 
  -\bigl(\gamma_N^{\rm i}f_{\rm i}+\gamma_N^{\rm e}f_{\rm e},\gamma_D^{\rm i} g_{\rm i}\bigr)_{L^2(\cC)},
\end{split}
\end{equation*}
and hence condition $(i)$ in Theorem~\ref{ratemal} holds. In order to show that $\ran (\Gamma_0,\Gamma_1)^\top$ is dense in $L^2(\cC)$ we recall that 
\begin{equation*}
 \begin{pmatrix}
  \gamma_D^{\rm i}\\[1mm] \gamma_N^{\rm i}
 \end{pmatrix}:H^2(\Omega_{\rm i})\rightarrow H^{3/2}(\cC)\times H^{1/2}(\cC)
\end{equation*}
and
\begin{equation*}
 \begin{pmatrix}
  \gamma_D^{\rm e}\\[1mm] \gamma_N^{\rm e}
 \end{pmatrix}:H^2(\Omega_{\rm e})\rightarrow H^{3/2}(\cC)\times H^{1/2}(\cC)
\end{equation*}
are surjective mappings. It follows that also the mapping
\begin{equation}\label{tmap}
 \begin{pmatrix}
  \gamma_D^{\rm i} \\[1mm] \gamma_N^{\rm i}+\gamma_N^{\rm e}
 \end{pmatrix}:\dom(T) \rightarrow H^{3/2}(\cC)\times H^{1/2}(\cC)
\end{equation}
is surjective, and since the $2\times 2$-block operator matrix
\begin{equation*}
 \Theta:=\begin{pmatrix} c I_{L^2(\cC)} & -I_{L^2(\cC)} \\[1mm]  \alpha (c-\alpha)^{-1} I_{L^2(\cC)}
 &  - (c-\alpha)^{-1} I_{L^2(\cC)} \end{pmatrix} 
\end{equation*}
is an isomorphism in $L^2(\cC)\times L^2(\cC)$, it follows that the range of the mapping, 
\begin{equation*}
\begin{pmatrix}\Gamma_0 \\ \Gamma_1\end{pmatrix}= \Theta  \begin{pmatrix}
  \gamma_D^{\rm i} \\[1mm] \gamma_N^{\rm i}+\gamma_N^{\rm e}
 \end{pmatrix}  :\dom(T)\rightarrow  L^2(\cC)\times L^2(\cC), 
\end{equation*}
is dense. Furthermore, as $C_0^\infty(\Omega_{\rm i})\times C_0^\infty(\Omega_{\rm e})$ is contained in $\ker(\Gamma_0)\cap\ker(\Gamma_1)$, it is clear that 
$\ker(\Gamma_0)\cap\ker(\Gamma_1)$ is dense in $L^2(\dR^n)$. Hence one concludes that 
condition $(ii)$ in Theorem~\ref{ratemal} is satisfied. Condition $(iii)$ in Theorem~\ref{ratemal} is satisfied since \eqref{hhd} holds by construction
and $H_{\delta,c}$ in \eqref{h-deltac} is self-adjoint. Thus, Theorem~\ref{ratemal}
implies that the closed symmetric operator 
\begin{equation}\label{tg}
 T\upharpoonright\bigl(\ker(\Gamma_0)\cap\ker(\Gamma_1)\bigr)=H_{\delta,c}\cap H_{\delta,\alpha}=S
\end{equation}
is densely defined, its adjoint coincides with $\overline T$, and 
$\{L^2(\cC),\Gamma_0,\Gamma_1\}$ is a quasi boundary triple for $T\subset S^*$ such that \eqref{hhd} holds.

\vskip 0.2cm\noindent
{\it Step 2.} In this step we prove that for $z \in \rho(H_{\delta,c})\cap\rho(H)$ 
the Weyl function corresponding to the quasi boundary triple 
$\{L^2(\cC),\Gamma_0,\Gamma_1\}$ is given by 
\begin{equation}\label{mddf}
\begin{split}
& M(z)= (c-\alpha)^{-1}\bigl(\alpha\cE_{1/2}(z)- I_{L^2(\cC)}\bigr)\bigl(c \cE_{1/2}(z) 
- I_{L^2(\cC)}\bigr)^{-1},\\
& \dom(M(z))=H^{1/2}(\cC),
\end{split}
\end{equation}
where $\cE_{1/2}(z)$ denotes the restriction of the operator $\cE(z)$ in \eqref{ee} onto $H^{1/2}(\cC)$, and we verify 
that $M(z_1)$ and $M(z_2)^{-1}$ are bounded for some $z_1,z_2\in\dC\backslash\dR$. 

It will first be shown that the operator $\cE(z)$ and its restriction $\cE_{1/2}(z)$ are well-defined 
for all $z \in \rho(H)=\dC\backslash [0,\infty)$. 
For this fix $z\in\dC\backslash [0,\infty)$, and let
\begin{equation}\label{flt}
 f_z =\begin{pmatrix} f_{{\rm i},z} \\ f_{{\rm e},z}\end{pmatrix} \in H^{3/2}(\Omega_{\rm i})\times H^{3/2}(\Omega_{\rm e})
\end{equation}
such that $\gamma_D^{\rm i}f_{{\rm i},z}=\gamma_D^{\rm e}f_{{\rm e},z}$, and
\begin{equation*}
-\Delta f_{{\rm i},z} = z f_{{\rm i},z}\, \text{ and } \, 
-\Delta f_{{\rm e},z} = z f_{{\rm e},z}.  
\end{equation*}
From the definition of $\cD_{\rm i}(z)$ and $\cD_{\rm e}(z)$ in \eqref{di} and \eqref{de} one concludes 
that 
\begin{equation}\label{useit2}
\begin{split}
 \bigl(\cD_{\rm i}(z)+\cD_{\rm e}(z)\bigr)\gamma_D^{\rm i}f_{{\rm i},z}&=\cD_{\rm i}(z)\gamma_D^{\rm i}f_{{\rm i},z}
 +\cD_{\rm e}(z)\gamma_D^{\rm e}f_{{\rm e},z}\\
 &=\gamma_N^{\rm i}f_{{\rm i},z}+\gamma_N^{\rm e}f_{{\rm e},z}.
 \end{split}
 \end{equation}
This also proves that $\cD_{\rm i}(z)+\cD_{\rm e}(z)$ is invertible for $z\in\dC\backslash [0,\infty)$.
In fact, otherwise there would exist a 
function $f_z =(f_{{\rm i},z}, f_{{\rm e},z})^\top\not=0$ as in \eqref{flt} which would satisfy both conditions
\begin{equation}\label{plkoplko}
 \gamma_D^{\rm i}f_{{\rm i},z}=\gamma_D^{\rm e}f_{{\rm e},z} \, 
 \text{ and } \,\gamma_N^{\rm i}f_{{\rm i},z}+\gamma_N^{\rm e}f_{{\rm e},z}=0,
\end{equation}
and hence for all $h\in\dom(H)=H^2(\dR^n)$,  
Green's identity together with the conditions \eqref{plkoplko} would imply 
\begin{equation}\label{fvb}
\begin{split}
& (H h,f_z )_{L^2(\dR^n)}-(h, z f_z )_{L^2(\dR^n)}\\
&\quad =(-\Delta h_{\rm i},f_{{\rm i},z})_{L^2(\Omega_{\rm i})}-(h_{\rm i},-\Delta f_{{\rm i},z})_{L^2(\Omega_{\rm i})}\\
&\qquad
            + (-\Delta h_{\rm e},f_{{\rm e},z})_{L^2(\Omega_{\rm e})}-(h_{\rm e},-\Delta f_{{\rm e},z})_{L^2(\Omega_{\rm e})}\\
&\quad =(\gamma_D^{\rm i}h_{\rm i},\gamma_N^{\rm i}f_{{\rm i},z})_{L^2(\cC)} 
- (\gamma_N^{\rm i}h_{\rm i},\gamma_D^{\rm i}f_{{\rm i},z})_{L^2(\cC)}\\       
&\qquad  +(\gamma_D^{\rm e}h_{\rm e},\gamma_N^{\rm e}f_{{\rm e},z})_{L^2(\cC)} 
- (\gamma_N^{\rm e}h_{\rm e},\gamma_D^{\rm e}f_{{\rm e},z})_{L^2(\cC)}  \\
&\quad =0,
\end{split}
\end{equation}
that is, $f_z \in\dom(H)$ 
and $H f_z = z f_z $; a contradiction since $z \in \rho(H)$. Hence,  
$$\ker\bigl(\cD_{\rm i}(z)+\cD_{\rm e}(z)\bigr)=\{0\},\quad z\in\dC\backslash [0,\infty),$$
and if we denote the restrictions of $\cD_{\rm i}(z)$ and $\cD_{\rm e}(z)$ onto $H^{3/2}(\cC)$ by $\cD_{{\rm i},3/2}(z)$
and $\cD_{{\rm e},3/2}(z)$, respectively, then also $\ker(\cD_{{\rm i},3/2}(z)+\cD_{{\rm e},3/2}(z))=\{0\}$ for $z\in\dC\backslash [0,\infty)$. Thus, we have shown that
$\cE(z)$ and its restriction $\cE_{1/2}(z)$ 
are well-defined for all $z \in \rho(H)=\dC\backslash [0,\infty)$. 

Furthermore, if the function $f_z $ in \eqref{flt} belongs to 
$H^2(\Omega_{\rm i})\times H^2(\Omega_{\rm e})$, that is, $f_z \in\ker(T - z I_{L^2(\bbR^n)})$, then
$\gamma_D^{\rm i}f_{{\rm i},z}=\gamma_D^{\rm e}f_{{\rm e},z}\in H^{3/2}(\cC)$ and hence besides \eqref{useit2} one also has 
\begin{equation}\label{useit3}
 \bigl(\cD_{{\rm i},3/2}(z)+\cD_{{\rm e},3/2}(z)\bigr)\gamma_D^{\rm i}f_{{\rm i},z}=\gamma_N^{\rm i}f_{{\rm i},z}+\gamma_N^{\rm e}f_{{\rm e},z}\in H^{1/2}(\cC).
 \end{equation} 
One concludes from \eqref{useit3} that
\begin{equation}\label{useit4}
\cE_{1/2}(z)
\bigl(\gamma_N^{\rm i}f_{{\rm i},z}+\gamma_N^{\rm e}f_{{\rm e},z}\bigr)=\gamma_D^{\rm i}f_{{\rm i},z},
\end{equation}
and from \eqref{qbtw1} one then obtains 
\begin{equation}\label{ggb}
\begin{split}
\bigl(c\cE_{1/2}(z)-I_{L^2(\cC)}\bigr)\bigl(\gamma_N^{\rm i}f_{{\rm i},z}+\gamma_N^{\rm e}f_{{\rm e},z}\bigr)
&=c\gamma_D^{\rm i}f_{{\rm i},z}-\bigl(\gamma_N^{\rm i}f_{{\rm i},z} 
+ \gamma_N^{\rm e}f_{{\rm e},z}\bigr)\\
&=\Gamma_0f_z,  
\end{split}
\end{equation}
and
\begin{equation}\label{ggb2}
\bigl(\alpha\cE_{1/2}(z)-I_{L^2(\cC)}\bigr)\bigl(\gamma_N^{\rm i}f_{{\rm i},z}+\gamma_N^{\rm e}f_{{\rm e},z}\bigr)
=\alpha\gamma_D^{\rm i}f_{{\rm i},z}-\bigl(\gamma_N^{\rm i}f_{{\rm i},z}+\gamma_N^{\rm e}f_{{\rm e},z}\bigr).
\end{equation}
For $z \in \rho(H_{\delta,c})\cap\rho(H)$ one verifies $\ker(c\cE_{1/2}(z)-I_{L^2(\cC)})=\{0\}$ with the help of \eqref{ggb}. Then 
\eqref{qbtw1} and \eqref{tmap} yield
\begin{equation*}
\ran\bigl(c\cE_{1/2}(z)-I_{L^2(\cC)}\bigr)=\ran(\Gamma_0)=H^{1/2}(\cC).
\end{equation*}
Thus, it follows from \eqref{ggb}, \eqref{ggb2}, and \eqref{qbtw2} that
\begin{equation*}
 \begin{split}
  & (c-\alpha)^{-1}\bigl(\alpha\cE_{1/2}(z)- I_{L^2(\cC)}\bigr)\bigl(c \cE_{1/2}(z) - I_{L^2(\cC)}\bigr)^{-1}\Gamma_0f_z \\
  &\quad = (c-\alpha)^{-1}\bigl(\alpha\cE_{1/2}(z)- I_{L^2(\cC)}\bigr)\bigl(\gamma_N^{\rm i}f_{{\rm i},z} 
  + \gamma_N^{\rm e}f_{{\rm e},z}\bigr)\\
  &\quad = (c-\alpha)^{-1}\Bigl(\alpha\gamma_D^{\rm i}f_{{\rm i},z} 
  - \bigl(\gamma_N^{\rm i}f_{{\rm i},z}+\gamma_N^{\rm e}f_{{\rm e},z}\bigr)\Bigr)\\
  &\quad =\Gamma_1 f_z 
 \end{split}
\end{equation*}
holds for all $z \in \rho(H_{\delta,c})\cap\rho(H)$. This proves that the Weyl function corresponding to the quasi boundary triple \eqref{qbtw1}--\eqref{qbtw2}
is given by \eqref{mddf}.

Next it will be shown that $M(z)$ and $M(z)^{-1}$ are bounded for $z\in\dC\backslash\dR$. For this it suffices to check that the operators
\begin{equation}\label{ees}
 \alpha\cE_{1/2}(z)- I_{L^2(\cC)} \, \text{ and } \, c\cE_{1/2}(z)- I_{L^2(\cC)}
\end{equation}
are bounded and have bounded inverses. The argument is the same for both operators in \eqref{ees} and hence we discuss $\alpha\cE_{1/2}(z)- I_{L^2(\cC)}$ only. One recalls that
\begin{equation*}
 \cD_{\rm i}(z) +\cD_{\rm e}(z),\quad z\in\dC\backslash \dR,
\end{equation*}
maps onto $L^2(\cC)$, is boundedly invertible, and its inverse $\cE(z)$ in \eqref{ee} is a compact operator in $L^2(\cC)$ with $\ran(\cE(z))=H^1(\cC)$ (cf.\  
\cite[Proposition 3.2\,$(iii)$]{BLL13-AHP}). Hence also the restriction $\cE_{1/2}(z)$ of $\cE(z)$ onto $H^{1/2}(\cC)$ is bounded. It follows that
$\alpha\cE_{1/2}(z)- I_{L^2(\cC)}$ is bounded, and its closure is given by
\begin{equation}\label{ovm}
\overline{\alpha \cE_{1/2}(z)- I_{L^2(\cC)}}=\alpha\cE(z)- I_{L^2(\cC)}\in\cL\bigl(L^2(\cC)\bigr), 
\quad z\in\dC\backslash\dR.
\end{equation}

In order to show that the inverse $(\alpha\cE_{1/2}(z)- I_{L^2(\cC)})^{-1}$ exists and is 
bounded for $z\in\dC\backslash \dR$ we first check that 
\begin{equation}\label{kern}
\ker\bigl(\alpha\cE(z)- I_{L^2(\cC)}\bigr)=\{0\},\quad z\in\dC\backslash\dR.
\end{equation}
In fact, assume that $z\in\dC\backslash\dR$ and $\varphi\in L^2(\cC)$
are such that $\alpha\cE(z)\varphi=\varphi$. 
It follows from  $\dom(\cE(z))=\ran(\cD_{\rm i}(z) +\cD_{\rm e}(z))=L^2(\cC)$ that there exists $\psi\in H^1(\cC)$ such that
\begin{equation}\label{zuzu}
 \varphi=\bigl(\cD_{\rm i}(z) +\cD_{\rm e}(z)\bigr)\psi, 
\end{equation}
and from \eqref{i-bvp}--\eqref{e-bvp} one concludes that there exists a unique 
\begin{equation}\label{flt22}
 f_z =\begin{pmatrix} f_{{\rm i},z} \\ f_{{\rm e},z}\end{pmatrix} \in H^{3/2}(\Omega_{\rm i})\times H^{3/2}(\Omega_{\rm e}) 
\end{equation}
such that
\begin{equation}\label{df1}
\gamma_D^{\rm i}f_{{\rm i},z}=\gamma_D^{\rm e}f_{{\rm e},z}=\psi,
\end{equation}
and
\begin{equation}\label{gls}
-\Delta f_{{\rm i},z} = z f_{{\rm i},z}\, \text{ and } \, 
-\Delta f_{{\rm e},z} = z f_{{\rm e},z}.  
\end{equation}
Since $\varphi=\alpha \cE(z)\varphi=\alpha\psi$ by \eqref{zuzu}, one obtains from \eqref{useit2}, \eqref{df1}, and \eqref{zuzu} that
\begin{equation}\label{bcc}
\begin{split}
 \gamma_N^{\rm i}f_{{\rm i},z}+\gamma_N^{\rm e}f_{{\rm e},z}
 &=\bigl(\cD_{\rm i}(z)+\cD_{\rm e}(z)\bigr)\gamma_D^{\rm i}f_{{\rm i},z}\\
 &=\bigl(\cD_{\rm i}(z)+\cD_{\rm e}(z)\bigr)\psi\\
 &=\varphi\\
 &=\alpha\psi\\
 &=\alpha\gamma_D^{\rm i}f_{{\rm i},z}.
\end{split}
\end{equation}
For $h=(h_{\rm i},h_{\rm e})^\top\in\dom(H_{\delta,\alpha})$ one has  
\begin{equation}\label{df2}
 \gamma_D^{\rm i}h_{\rm i}=\gamma_D^{\rm e}h_{\rm e}\, \text{ and } \, 
 \gamma_N^{\rm i}h_{\rm i}+\gamma_N^{\rm e}h_{\rm e} 
 =\alpha\gamma_D^{\rm i}h_{\rm i}, 
\end{equation}
and in a similar way as in \eqref{fvb}, Green's identity together with \eqref{df1}, \eqref{bcc}, and \eqref{df2} imply
\begin{equation*}
\begin{split}
& (H_{\delta,\alpha}h,f_z )_{L^2(\dR^n)}-(h,z f_z )_{L^2(\dR^n)}\\
&\quad =(-\Delta h_{\rm i},f_{{\rm i},z})_{L^2(\Omega_{\rm i})}-(h_{\rm i},-\Delta f_{{\rm i},z})_{L^2(\Omega_{\rm i})}\\
&\qquad
            + (-\Delta h_{\rm e},f_{{\rm e},z})_{L^2(\Omega_{\rm e})}-(h_{\rm e},-\Delta f_{{\rm e},z})_{L^2(\Omega_{\rm e})}\\
&\quad=(\gamma_D^{\rm i}h_{\rm i},\gamma_N^{\rm i}f_{{\rm i},z})_{L^2(\cC)}-(\gamma_N^{\rm i}h_{\rm i},\gamma_D^{\rm i}f_{{\rm i},z})_{L^2(\cC)}\\       
&\qquad +(\gamma_D^{\rm e}h_{\rm e},\gamma_N^{\rm e}f_{{\rm e},z})_{L^2(\cC)}-(\gamma_N^{\rm e}h_{\rm e},\gamma_D^{\rm e}f_{{\rm e},z})_{L^2(\cC)}  \\
&\quad =\bigl(\gamma_D^{\rm i}h_{\rm i},\gamma_N^{\rm i}f_{{\rm i},z}+\gamma_N^{\rm e}f_{{\rm e},z}\bigr)_{L^2(\cC)}
 -\bigl(\gamma_N^{\rm i}h_{\rm i}+\gamma_N^{\rm e}h_{\rm e},\gamma_D^{\rm i}f_{{\rm i},z}\bigr)_{L^2(\cC)}\\
&\quad =\bigl(\gamma_D^{\rm i}h_{\rm i},\alpha\gamma_D^{\rm i}f_{{\rm i},z}\bigr)_{L^2(\cC)}
 -\bigl(\alpha\gamma_D^{\rm i}h_{\rm i},\gamma_D^{\rm i}f_{{\rm i},z}\bigr)_{L^2(\cC)}\\ 
&\quad=0.
\end{split}
\end{equation*}
As $H_{\delta,\alpha}$ is self-adjoint one concludes that $f_z \in\dom (H_{\delta,\alpha})$ and $f_z \in\ker(H_{\delta,\alpha} - z I_{L^2(\bbR^n)})$. 
Since $z\in\dC\backslash\dR$, 
this yields $f_z =0$ and therefore, $\psi= \gamma_D^{\rm i}f_{{\rm i},z}=0$ and hence $\varphi=0$ by \eqref{zuzu}, implying \eqref{kern}.

Since $\cE(z)$ is a compact operator in $L^2(\cC)$ (see \cite[Proposition 3.2\,$(iii)$]{BLL13-AHP}) also $\alpha\cE(z)$ is compact 
and together with \eqref{kern} one concludes that 
$$(\alpha\cE(z)-I_{L^2(\cC)})^{-1}\in\cL\big(L^2(\cC)\big).$$ 
Hence also the restriction
\begin{equation}
 \bigl(\alpha\cE_{1/2}(z)-I_{L^2(\cC)}\bigr)^{-1}
\end{equation}
is a bounded operator in $L^2(\cC)$. Summing up, we have shown that the operators in \eqref{ees} are bounded and have bounded inverses for all $z\in\dC\backslash\dR$,
and hence the values $M(z)$ of the Weyl function in \eqref{mddf} are bounded and have bounded inverses for all $z\in\dC\backslash\dR$. 

\vskip 0.2cm\noindent
{\it Step 3.} Now we check that the operators $\{H_{\delta,c},H_{\delta,\alpha}\}$ and  Weyl function corresponding to the 
quasi boundary triple $\{L^2(\cC),\Gamma_0,\Gamma_1\}$ in Step 1 satisfy the assumptions of Theorem~\ref{mainssf2} for $n \in \bbN$, $n \geq 2$, and all $k\geq (n-3)/4$.

In fact, the sign condition \eqref{sign333} follows from the assumption $\alpha(x)<c$ and the fact that the closed quadratic forms 
$\mathfrak h_{\delta,\alpha}$ and $\mathfrak h_{\delta,c}$ associated to $H_{\delta,\alpha}$ and $H_{\delta,c}$ satisfy the inequality
$\mathfrak h_{\delta,c}\leq \mathfrak h_{\delta,\alpha}$ (cf.\ the beginning of Step 2 in the proof of Theorem~\ref{nrthmelli}). 
In order to verify the $\sS_p$-conditions 
\begin{align}
& \overline{\gamma(z)}^{(p)}\bigl( M(z)^{-1} \gamma({\ol z})^*\bigr)^{(q)}\in\sS_1\bigl(L^2(\dR^n)\bigr),\quad p+q=2k,  \label{ddc1} \\
& \bigl( M(z)^{-1} \gamma({\ol z})^*\bigr)^{(q)}\overline{\gamma(z)}^{(p)}\in\sS_1\bigl(L^2(\cC)\bigr),\quad p+q=2k,  \label{ddc2}
\end{align}
and 
 \begin{equation}\label{ddc3}
   \frac{d^j}{dz^j} \overline{M (z)}\in\sS_{(2k+1)/j}\bigl(L^2(\cC)\bigr),\quad j=1,\dots,2k+1,
 \end{equation}
for all $z \in \rho(H_{\delta,c})\cap\rho(H_{\delta,\alpha})$ in the assumptions of 
Theorem~\ref{mainssf2}, one first recalls the smoothing property
\begin{equation}\label{smoothi2}
 (H_{\delta,c} - z I_{L^2(\bbR^n)})^{-1} f \in H^{k+2}(\Omega_{\rm i})\times H^{k+2}(\Omega_{\rm e}), 
 \quad f\in H^k(\Omega_{\rm i})\times H^k(\Omega_{\rm e}), \; k \in \bbN_0, 
\end{equation}
of the resolvent of $H_{\delta,c}$, which follows, for instance, from \cite[Theorem~4.20]{McL00}. Next one observes that \eqref{gstar}, \eqref{qbtw2}, 
and the definition of $H_{\delta,c}$ imply in the same way as in \eqref{yes} that
\begin{equation}
\begin{split}
\gamma({\ol z})^*&=\Gamma_1(H_{\delta,c} - z I_{L^2(\bbR^n)})^{-1}f\\
&=(c-\alpha)^{-1}\bigl(\alpha\gamma_D^{\rm i} -(\gamma_N^{\rm i}+\gamma_N^{\rm e})\bigr)(H_{\delta,c} - z I_{L^2(\bbR^n)})^{-1}f\\
&=(c-\alpha)^{-1}\bigl(c\gamma_D^{\rm i} -(\gamma_N^{\rm i}+\gamma_N^{\rm e}) + (\alpha- c)\gamma_D^{\rm i}\bigr)(H_{\delta,c} - z I_{L^2(\bbR^n)})^{-1}f\\
&=-\gamma_D^{\rm i}(H_{\delta,c} - z I_{L^2(\bbR^n)})^{-1}f, 
\end{split}
\end{equation}
and hence \eqref{gammad2}, \eqref{smoothi2}, and Lemma~\ref{usel} yield
\begin{align*}
 \bigl(\gamma({\ol z})^*\bigr)^{(q)} 
 =-q! \, \gamma_D^{\rm i}(H_{\delta,c} - z I_{L^2(\bbR^n)})^{-(q+1)}\in\sS_r\bigl(L^2(\dR^n),L^2(\cC)\bigr),&   \\
r> (n-1)/[2q+(3/2)],&
\end{align*}
for all $z \in \rho(H_{\delta,c})$ and $q \in \bbN_0$ 
(cf.\ \cite[Lemma 3.1]{BLL-Exner} for the case $c=0$). One also has
\begin{equation*}
 \overline{\gamma(z)}^{(p)}\in\sS_r\bigl(L^2(\cC),L^2(\dR^n)\bigr),\quad r> (n-1)/[2p+(3/2)],
\end{equation*}
for all $z \in \rho(H_{\delta,c})$ and $q \in \bbN_0$. In addition, one verifies with the same arguments as in Step 3 of the proof of Theorem~\ref{nrthmelli} that also
\begin{equation*}
   \frac{d^j}{dz^j} \overline{M(z)}
   = j! \, \gamma({\ol z})^*(H_{\delta,c} - z I_{L^2(\bbR^n)})^{-(j-1)} 
   \overline{\gamma(z)}\in\sS_r\bigl(L^2(\cC)\bigr),\quad r> (n-1)/(2j+1),
 \end{equation*}
for all $z \in \rho(H_{\delta,c})$ and $j \in \bbN$ (cf.\ \cite[Lemma 3.1]{BLL-Exner} for the case $c=0$). Summing up, we see that the derivatives of the 
$\gamma$-field and Weyl function of the quasi boundary triple 
$\{L^2(\cC),\Gamma_0,\Gamma_1\}$ have the same $\sS_p$-properties as the $\gamma$-field and
Weyl function in Step 3 of the proof of Theorem~\ref{nrthmelli} (cf.\ \eqref{ass2w}, \eqref{ass1w}, and \eqref{ass3w}). Now the same arguments as in the proof 
of Theorem~\ref{nrthmelli} show that the conditions \eqref{ddc1}--\eqref{ddc3} are satisfied for all $z \in \rho(H_{\delta,c})\cap\rho(H_{\delta,\alpha})$, 
$p,q \in \bbN_0$, $p+q=2k$ and $k\geq (n-3)/4$.

Hence the assumptions in Theorem~\ref{mainssf2} are satisfied with $S$ in \eqref{ssrr}, the quasi boundary triple 
in \eqref{qbtw1}--\eqref{qbtw2}, and the corresponding $\gamma$-field and Weyl function. It follows that the 
difference of the $2k+1$-th powers of the resolvents of $H_{\delta,c}$ and $H_{\delta,\alpha}$ is a trace class operator and that
for any orthonormal basis $(\varphi_j)_{j \in J}$ in $L^2(\cC)$ the function 
  \begin{equation*}
   \xi_\alpha(\lambda) 
   =\sum_{j \in J} \lim_{\varepsilon\downarrow 0}\pi^{-1} \bigl(\Im\bigl( \log (\cM_\alpha(\lambda+i\varepsilon))\bigr)
   \varphi_j,\varphi_j\bigr)_{L^2(\cC)} \, \text{ for a.e.~$\lambda \in \dR$},
  \end{equation*} 
is a spectral shift function for the pair $\{H_{\delta,c},H_{\delta,\alpha}\}$ such that $\xi_\alpha(\lambda)=0$ for 
$\lambda < \min(\sigma(H_{\delta,c}))\leq\min(\sigma(H_{\delta,\alpha}))$. The above considerations remain valid in the special case $\alpha=0$ which corresponds
to the pair $\{H_{\delta,c},H\}$. Now the assertions in Theorem~\ref{dddthm} follow in the same way as in the proof of Theorem~\ref{nrthmelli} from
the remarks in the end of Section~\ref{ssfsec}.
\end{proof}

In space dimensions $n=2$ and $n=3$ one can choose $k=0$ in Theorem~\ref{dddthm} and obtains a result of the same type as in Corollary~\ref{mainthmcorchen2}.

\begin{corollary}\label{mainthmcorchen3}
Let the assumptions be as in Theorem~\ref{dddthm} and suppose that $n=2$ or $n=3$. 
Then the following assertions $(i)$--$(iii)$ hold: 
\begin{itemize}
  \item [$(i)$]
The difference of the resolvents of $H$ and $H_{\delta,\alpha}$ is 
a trace class operator, that is,
\begin{equation*}
 \big[(H_{\delta,\alpha} - z I_{L^2(\dR^n)})^{-1}-(H - z I_{L^2(\dR^n)})^{-1}\big] \in \sS_1\bigl(L^2(\dR^n)\bigr)
\end{equation*}
holds for all $z\in\rho(H_{\delta,\alpha})=\rho(H)\cap\rho(H_{\delta,\alpha})$. 
  \item [$(ii)$] $\Im(\log (\cM_\alpha(z)))\in\sS_1(L^2(\cC))$ and $\Im(\log (\cM_0(z)))\in\sS_1(L^2(\cC))$ for all $z\in\dC\backslash\dR$, and the limits 
  $$\Im\bigl(\log(\cM_\alpha(\lambda+i 0))\bigr):=\lim_{\varepsilon\downarrow 0}\Im\bigl(\log(\cM_\alpha(\lambda+i\varepsilon))\bigr)$$ 
  and
  $$\Im\bigl(\log(\cM_0(\lambda+i 0))\bigr):=\lim_{\varepsilon\downarrow 0}\Im\bigl(\log(\cM_0(\lambda+i\varepsilon))\bigr)$$ 
  exist for a.e.~$\lambda \in \dR$ in $\sS_1(L^2(\cC))$. 
  \item [$(iii)$] The function
  \begin{equation}
   \xi(\lambda)=\pi^{-1} \tr_{L^2(\cC)}\bigl(\Im\big(\log(\cM_\alpha(\lambda + i0))-\log(\cM_0(\lambda + i0)) \big)\bigr) \, \text{ for a.e.~$\lambda \in \dR$}, 
  \end{equation}
is a spectral shift function for the pair $\{H,H_{\delta,\alpha}\}$ such that $\xi(\lambda)=0$ for 
$\lambda < \min(\sigma(H_{\delta,c}))$ and the trace formula
\begin{equation*}
 \tr_{L^2(\dR^n)}\bigl( (H_{\delta,\alpha} - z I_{L^2(\dR^n)})^{-1} 
 - (H - z I_{L^2(\dR^n)})^{-1}\bigr) 
 = - (2k+1) \int_\dR \frac{\,\xi(\lambda)\, d\lambda}{(\lambda - z)^2}
\end{equation*}
is valid for all $z\in\rho(H_{\delta,\alpha})=\rho(H)\cap\rho(H_{\delta,\alpha})$.
\end{itemize}
\end{corollary}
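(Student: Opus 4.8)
The plan is to reduce Corollary~\ref{mainthmcorchen3} to the case $k=0$ of Theorem~\ref{dddthm} together with Corollary~\ref{mainthmcorchen}, exactly as Corollary~\ref{mainthmcorchen2} follows from Theorem~\ref{nrthmelli} and Corollary~\ref{mainthmcorchen}. First I would observe that for $n=2$ or $n=3$ the inequality $k\geq(n-3)/4$ is satisfied with $k=0$, so Theorem~\ref{dddthm} immediately gives the trace class property of the first power of the resolvent difference, which is assertion $(i)$; it also gives that $\xi(\lambda)=0$ for $\lambda<\min(\sigma(H_{\delta,c}))$ and the corresponding trace formula with $2k+1=1$, which is part of assertion $(iii)$. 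So the only genuinely new content is the upgrade of the convergence in the representation of $\xi$ from the weak/summation form in Theorem~\ref{dddthm} to the $\sS_1$-norm limit statements in $(ii)$ and the trace form of $\xi$ in $(iii)$.

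For that upgrade I would re-examine Step~3 of the proof of Theorem~\ref{dddthm}. There the quasi boundary triple $\{L^2(\cC),\Gamma_0,\Gamma_1\}$ was constructed for the pair $\{H_{\delta,c},H_{\delta,\alpha}\}$ (and, for $\alpha=0$, for $\{H_{\delta,c},H\}$), the sign condition \eqref{sign333} was verified via the form inequality $\mathfrak h_{\delta,c}\leq\mathfrak h_{\delta,\alpha}$, and $M(z_1)$, $M(z_2)^{-1}$ were shown to be bounded. What remains to be checked in order to apply Corollary~\ref{mainthmcorchen} is that $\overline{\gamma(z_0)}\in\sS_2(L^2(\cC),L^2(\dR^n))$ for some $z_0\in\rho(H_{\delta,c})$. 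This follows from the $\sS_p$-estimate already established in Step~3 of the proof of Theorem~\ref{dddthm}, namely $\overline{\gamma(z)}^{(p)}\in\sS_r(L^2(\cC),L^2(\dR^n))$ for $r>(n-1)/[2p+(3/2)]$; taking $p=0$ gives $\overline{\gamma(z_0)}\in\sS_r$ for $r>2(n-1)/3$, and since $n\le 3$ one has $2(n-1)/3\le 4/3<2$, so $\overline{\gamma(z_0)}\in\sS_2(L^2(\cC),L^2(\dR^n))$ as required.

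With the hypotheses of Corollary~\ref{mainthmcorchen} verified for the pairs $\{H_{\delta,c},H_{\delta,\alpha}\}$ and $\{H_{\delta,c},H\}$, its conclusions $(ii)$ and $(iii)$ apply: $\Im(\log(\overline{M(z)}))\in\sS_1(L^2(\cC))$ for all $z\in\dC\setminus\dR$, the limit $\Im(\log(\overline{M(\lambda+i0)}))$ exists for a.e.~$\lambda\in\dR$ in the norm of $\sS_1(L^2(\cC))$, and $\pi^{-1}\tr_{L^2(\cC)}\bigl(\Im(\log(\overline{M(\lambda+i0)}))\bigr)$ is a spectral shift function. Since the Weyl function of the triple is $\overline{M(z)}=\cM_\alpha(z)$ in the case of $\{H_{\delta,c},H_{\delta,\alpha}\}$ and $\overline{M(z)}=\cM_0(z)$ in the case of $\{H_{\delta,c},H\}$ (this identification is precisely \eqref{mddf} together with the notation of Theorem~\ref{dddthm}), this yields assertion $(ii)$ and the fact that $\xi_\alpha(\lambda)=\pi^{-1}\tr_{L^2(\cC)}(\Im(\log(\cM_\alpha(\lambda+i0))))$ and $\xi_0(\lambda)=\pi^{-1}\tr_{L^2(\cC)}(\Im(\log(\cM_0(\lambda+i0))))$ are spectral shift functions for $\{H_{\delta,c},H_{\delta,\alpha}\}$ and $\{H_{\delta,c},H\}$, respectively, each vanishing below $\min(\sigma(H_{\delta,c}))$. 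Finally, the subtraction argument at the end of Section~\ref{ssfsec} (formula \eqref{ssfab}, applied with $C=H_{\delta,c}$, $A=H$, $B=H_{\delta,\alpha}$) gives that $\xi(\lambda)=\xi_\alpha(\lambda)-\xi_0(\lambda)$ is a spectral shift function for $\{H,H_{\delta,\alpha}\}$, and since both $\Im(\log(\cM_\alpha(\lambda+i0)))$ and $\Im(\log(\cM_0(\lambda+i0)))$ lie in $\sS_1(L^2(\cC))$, additivity of the trace converts this into the stated form
\[
\xi(\lambda)=\pi^{-1}\tr_{L^2(\cC)}\bigl(\Im\bigl(\log(\cM_\alpha(\lambda+i0))-\log(\cM_0(\lambda+i0))\bigr)\bigr),
\]
completing $(iii)$; the vanishing of $\xi$ below $\min(\sigma(H_{\delta,c}))$ and the trace formula are inherited from Theorem~\ref{dddthm} with $k=0$.

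I do not expect a serious obstacle here: the corollary is essentially a packaging result, and every ingredient — the $k=0$ instance of Theorem~\ref{dddthm}, the $\sS_2$-membership of $\overline{\gamma(z_0)}$, and Corollary~\ref{mainthmcorchen} — is already available. The one point requiring a little care is making sure the $\sS_1$-limit statements are applied to the \emph{closed} Weyl functions $\cM_\alpha(z)=\overline{M(z)}$ and $\cM_0(z)$ rather than to $M(z)$ itself, and that additivity of $\tr_{L^2(\cC)}$ is legitimately used only after both imaginary parts of the logarithms are known to be trace class — both of which are guaranteed by Corollary~\ref{mainthmcorchen}$(ii)$. The trace formula displayed in $(iii)$ as written carries a factor $-(2k+1)$ with $k=0$, i.e.\ $-1$, consistent with Corollary~\ref{mainthmcorchen}$(iii)$.
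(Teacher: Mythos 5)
Your proposal is correct and matches the paper's (implicit) argument: the paper obtains this corollary precisely by taking $k=0$ in Theorem~\ref{dddthm} and invoking Corollary~\ref{mainthmcorchen} for the two pairs $\{H_{\delta,c},H_{\delta,\alpha}\}$ and $\{H_{\delta,c},H\}$, with the $\sS_2$-membership of $\overline{\gamma(z_0)}$ in dimensions $n=2,3$ supplied by the estimates from Step~3 of the proof of Theorem~\ref{dddthm}, and the final subtraction via \eqref{ssfab}. Your observation that the factor $-(2k+1)$ in the displayed trace formula should read $-1$ is also consistent with Corollary~\ref{mainthmcorchen}.
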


In the special case $\alpha<0$, Theorem~\ref{dddthm} simplifies slightly since in that case the sign condition \eqref{sign333} in Theorem~\ref{mainssf2}
is satisfied by the pair $\{H,H_{\delta,\alpha}\}$. Hence it is not necessary to introduce the operator $H_{\delta,c}$ in \eqref{h-deltac} as a comparison
operator in the proof of Theorem~\ref{dddthm}. Instead, one considers the operators $S$ and $T$ in \eqref{ssrr} and \eqref{ttrr}, and defines the 
boundary maps by
\begin{equation*}
 \Gamma_0 f=-\gamma_N^{\rm i}f_{\rm i}-\gamma_N^{\rm e}f_{\rm e},\quad \dom(\Gamma_0)=\dom(T),
\end{equation*}
and
\begin{equation*}
 \Gamma_1 f=-\gamma_D^{\rm i} f_{\rm i}+\frac{1}{\alpha}(\gamma_N^{\rm i}f_{\rm i}+\gamma_N^{\rm e}f_{\rm e})\bigr),\quad 
 \dom(\Gamma_1)=\dom(T).
\end{equation*} 
In this case the corresponding Weyl function is given by
\begin{equation*}
 M(z)=\cE_{1/2}(z) - \alpha^{-1} I_{L^2(\cC)},\quad z\in\dC\backslash\dR,
\end{equation*}
and hence the next statement follows in the same way as Theorem~\ref{dddthm} from our abstract result Theorem~\ref{mainssf2}. 
We leave it to te reader to formulate a variant of Corollary~\ref{mainthmcorchen3} for the special cases $n=2$ and $n=3$.

\begin{theorem}\label{dddthm2}
Assume Hypothesis~\ref{hypo6}, let $H$ and $H_{\delta,\alpha}$ be the self-adjoint operators in
\eqref{h} and \eqref{h-delta}, respectively, 
let $\cE(z)$ be defined as in \eqref{ee}, and let $\alpha\in C^1(\cC)$ be a real-valued function such that $\alpha(x)<0$ for all $x\in\cC$. 
Then the following assertions $(i)$ and $(ii)$ hold for $k \in \bbN_0$ such that $k\geq (n-3)/4$:
 \begin{itemize}
  \item [$(i)$] The difference of the $2k+1$th-powers of the resolvents of $H$ and $H_{\delta,\alpha}$ is 
a trace class operator, that is,
\begin{equation*}
\big[(H_{\delta,\alpha} - z I_{L^2(\bbR^n)})^{-(2k+1)} 
- (H - z I_{L^2(\bbR^n)})^{-(2k+1)}\big] \in \sS_1\bigl(L^2(\dR^n)\bigr)
\end{equation*}
holds for all $z\in\rho(H_{\delta,\alpha})=\rho(H)\cap\rho(H_{\delta,\alpha})$. 
 \item [$(ii)$] For any orthonormal basis $(\varphi_j)_{j \in J}$ in $L^2(\cC)$ the function 
  \begin{equation*}
   \xi(\lambda) 
   =\sum_{j \in J} \lim_{\varepsilon\downarrow 0}\pi^{-1} \bigl(\Im\bigl( \log (\cE(t+i\varepsilon) - \alpha^{-1} I_{L^2(\cC)}) \bigr)
   \varphi_j,\varphi_j\bigr)_{L^2(\cC)} 
  \end{equation*}
  for a.e. $\lambda\in\dR$ 
is a spectral shift function for the pair $\{H,H_{\delta,\alpha}\}$ such that $\xi(\lambda)=0$ for 
$\lambda < 0$ and the trace formula
\begin{align*}
& \tr_{L^2(\bbR^n)}\bigl( (H_{\delta,\alpha} - z I_{L^2(\bbR^n)})^{-(2k+1)} 
 - (H - z I_{L^2(\bbR^n)})^{-(2k+1)}\bigr)    \\ 
 & \quad = - (2k+1) \int_\dR \frac{\xi(\lambda)\, d\lambda}{(\lambda - z)^{2k+2}} 
 \end{align*}
is valid for all $z\in\dC\backslash [0,\infty)$.
\end{itemize}
\end{theorem}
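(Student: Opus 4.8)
The plan is to follow the same three-step structure as in the proof of Theorem~\ref{dddthm}, but with the simpler quasi boundary triple that exploits the sign condition $\alpha(x) < 0$ directly, so that no comparison operator $H_{\delta,c}$ is needed. First I would establish that the triple $\{L^2(\cC),\Gamma_0,\Gamma_1\}$ with
\begin{equation*}
 \Gamma_0 f = -\gamma_N^{\rm i} f_{\rm i} - \gamma_N^{\rm e} f_{\rm e}, \qquad
 \Gamma_1 f = -\gamma_D^{\rm i} f_{\rm i} + \alpha^{-1}(\gamma_N^{\rm i} f_{\rm i} + \gamma_N^{\rm e} f_{\rm e}),
\end{equation*}
defined on $\dom(T)$ with $T = -\Delta$ on $\{f_{\rm i}\oplus f_{\rm e} \in H^2(\Omega_{\rm i}) \times H^2(\Omega_{\rm e}) \mid \gamma_D^{\rm i} f_{\rm i} = \gamma_D^{\rm e} f_{\rm e}\}$, is a quasi boundary triple for $S^* = \overline T$ with $S = H \cap H_{\delta,\alpha}$ as in \eqref{ssrr}. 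This is checked via Theorem~\ref{ratemal}: Green's identity in the form $(\Gamma_1 f,\Gamma_0 g) - (\Gamma_0 f,\Gamma_1 g) = (Tf,g) - (f,Tg)$ follows from the two Green identities on $\Omega_{\rm i}$ and $\Omega_{\rm e}$ and the matching condition, exactly as in Step~1 of the proof of Theorem~\ref{dddthm}; the density of the range of $(\Gamma_0,\Gamma_1)^\top$ follows from surjectivity of $(\gamma_D^{\rm i}, \gamma_N^{\rm i}+\gamma_N^{\rm e})^\top$ onto $H^{3/2}(\cC)\times H^{1/2}(\cC)$ composed with an isomorphism; and $H = T\upharpoonright\ker(\Gamma_0)$ (the condition $\gamma_N^{\rm i} f_{\rm i} + \gamma_N^{\rm e} f_{\rm e} = 0$ together with $\gamma_D^{\rm i} f_{\rm i} = \gamma_D^{\rm e} f_{\rm e}$ forces $f \in H^2(\dR^n)$), $H_{\delta,\alpha} = T\upharpoonright\ker(\Gamma_1)$, which handles condition~(iii).

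Next I would compute the Weyl function. Using $\cE_{1/2}(z)(\gamma_N^{\rm i} f_{{\rm i},z} + \gamma_N^{\rm e} f_{{\rm e},z}) = \gamma_D^{\rm i} f_{{\rm i},z}$ for $f_z \in \ker(T - zI)$ (this is \eqref{useit4}, valid since $\cE(z) = (\cD_{\rm i}(z) + \cD_{\rm e}(z))^{-1}$ is well-defined on all of $\dC\setminus[0,\infty)$ by Step~2 of the proof of Theorem~\ref{dddthm}), one gets $\Gamma_0 f_z = \gamma_N^{\rm i} f_{{\rm i},z} + \gamma_N^{\rm e} f_{{\rm e},z}$ (up to sign) and $\Gamma_1 f_z = (-\cE_{1/2}(z) + \alpha^{-1}I_{L^2(\cC)})(\gamma_N^{\rm i} f_{{\rm i},z} + \gamma_N^{\rm e} f_{{\rm e},z})$, so $M(z) = \cE_{1/2}(z) - \alpha^{-1} I_{L^2(\cC)}$ on $\dom(M(z)) = H^{1/2}(\cC)$ after fixing signs consistently. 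Boundedness of $M(z)$ is immediate because $\cE(z)$ is compact on $L^2(\cC)$ with range $H^1(\cC)$ (cited from \cite[Proposition 3.2(iii)]{BLL13-AHP}), and boundedness of $M(z)^{-1}$ follows because $-M^{-1}$ is the Weyl function of the triple $\{L^2(\cC),\Gamma_1,-\Gamma_0\}$ for which $\ker(\Gamma_1) = \dom(H_{\delta,\alpha})$ is self-adjoint — alternatively one checks $\ker(\overline{M(z)}) = \{0\}$ directly as in Step~2 of the proof of Theorem~\ref{dddthm}. The sign condition \eqref{sign333} for the pair $\{H, H_{\delta,\alpha}\}$ with $\zeta_0 < \min(\sigma(H_{\delta,\alpha})) \leq 0$ follows from $\mathfrak h_{\delta,\alpha}[f,g] = (\nabla f,\nabla g) - \int_\cC \alpha |f|^2 \, d\sigma \geq (\nabla f,\nabla g)$ when $\alpha < 0$, i.e.\ $\mathfrak h \leq \mathfrak h_{\delta,\alpha}$, via the standard resolvent monotonicity (\cite[Chapter VI]{K76}).

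The third step is verifying the $\sS_p$-conditions \eqref{ass1}--\eqref{ass3} of Theorem~\ref{mainssf2}. Here the key identity is $\gamma(\bar z)^* = \Gamma_1(H - \bar z I)^{-1}$; using the boundary condition $\gamma_N^{\rm i} g_{\rm i} + \gamma_N^{\rm e} g_{\rm e} = 0$ satisfied by $g = (H - \bar z I)^{-1} f \in \dom(H)$ (now with the $\Gamma_1$ above rather than the Robin one), one finds $\gamma(\bar z)^* = -\gamma_D^{\rm i}(H - \bar z I)^{-1}$. The smoothing property of the free resolvent $H = -\Delta$ near $\cC$ (it maps $H^k$ into $H^{k+2}$ locally, and $H^2(\dR^n) \subset H^2(\Omega_{\rm i})\times H^2(\Omega_{\rm e})$), together with the trace smoothness \eqref{gamss} and Lemma~\ref{usel}, then gives $(\gamma(\bar z)^*)^{(q)} \in \sS_r$ for $r > (n-1)/[2q + 3/2]$, hence $\overline{\gamma(z)}^{(p)} \in \sS_r$ for $r > (n-1)/[2p+3/2]$ by \eqref{gammad1}, and $\frac{d^j}{dz^j}\overline{M(z)} \in \sS_r$ for $r > (n-1)/(2j+1)$ by \eqref{gammad3}; the same then holds for derivatives of $\overline{M(z)}^{-1}$ by the product rule and induction. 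Exactly as in Step~3 of the proof of Theorem~\ref{nrthmelli}, combining these via the Hölder-type property $PQ \in \sS_w$ when $P \in \sS_x$, $Q \in \sS_y$, $w^{-1} = x^{-1} + y^{-1}$ yields \eqref{ass1}--\eqref{ass2} for $r > (n-1)/(4k+3)$, which is $< 1$ precisely when $k \geq (n-3)/4$, and \eqref{ass3} from $\frac{2k+1}{j} \geq \frac{n-1}{2j} > \frac{n-1}{2j+1}$. Then Theorem~\ref{mainssf2} delivers (i) and the representation $\xi(\lambda) = \sum_{j\in J} \lim_{\varepsilon\downarrow 0}\pi^{-1}(\Im(\log(\overline{M(\lambda+i\varepsilon)}))\varphi_j,\varphi_j)_{L^2(\cC)}$ with $\overline{M(z)} = \cE(z) - \alpha^{-1}I_{L^2(\cC)}$ and $\xi(\lambda) = 0$ near $\zeta_0$, hence for $\lambda < 0$ since $\zeta_0$ can be taken arbitrarily negative. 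I expect the main obstacle to be the bookkeeping in Step~2 — getting the signs in $\Gamma_0, \Gamma_1$ consistent so that $M(z) = \cE_{1/2}(z) - \alpha^{-1}I$ comes out with the correct sign and is genuinely a Nevanlinna function with $\Im(\overline{M(z)}) \geq 0$ on $\dC_+$ — and secondarily confirming that the well-definedness and invertibility arguments for $\cE(z)$ from the proof of Theorem~\ref{dddthm} transfer verbatim (they do, since they never used the operator $H_{\delta,c}$, only $H$ and $H_{\delta,\alpha}$).
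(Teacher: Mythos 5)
Your proposal is correct and follows essentially the same route as the paper: the paper proves Theorem~\ref{dddthm2} precisely by introducing the quasi boundary triple with $\Gamma_0 f=-\gamma_N^{\rm i}f_{\rm i}-\gamma_N^{\rm e}f_{\rm e}$, $\Gamma_1 f=-\gamma_D^{\rm i}f_{\rm i}+\alpha^{-1}(\gamma_N^{\rm i}f_{\rm i}+\gamma_N^{\rm e}f_{\rm e})$, whose Weyl function is $M(z)=\cE_{1/2}(z)-\alpha^{-1}I_{L^2(\cC)}$, noting that the sign condition \eqref{sign333} holds directly for $\{H,H_{\delta,\alpha}\}$ when $\alpha<0$, and then repeating the verification of the hypotheses of Theorem~\ref{mainssf2} exactly as in the proofs of Theorems~\ref{dddthm} and \ref{nrthmelli}. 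Your filled-in details (Green's identity, identification of $\ker\Gamma_0$ and $\ker\Gamma_1$, $\gamma(\ol z)^*=-\gamma_D^{\rm i}(H-zI_{L^2(\dR^n)})^{-1}$, the $\sS_p$-estimates, and taking $\zeta_0<0$ arbitrary to get $\xi(\lambda)=0$ for $\lambda<0$) agree with what the paper leaves to the reader.
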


\vskip 0.8cm
\noindent {\bf Acknowledgments.}  
J. Behrndt is most grateful for the stimulating research stay and the hospitality at the 
Graduate School of Mathematical Sciences of the University of Tokyo from April to July 2016, where parts of this paper were written. 
The authors also wish to thank Hagen Neidhardt for fruitful discussions and helpful remarks.
This work is supported by International Relations and Mobility Programs of the TU Graz and the Austrian Science Fund (FWF), project P 25162-N26.


\end{document}